\documentclass{amsart}
\usepackage{pgfplots}
\usepackage{amsmath}
\pgfplotsset{compat=1.18}
\usepackage{sansmath} % optional: math font consistent with sans serif text
\sansmath
\usepackage{graphicx}
\usepackage{slashed}
\usepackage{enumerate}
\usepackage{dsfont}
\usepackage{mathtools,cancel}
\usepackage[bottom]{footmisc}
\usepackage[scr=rsfso]{mathalfa}
\usepackage{tikz-cd}
\usepackage[hidelinks]{hyperref}
{

}
\usepackage{amsfonts}
\usepackage{psfrag}
\usepackage{color}
\usepackage{mathtools}
\usepackage{pgf,tikz}
\usepackage{mathrsfs}
\usetikzlibrary{arrows}
\usepackage{fancyhdr}
\usepackage{amssymb}
\usepackage{slashed}
\usepackage{enumitem}

\usepackage[T1]{fontenc}
\usepackage[utf8]{inputenc}
\newtheorem{remark}{Remark}

\newtheorem{proposition}{Proposition}[section]

\newtheorem{theorem}{Theorem}[section]
\newtheorem{corollary}{Corollary}[section]
\newtheorem{lemma}{Lemma}[section]

\newcommand{\be}{\begin{equation}}
\newcommand{\ee}{\end{equation}}

\newcommand{\bm}{\begin{align}*}
\newcommand{\enm}{\end{align}*}

\newcommand{\bespeq}{\begin{equation}\begin{split}}
\newcommand{\espeq}{\end{split}\end{equation}}

\def\S {S_{u,\underline{u}}}

\def\p{\psi}

\def\p{\psi}

\newcommand{\tr}{\mbox{tr}}

\newcommand\restri[2]{{% we make the whole thing an ordinary symbol
		\left.\kern-\nulldelimiterspace % automatically resize the bar with \right
		#1 % the function
		\right|_{#2} % this is the delimiter
}}

\definecolor{ffqqqq}{rgb}{1.,0.,0.}
\definecolor{uuuuuu}{rgb}{0.26666666666666666,0.26666666666666666,0.26666666666666666}

\makeatletter
\def\ps@pprintTitle{%
  \let\@oddhead\@empty
  \let\@evenhead\@empty
  \let\@oddfoot\@empty
  \let\@evenfoot\@oddfoot
}
\def\@author#1{\g@addto@macro\elsauthors{\normalsize%
    \def\baselinestretch{1}%
    \upshape\authorsep#1\unskip\textsuperscript{%
      \ifx\@fnmark\@empty\else\unskip\sep\@fnmark\let\sep=,\fi
      \ifx\@corref\@empty\else\unskip\sep\@corref\let\sep=,\fi
      }%
    \def\authorsep{\unskip,\space}%
    \global\let\@fnmark\@empty
    \global\let\@corref\@empty  %% Added
    \global\let\sep\@empty}%
    \@eadauthor={#1}
}
\makeatother
\begin{document}

\title{Volume Stability for Hyperbolic Manifolds and Applications to General Relativity}

\author{Puskar Mondal}\footnote{e-mail:pushkarmondal@gmail.com}%Beijing Institute of Mathematical Sciences and Applications, Yau Mathematical Sciences Center}
\author{Shing-Tung Yau} %\footnote{Beijing Institute of Mathematical Sciences and Applications, Yau Mathematical Sciences Center}

\maketitle

\begin{abstract}
We prove a sharp volume-stability theorem for closed hyperbolic three-manifolds. Let \((M,h)\) be closed hyperbolic with \(\operatorname{Ric}_h=-2h\), and let \(g_i\) be smooth metrics on \(M\) satisfying
$R(g_i)\geq -6,
\operatorname{Vol}_{g_i}(M)\longrightarrow \operatorname{Vol}_h(M)$.
After passing to a subsequence, there exist \(Z_i\subset M\), smooth domains \(K_i\subset M\), and diffeomorphisms
$\psi_i:K_i\longrightarrow M\setminus Z_i
$
such that
$\operatorname{Vol}_{g_i}(Z_i)\longrightarrow0,
\operatorname{Vol}_h(M\setminus K_i)\longrightarrow0,
$
and
$
\|\psi_i^*g_i-h\|_{C^0(K_i,h)}\longrightarrow0.
$
Thus near-equality in the sharp hyperbolic volume bound forces tensorial \(C^0\)-convergence to the hyperbolic metric outside regions of vanishing volume. As an application, we establish stability of the Fischer--Moncrief reduced Hamiltonian at the Lorentz-cone ground state: after CMC normalization, near-minimizing compact vacuum data in the hyperbolic topological class converge, modulo sets of vanishing volume, to the hyperbolic Lorentz-cone geometry in tensorial \(C^0\). This provides a rigorous volume-dominance formulation of the Fischer--Moncrief asymptotic picture.
\end{abstract}

\setcounter{tocdepth}{2}
{\hypersetup{linkcolor=black}
\small
\tableofcontents
}

\section{Introduction and the Main Result}
\noindent Recall the hyperbolic volume rigidity problem posed by Schoen. 
Let $(M^n,h)$ be a closed hyperbolic manifold, $n\geq 3$, normalized so that
\[
        \sec_h \equiv -1,
        \qquad
        R_h=-n(n-1).
\]
Let $g$ be another smooth Riemannian metric on $M$ satisfying
\[
        R_g \geq -n(n-1).
\]
Then
\[
        \operatorname{Vol}_g(M)\geq \operatorname{Vol}_h(M).
\]
Moreover, equality holds if and only if $g$ is hyperbolic, equivalently there exists a
diffeomorphism $\Phi:M\to M$ such that
\[
        g=\Phi^{*}h .
\]
Schoen’s conjecture is known to hold for 3-manifolds due to works of Hamilton on non-singular Ricci flow \cite{hamilton1999non} and Perelman on geometrization of 3-manifolds \cite{perelman2002entropy,perelman2003ricci} (also see
\cite{agol2007lower} for a generalization). For higher dimensions, Besson, Courtois and Gallot verified it for
metrics $C^{2}-$close to the canonical metric \cite{besson1991volume}. They also proved that the volume comparison
holds without assuming $g$ is close to $\overline{g}$, if one replace the assumption on scalar curvature by
Ricci curvature \cite{besson1995entropies}, which can be viewed as evidence that Schoen’s conjecture holds for
higher dimensions. In this article, we study a slight variant of the problem, namely quantification of the rigidity property in the regime of small data defined by the smallness of the volume deficit. More precisely we ask the following question: \textit{Let $(M,g_{i})$ be a sequence of smooth closed Riemannian 3 manifolds with scalar curvature bounds $R(g_{i})\geq -6$ and $\operatorname{Vol}(M,g_{i})-\operatorname{Vol}(M,h)\leq \delta_{i}\to 0$. Then in which topology can there be a convergence of $(M,g_{i})$ to $(M,h)?$} This question is reminiscent of the stability of the positive mass theorem in general relativity \cite{dong2025stability} and in particular, can be considered to be an equivalent in the case of spatially compact spacetimes where a certain weak Lyapunov functional (re-scaled volume of the Cauchy slice) plays the role of ``\textit{mass}". The main point is that the volume is too weak to control any type of strong convergence. As it turns out, outside some bad sets of negligible volume, there is a $C^{0}$ convergence to the hyperbolic manifold (this can be improved under certain additional assumptions). We prove the following theorem by means of the normalized Ricci flow.  

\begin{theorem}[Main Theorem]
\label{main1}
Let \((M,h)\) be a closed hyperbolic three-manifold normalized by
\[
        \operatorname{Ric}_{h}=-2h,
        \qquad
        R(h)=-6 .
\]
Let \(g_i\) be smooth metrics on \(M\) satisfying
\[
        R(g_i)\geq -6,
        \qquad
        0\leq
        \operatorname{Vol}_{g_i}(M)-\operatorname{Vol}_{h}(M)
        \leq \delta_i,
        \qquad
        \delta_i\to0 .
\]
Then, after passing to a subsequence, there exist 
$Z_i\subset M,~
        G_i:=M\setminus Z_i$,
smooth compact domains
$K_i\subset M$,
and maps
 $\Phi_i:G_i\longrightarrow K_i$
which are diffeomorphisms onto their images, such that
$\operatorname{Vol}_{g_i}(Z_i)\to0,
$
and
$ \operatorname{Vol}_{h}(M\setminus K_i)\to0$.
Moreover,
\[
        \sup_{x\in G_i}
        \left|
        g_i-(\Phi_i)^*(h|_{K_i})
        \right|_{(\Phi_i)^*h}(x)
        \to0,
\]
that is, there exist \(\varepsilon_i\to0\) such that
\[
        (1-\varepsilon_i)\Phi_i^*h
        \leq
        g_i
        \leq
        (1+\varepsilon_i)\Phi_i^*h
        \qquad
        \text{on }G_i 
\]
as quadratic forms.
\end{theorem}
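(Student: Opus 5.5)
The plan is to run the normalized Ricci flow from each $g_i$ and exploit the monotonicity of volume along it, following the Hamilton--Perelman proof of Schoen's conjecture in dimension three. For the flow normalized so that hyperbolic metrics with $R=-6$ are fixed points, Perelman's $\bar\lambda$-monotonicity implies that $t\mapsto \operatorname{Vol}_{g_i(t)}(M)$ is nonincreasing whenever $R_{\min}\ge -6$. This also holds through surgeries, where volume only drops. The first step is to use the Hamilton--Perelman long-time analysis. Since $M$ is hyperbolic, the thick part of $g_i(t)$ converges, as $t\to\infty$, to the hyperbolic metric, and volume converges to $\operatorname{Vol}_h(M)$. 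The thin part disappears because $M$ is closed hyperbolic, so there are no incompressible tori. Combining this with $\operatorname{Vol}_{g_i}\le \operatorname{Vol}_h+\delta_i$ gives
\[
\int_0^\infty\!\!\int_M (R+6)\,d\mu_{g_i(t)}\,dt \;+\; \text{(surgery volume losses)} \;\le\; \delta_i ,
\]
and, through Perelman's pointwise lower bound $R_{\min}(t)\ge -6 - C/(1+t)$, a similar smallness for the traceless Ricci energy integrated in time. The precise identity would come from evolving $\bar\lambda$ or $\operatorname{Vol}\cdot(-\lambda)^{3/2}$.

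The second step converts this spacetime smallness into closeness at time $0$ on most of $M$. I would write $g_i(t)=g_i(0)-2\int_0^t(\operatorname{Ric}+2g)\,ds$ along flow lines and take $Z_i$ to be the set of points $x$ where $\int_0^T|\operatorname{Ric}+2g|(x,s)\,ds$ exceeds $\varepsilon_i$, or where a surgery region passes near $x$. By Chebyshev and the energy bound, $\operatorname{Vol}_{g_i}(Z_i)\to0$, after controlling the volume distortion with $R\ge -6-o(1)$, which gives $\partial_t d\mu\le (6+o(1))d\mu$ for $d\mu$ on $[0,T]$. On $G_i=M\setminus Z_i$, integrating the flow gives $(1-\varepsilon_i)g_i(T)\le g_i(0)\le(1+\varepsilon_i)g_i(T)$. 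At a large time $T=T_i$, chosen after a diagonal argument, $g_i(T_i)$ is $\varepsilon_i$-close in $C^0$, through a diffeomorphism, to $h$ on its thick part. Here I would use Perelman's hyperbolic thick-part convergence together with Mostow rigidity to identify the limit with $h$. Composing these diffeomorphisms defines $\Phi_i:G_i\to K_i$. Since the comparison is bi-Lipschitz with constants near $1$, volumes are preserved up to $o(1)$, and the estimate $\operatorname{Vol}_h(M\setminus K_i)\to0$ follows from $\operatorname{Vol}_{g_i}(G_i)\to\operatorname{Vol}_h(M)$.

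The main obstacle is uniformity in $i$. The time $T_i$ at which the flow from $g_i$ is $\varepsilon$-close to hyperbolic, as well as the surgery times, are not controlled by the volume deficit alone. Moreover, the pointwise integral $\int_0^{T_i}|\operatorname{Ric}+2g|\,ds$ is bounded only by an $L^1$-in-spacetime estimate that must be weighted correctly. I would try a monotone quantity of the form $\operatorname{Vol}_{g(t)}+c\int_t^\infty\int|\mathring{\operatorname{Ric}}|^2$, obtained from Perelman's $\lambda$-functional whose derivative includes $\int|\operatorname{Ric}+2g+\nabla^2 f|^2$. I would then use $R\ge-6$ at time $0$ together with the maximum principle to absorb the $\nabla^2 f$ term, and Cauchy--Schwarz in time with weight $(1+s)^{-1-\eta}$ to pass from $L^2$ to the $L^1$ control needed on flow lines. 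The surgery regions are handled by their vanishing volume, which is bounded by the total volume drop $\le\delta_i$.
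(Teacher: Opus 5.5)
Your Step 1 and the final bookkeeping match the paper: the scalar budget $\int_0^\infty\!\int(R+6)\le\delta_i$ with the surgery volume loss, a late good slice whose thick part is identified with $h$ by Mostow rigidity, Chebyshev along worldlines, and a bi-Lipschitz comparison pulled back to $t=0$. You also correctly locate the crux, which is the accumulated distortion over $[0,T_i]$ with $T_i\to\infty$. However, the mechanism you propose for it fails.

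First, the volume deficit gives no smallness of $\int\!\int|\operatorname{Ric}+2g|^2$. Integrating $(\partial_t-\Delta+4)Q=2|E|^2$ against $d\mu$ gives $2\int_0^T\!\int|E|^2=\bigl[\int Q\,d\mu\bigr]_0^T+4\int_0^T\!\int Q+\int_0^T\!\int Q^2$, and the last term is uncontrolled. The only $L^2$ dissipation available is the entropy one, $\int_0^\infty\mathcal N(t)^2\,dt\lesssim\bar\lambda[h]-\bar\lambda[g_i]\le C_h\delta_i$. It controls the soliton defect $\mathcal S=\operatorname{Ric}+\nabla^2f-\frac{\lambda}{3}g$, not $\operatorname{Ric}+2g$. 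Along Ricci-flow worldlines the deformation is $\mathcal S-\nabla^2 f+\frac{\lambda+6}{3}g$. No maximum-principle argument makes $\int_0^{T_i}|\nabla^2 f|\,dt$ small, because it is a gauge term. The paper removes it by changing the identification of points: pulling back by the flow of $-\nabla f$ gives $\partial_t\widehat g=-2\varphi_t^*\bigl(\mathcal S+\frac{\lambda+6}{3}g\bigr)$ with $0\le\lambda+6\le V^{-1}\int(R+6)\,d\mu$. Both the worldline comparison and the final maps to $h$ are built from this modified flow.

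Second, your time-weighted Cauchy--Schwarz needs $\int_0^\infty(1+s)^{1+\eta}\|\mathcal S(s)\|_{L^2}^2\,ds\to0$. That is a decay rate for the entropy deficit $\mathcal D(t)$, uniform in $i$. Monotonicity gives only the unweighted bound, so $\int_0^{T_i}\|\mathcal S\|_{L^2}\,dt$ is bounded only by $(C_h\delta_iT_i)^{1/2}$, which need not tend to zero. The missing idea is a \L{}ojasiewicz--Simon inequality for $\bar\lambda$ at the hyperbolic orbit, $\mathcal D^{1-\theta}\le C_{\mathrm{LS}}\mathcal N$ on a neighborhood $\mathcal U$. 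Combined with $-\mathcal D'\ge2\mathcal N^2$, it gives $-\frac{d}{dt}\mathcal D^\theta\gtrsim\mathcal N$, hence $\int\mathcal N\,dt\lesssim\mathcal D(0)^\theta$ independently of the length of the interval. Outside $\mathcal U$ you need a uniform gap $\mathcal N\ge\gamma$, which follows from compactness and the fact that the only zeros of $\mathcal N$ are hyperbolic metrics. Then the time spent outside $\mathcal U$ is at most $\gamma^{-2}\int\mathcal N^2$, and its contribution to $\int\mathcal N\,dt$ is at most $\gamma^{-1}\int\mathcal N^2$.

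Finally, your volume-distortion step has the wrong sign for this normalization. Here $\partial_t d\mu=-(R+6)\,d\mu\le0$, so what Chebyshev needs is a lower bound on the Jacobian; a factor like $e^{(6+o(1))T_i}$ would be useless in any case. The paper gets $J\ge e^{-\kappa_i}$ off a set of $g_i$-volume at most $\delta_i/(1-e^{-\kappa_i})$, by Chebyshev on $\int_0^{t_i}(R+6)$ along worldlines. In the modified gauge it uses instead $|\log J|\le\sqrt3\,\eta_i$ up to the stopping time.
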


\begin{remark}
Note we do not have an estimate on the $\mathcal{H}^{2}$ estimate of the boundary $\partial Z_{i}$ of the bad set $Z_{i}$, and the $C^{0}$ convergence is a requirement for the technique used (see remark 2). 
\end{remark}

\subsection{Main ideas of the proof and the principal difficulties}
\medskip
\noindent
We now describe the main mechanism of the proof. The fundamental idea is to use the Ricci flow with Surgery introduced by Hamilton \cite{hamilton1982three} and later by Perelman \cite{perelman2002entropy,perelman2003ricci} (note the expositions \cite{cao2006complete,kleiner2008notes}). The main result in this context is the global well-posedness result of Ricci flow with surgery. We explain precisely where long-time Ricci-flow convergence enters. We first fix the Ricci-flow continuation used in the argument.  For each
initial metric \(g_i\), let \(\mathcal G_i(t)\), \(t\geq0\), be a
three-dimensional normalized Ricci flow with surgery in the sense of
Hamilton-Perelman, starting from \(g_i\).  Thus, on every smooth time interval,
\[
        \partial_t \mathcal G_i
        =
        -2(\operatorname{Ric}_{\mathcal G_i}+2\mathcal G_i),
\]
and the flow is continued through surgery times by the standard
precise surgery procedure.  We use the following properties of
this flow:

\[
        R(\mathcal G_i(t))+6\geq0
\]
is preserved on every post-surgery time slice,

\[
        \frac{d}{dt}\operatorname{Vol}(\mathcal G_i(t))
        =
        -\int_{M} (R(\mathcal G_i(t))+6)\,d\mu_{\mathcal G_i(t)}
\]
on smooth time intervals, and therefore volume is nonincreasing across surgery times. The
hyperbolic volume comparison theorem together with the scalar curvature bound gives only the lower barrier
\[
        \operatorname{Vol}_{g_i(t)}(M)\geq V_h .
\]
It does not by itself imply that \(g_i(t)\) is close to \(h\) in any strong sense since volume is too weak to control any strong convergence.  Ricci flow is
used to turn near equality in volume into geometric convergence. To understand how Ricci flow can be utilized in this context, we first observe the \textbf{rigid} mechanism.  If the volume equality holds with scalar curvature bound, then
\[
        \operatorname{Vol}_{g(0)}(M)=V_h,
        \qquad
        R(g(0))\geq -6,
\]
then along the normalized flow
\[
        \partial_t g=-2(\operatorname{Ric}+2g),~g(t=0)=g(0)
\]
we have
\[
        V_h\leq \operatorname{Vol}_{g(t)}(M)\leq \operatorname{Vol}_{g(0)}(M)=V_h,
\]
since the scalar curvature lower bound is preserved by the chosen normalized Ricci flow and by Schoen's volume comparison result.
Hence \(\operatorname{Vol}_{g(t)}(M)\equiv V_h\).  Since for $t\in [0,T]$ in every regular time interval
\[
        \frac{d}{dt}\operatorname{Vol}_{g(t)}(M)
        =
        -\int_M (R(g(t))+6)\,d\mu_{g(t)},
\]
since $R(g(t))+6\geq 0$ along the Ricci flow, we get
\[
        R(g(t))+6\equiv0.
\]
But the following sourced heat type equations is verified by $R(g(t))+6$
\[
        (\partial_t-\Delta+4)(R(g(t))+6)
        =
        2|\operatorname{Ric}+2g|^2.
\]
Therefore
\[
        \operatorname{Ric}_{g(t)}+2g(t)\equiv0.
\]
Thus \(g(t)\) is hyperbolic. Since \(M\) is closed hyperbolic, Mostow
rigidity identifies \(g(t)\) with \(h\), up to diffeomorphism.

\noindent In the stability case, let \(g_i(t)\) denote the distinguished
hyperbolic component of the normalized Ricci flow with sufficiently
precise surgery. On every regular time interval,
\[
Q_i:=R(g_i(t))+6
\]
satisfies
\[
(\partial_t-\Delta_{g_i(t)}+4)Q_i
=
2\bigl|\operatorname{Ric}_{g_i(t)}+2g_i(t)\bigr|^2
\geq0.
\]
The standard surgery construction preserves \(Q_i\geq0\) across surgery
times. Hence, by hyperbolic volume comparison,
\[
V_h\leq \operatorname{Vol}_{g_i(t)}(M)\leq
\operatorname{Vol}_{g_i(0)}(M)\leq V_h+\delta_i.
\]
Moreover, if \(\Delta_i^{\mathrm{surg}}(s)\geq0\) denotes the volume
lost at a surgery time \(s\), then
\[
\operatorname{Vol}_{g_i(0)}(M)-\operatorname{Vol}_{g_i(T)}(M)
=
\int_0^T\int_M Q_i\,d\mu_{g_i(t)}\,dt
+
\sum_{0<s<T}\Delta_i^{\mathrm{surg}}(s).
\]
Consequently,
\begin{equation}
\label{eq:Qspacetime}
\int_0^\infty\int_M
\bigl(R(g_i(t))+6\bigr)\,d\mu_{g_i(t)}\,dt
\leq \delta_i,
\end{equation}
and the total surgery volume loss is bounded by the same deficit.

\noindent Similarly, let
\[
\overline\lambda[g]
:=
\lambda[g]\operatorname{Vol}_{g}(M)^{2/3},
\qquad
\mathcal S_i
:=
\operatorname{Ric}_{g_i}
+\nabla^2 f_i-\frac{\lambda[g_i]}{3}g_i,
\]
where \(f_i\) is the normalized minimizer for Perelman's
\(\lambda\)-functional. On every regular interval,
\[
\frac{d}{dt}\overline\lambda[g_i(t)]
\geq
2\operatorname{Vol}_{g_i(t)}(M)^{2/3}
\int_M|\mathcal S_i(t)|^2e^{-f_i(t)}\,d\mu_{g_i(t)}.
\]
Choosing the surgery parameters so that the total entropy loss at
surgery is absorbed into \(C\delta_i\), and using
\[
0\leq \overline\lambda[h]-\overline\lambda[g_i(0)]
\leq C_h\delta_i,
\]
we obtain
\begin{equation}
\label{eq:spacetime}
\int_0^\infty\int_M
|\mathcal S_i(t)|^2e^{-f_i(t)}\,d\mu_{g_i(t)}\,dt
\leq C_h\delta_i.
\end{equation}
Here the spacetime integrals are taken over the regular portions of the
surgical flow.
Thus, after choosing a sequence of large times \(A_i\to\infty\), there exist regular times
$ t_i\in[A_i,2A_i]$
such that
\[
        \int_M
        \bigl(R(g_i(t_i))+6\bigr)\,d\mu_{g_i(t_i)}
        \to0,
\]
and
\[
        \int_M
        |\mathcal S_i(t_i)|^2 e^{-f_i(t_i)}\,d\mu_{g_i(t_i)}
        \to0.
\]
Set
\[
        \widehat g_i:=g_i(t_i).
\]
Now the long-time theorem is used.  Perelman's long-time geometrization says
that the normalized metrics \(\widehat g_i\) have a thick-thin
decomposition.  On the thick part, after passing to a subsequence, one has
smooth pointed Cheeger--Gromov convergence to finite-volume hyperbolic
pieces.  The thin part is collapsed with sectional curvature bounded from
below.

\noindent Because the underlying manifold is the fixed closed hyperbolic manifold
\(M\), its geometrization has a single hyperbolic piece, namely \(M\)
itself.  Hence the thick hyperbolic limit has volume
\[
        V_h=\operatorname{Vol}_h(M).
\]
By Mostow rigidity, this limiting hyperbolic metric is isometric to \(h\).
Therefore, for every compact exhaustion
\[
        K^j\Subset (M,h),
        \qquad
        K^j\nearrow M,
\]
there exist diffeomorphisms onto their images
\[
        \psi_i^j:K^j\to U_i^j\subset M
\]
such that
\[
        (\psi_i^j)^*\widehat g_i\to h
        \quad\text{smoothly on }K^j.
\]

\noindent The bad-volume estimate is then purely bookkeeping.  Choose \(j=j(i)\to
\infty\) slowly and set
\[
        K_i^t:=K^{j(i)},
        \qquad
        G_i^t:=U_i^{j(i)},
        \qquad
        Z_i^t:=M\setminus G_i^t .
\]
Then
\[
        \|(\psi_i^t)^*\widehat g_i-h\|_{C^0(K_i^t,h)}\to0.
\]
Moreover,
\[
\begin{aligned}
        \operatorname{Vol}_{\widehat g_i}(Z_i^t)
        &=
        |\operatorname{Vol}_{\widehat g_i}(M)
        -
        \operatorname{Vol}_{\widehat g_i}(G_i^t)|         =|\operatorname{Vol}_{\widehat g_i}(M)-\operatorname{Vol}_h(M)+\operatorname{Vol}_h(K_i^t)-\operatorname{Vol}_{\widehat g_i}(G_i^t)+\operatorname{Vol}_h(M)-\operatorname{Vol}_h(K_i^t)|\\
        &\leq
        \left|
        \operatorname{Vol}_{\widehat g_i}(M)-\operatorname{Vol}_h(M)
        \right|                                          +
        \left|
        \operatorname{Vol}_{\widehat g_i}(G_i^t)
        -
        \operatorname{Vol}_h(K_i^t)
        \right|                                          +
        \operatorname{Vol}_h(M\setminus K_i^t).
\end{aligned}
\]
The first term tends to zero by the volume comparison and monotonicity.  The
second tends to zero by smooth convergence on \(K_i^t\).  The third tends to
zero by the exhaustion.  Hence
\[
        \operatorname{Vol}_{\widehat g_i}(Z_i^t)\to0.
\]

\noindent The remaining difficulty is to transfer this information back to the initial
slice.  This is the genuinely delicate part of the argument.  The normalized
Ricci flow deforms the metric by
\[
        \partial_t g_i(t)=-2E_i(t),
\]
so metric comparison along a worldline requires control of the accumulated
quantity
\[
        \mathcal A_i(x)
        :=
        \int_0^{t_i}
        |E_i(s)|_{g_i(s)}(\Psi_i(s,x))\,ds,
\]
where \(\Psi_i(s,x)\) denotes the surviving worldline starting from \(x\) at time \(0\).
The spacetime estimate \eqref{eq:spacetime} alone gives only
\(L^2_tL^2_x\)-control of \(E_i\), which is insufficient when the comparison
time \(t_i\) is large.  Indeed, a naive Cauchy--Schwarz estimate produces a
factor of \(t_i\), and therefore does not rule out large accumulated metric
distortion.

\noindent The correct replacement is an \(L^1_tL^2_x\)-path-length estimate:
\begin{equation}
\label{eq:L1L2-pathlength}
        \mathcal L_i
        :=
        \int_0^{t_i}
        \|E_i(t)\|_{L^2(\mathcal G_i(t),g_i(t))}\,dt
        \longrightarrow0
\end{equation}
on the relevant good spacetime tube \(\mathcal G_i\).  This estimate is proved
using the gradient-flow structure of the normalized Ricci flow, the entropy
monotonicity, and a Lojasiewicz inequality near the hyperbolic metric. 

\noindent We also need to control the change of volume measure along the same worldlines.
Let \(\mathcal G_i^0\subset M\) be the initial trace of a regular spacetime tube
saturated by worldlines, and write
$\Psi_i(t):\mathcal G_i^0\to\mathcal G_i(t).$
If
\[
        d\mu_{g_i(t)}(\Psi_i(t,x))
        =
        J_i(t,x)\,d\mu_{g_i(0)}(x),
\]
then
\[
        \frac{d}{dt}\log J_i(t,x)
        =
        -Q_i(t,\Psi_i(t,x)).
\]
Thus
\[
        J_i(t,x)
        =
        \exp\left(
        -\int_0^t Q_i(s,\Psi_i(s,x))\,ds
        \right).
\]
For \(\kappa_i\to0\) with \(\delta_i/\kappa_i\to0\), define
\[
        \mathcal B_i^Q(0;\kappa_i)
        :=
        \left\{
        x\in\mathcal G_i^0:
        \int_0^{t_i} Q_i(s,\Psi_i(s,x))\,ds>\kappa_i
        \right\}.
\]
Using \eqref{eq:Qspacetime}, one obtains
\[
        \operatorname{Vol}_{g_i(0)}
        \bigl(\mathcal B_i^Q(0;\kappa_i)\bigr)
        \leq
        \frac{\delta_i}{1-e^{-\kappa_i}}
        \to0 .
\]
On the complement, the Jacobian satisfies
\begin{equation}
\label{eq:jacobian-good}
        e^{-\kappa_i}\leq J_i(t,x)\leq1 .
\end{equation}

\noindent Combining \eqref{eq:L1L2-pathlength} with \eqref{eq:jacobian-good} gives
\[
\begin{aligned}
        \|\mathcal A_i\|_{L^2(\mathcal G_i^{0,Q},g_i(0))}
        &\leq
        e^{\kappa_i/2}
        \int_0^{t_i}
        \|E_i(t)\|_{L^2(\mathcal G_i(t),g_i(t))}\,dt  =
        e^{\kappa_i/2}\mathcal L_i ,
\end{aligned}
\]
where
\[
        \mathcal G_i^{0,Q}
        :=
        \mathcal G_i^0\setminus \mathcal B_i^Q(0;\kappa_i).
\]
Now choose \(\eta_i\to0\) with
$\frac{\mathcal L_i^2}{\eta_i^2}\to0$,
and remove the Einstein-distortion bad set
\[
        \mathcal B_i^E(0;\eta_i)
        :=
        \{x\in\mathcal G_i^{0,Q}:\mathcal A_i(x)>\eta_i\}.
\]
Chebyshev's inequality gives
\[
        \operatorname{Vol}_{g_i(0)}
        \bigl(\mathcal B_i^E(0;\eta_i)\bigr)
        \leq
        e^{\kappa_i}\frac{\mathcal L_i^2}{\eta_i^2}
        \to0 .
\]
Therefore, on the final initial good set
\[
        \mathcal G_i^{0,*}
        :=
        \mathcal G_i^{0,Q}\setminus \mathcal B_i^E(0;\eta_i),
\]
the accumulated metric distortion satisfies
\[
        \mathcal A_i(x)\leq\eta_i .
\]
Integrating the evolution equation for the metric along worldlines gives the
bilipschitz comparison
\begin{equation}
\label{eq:metric-comparison-initial-final}
        e^{-2\eta_i}g_i(0)
        \leq
        \Psi_i(t_i)^*g_i(t_i)
        \leq
        e^{2\eta_i}g_i(0)
\end{equation}
as quadratic forms on \(\mathcal G_i^{0,*}\).

\noindent This comparison is the key point of the proof.  It allows the convergence of
the good time slices produced by the Ricci flow to be pulled back to the
original sequence \((M,g_i)\), after removing subsets whose \(g_i\)-volume tends
to zero.  In this way the scalar-curvature lower bound and the sharp volume
deficit yield $C^{0}$ convergence to the hyperbolic metric on the initial manifolds,
up to negligible exceptional sets.
\begin{remark}
Note that while pulling back from the intermediate good slice $(t=t_{i},t_{i}\to\infty)$, we can not control the higher order norm of the deformation tensor $\partial_{t}g$ even on the good part since such estimates are simply not available to us. Therefore, we can only obtain a $C^{0}$ type convergence result on the volume dominating good sets.     
\end{remark}

\begin{figure}[t]
\centering
\begin{tikzpicture}[scale=1.08, >=Latex]

%---------------- Left: reference hyperbolic manifold ----------------%
\draw[thick] (-4.8,0) ellipse (1.45 and 1.05);
\node at (-4.8,1.45) {\((M,h)\)};
\node at (-4.8,0) {\(K_i\)};

% tiny omitted set on reference side
\fill[gray!25]
(-3.75,0.48)
.. controls (-3.55,0.70) and (-3.58,0.88) .. (-3.83,0.82)
.. controls (-3.70,0.68) and (-3.70,0.55) .. (-3.75,0.48)
-- cycle;

\node[scale=0.72] at (-2.85,0.95) {\(\operatorname{Vol}_h(M\setminus K_i)\to0\)};

% arrow
\draw[->, thick] (-2.95,0) -- (-0.35,0);
\node[above] at (-1.65,0.08) {\(\psi_i:K_i\to G_i\)};

\node[align=center, scale=0.84] at (-1.55,-1.05)
{
\(\displaystyle \|\psi_i^*\bar g_i-h\|_{C^0(K_i,h)}\to0\)
};

%---------------- Right: manifold with thin negligible fingers ----------------%
% main body
\draw[thick] (2.1,0) ellipse (1.58 and 1.08);
\node at (2.1,0.02) {\(G_i\)};

% moved upward so it is not hidden by the top finger
\node at (2.1,2.78) {\((M,\bar g_i)\)};

%---------- top thin finger ----------%
\fill[gray!45]
(2.18,1.00)
.. controls (2.20,1.16) and (2.22,1.40) .. (2.24,1.64)
.. controls (2.26,1.92) and (2.34,2.12) .. (2.50,2.22)
.. controls (2.66,2.30) and (2.78,2.18) .. (2.72,2.00)
.. controls (2.66,1.82) and (2.52,1.70) .. (2.40,1.62)
.. controls (2.34,1.48) and (2.30,1.22) .. (2.30,1.00)
-- cycle;
\draw[thick]
(2.18,1.00)
.. controls (2.20,1.16) and (2.22,1.40) .. (2.24,1.64)
.. controls (2.26,1.92) and (2.34,2.12) .. (2.50,2.22)
.. controls (2.66,2.30) and (2.78,2.18) .. (2.72,2.00)
.. controls (2.66,1.82) and (2.52,1.70) .. (2.40,1.62)
.. controls (2.34,1.48) and (2.30,1.22) .. (2.30,1.00);
\draw[dashed, thick] (2.18,1.00) .. controls (2.24,1.06) and (2.28,1.06) .. (2.30,1.00);

%---------- right ultra-thin finger ----------%
\fill[gray!45]
(3.55,0.15)
.. controls (3.80,0.15) and (4.15,0.15) .. (4.48,0.20)
.. controls (4.72,0.24) and (4.95,0.34) .. (5.02,0.50)
.. controls (5.09,0.68) and (4.94,0.80) .. (4.74,0.76)
.. controls (4.52,0.72) and (4.34,0.58) .. (4.22,0.46)
.. controls (4.00,0.43) and (3.78,0.40) .. (3.55,0.42)
-- cycle;
\draw[thick]
(3.55,0.15)
.. controls (3.80,0.15) and (4.15,0.15) .. (4.48,0.20)
.. controls (4.72,0.24) and (4.95,0.34) .. (5.02,0.50)
.. controls (5.09,0.68) and (4.94,0.80) .. (4.74,0.76)
.. controls (4.52,0.72) and (4.34,0.58) .. (4.22,0.46)
.. controls (4.00,0.43) and (3.78,0.40) .. (3.55,0.42);
\draw[dashed, thick] (3.55,0.15) .. controls (3.50,0.24) and (3.50,0.34) .. (3.55,0.42);

%---------- bottom thin finger with tiny bulb ----------%
\fill[gray!45]
(2.28,-1.02)
.. controls (2.34,-1.20) and (2.40,-1.42) .. (2.48,-1.62)
.. controls (2.58,-1.90) and (2.72,-2.02) .. (2.92,-2.02)
.. controls (3.10,-2.02) and (3.22,-1.88) .. (3.16,-1.70)
.. controls (3.10,-1.54) and (2.96,-1.42) .. (2.84,-1.34)
.. controls (2.78,-1.22) and (2.72,-1.08) .. (2.62,-0.92)
-- cycle;
\draw[thick]
(2.28,-1.02)
.. controls (2.34,-1.20) and (2.40,-1.42) .. (2.48,-1.62)
.. controls (2.58,-1.90) and (2.72,-2.02) .. (2.92,-2.02)
.. controls (3.10,-2.02) and (3.22,-1.88) .. (3.16,-1.70)
.. controls (3.10,-1.54) and (2.96,-1.42) .. (2.84,-1.34)
.. controls (2.78,-1.22) and (2.72,-1.08) .. (2.62,-0.92);
\draw[dashed, thick] (2.28,-1.02) .. controls (2.38,-0.93) and (2.50,-0.90) .. (2.62,-0.92);

%---------- left thin spike ----------%
\fill[gray!45]
(0.58,-0.05)
.. controls (0.28,-0.08) and (-0.02,-0.16) .. (-0.30,-0.28)
.. controls (-0.55,-0.38) and (-0.82,-0.46) .. (-1.00,-0.36)
.. controls (-1.12,-0.30) and (-1.10,-0.16) .. (-0.95,-0.12)
.. controls (-0.72,-0.06) and (-0.40,-0.02) .. (-0.10,0.00)
.. controls (0.18,0.02) and (0.40,0.05) .. (0.58,0.10)
-- cycle;
\draw[thick]
(0.58,-0.05)
.. controls (0.28,-0.08) and (-0.02,-0.16) .. (-0.30,-0.28)
.. controls (-0.55,-0.38) and (-0.82,-0.46) .. (-1.00,-0.36)
.. controls (-1.12,-0.30) and (-1.10,-0.16) .. (-0.95,-0.12)
.. controls (-0.72,-0.06) and (-0.40,-0.02) .. (-0.10,0.00)
.. controls (0.18,0.02) and (0.40,0.05) .. (0.58,0.10);
\draw[dashed, thick] (0.58,-0.05) .. controls (0.52,0.00) and (0.52,0.05) .. (0.58,0.10);

% labels
\node[scale=0.8] at (4.95,1.22) {thin fingers};
\node[scale=0.8] at (4.95,0.95) {\(Z_i\)};
\node[scale=0.76] at (4.8,-1.12) {\(\operatorname{Vol}_{\bar g_i}(Z_i)\to0\)};
\node[scale=0.72] at (4.85,-1.42) {no area/perimeter control};

\end{tikzpicture}
\caption{The schematics of the main theorem \ref{main1}.  The exceptional set \(Z_i\) consists of thin
fingers and tiny bulbs, indicating regions with negligible
\(\bar g_i\)-volume.  On the complement \(G_i=M\setminus Z_i\), the maps
\(\psi_i:K_i\to G_i\) identify \(\bar g_i\) with \(h\) in tensorial
\(C^0\)-topology. 
}
\label{fig:thin-fingers-volume-negligible}
\end{figure}

\subsection{Applications to general relativity}

We finally explain the relevance of the preceding volume stability theorem to the spatially compact vacuum Einstein equations.  The analogy is with the
stability problem for the positive mass theorem in the asymptotically flat
setting \cite{dong2025stability}
: in both cases, a sharp scalar invariant controls the geometry only
after one removes a negligible exceptional region (Also note entropy stability result of \cite{song2025entropy}). In the present setting the
role of the ADM mass is played by the Fischer-Moncrief reduced Hamiltonian,
and the model geometry is the Lorentz cone over a closed hyperbolic
three-manifold.

\noindent Let \(M\) be a closed oriented three-manifold of negative Yamabe type, and let
\((\mathbf M,\mathbf g)\) be a globally hyperbolic vacuum spacetime with Cauchy
slices diffeomorphic to \(M\).  On a constant mean curvature hypersurface, the
induced initial data \((g,k)\) satisfy the vacuum constraint equations
\begin{align}
\label{eq:HC}
        R(g)-|k|_{g}^{2}+(\tr_g k)^2 &=0,\\
\label{eq:MC}
        \operatorname{div}_{g}\bigl(k-(\tr_g k)g\bigr)&=0.
\end{align}
Writing
\[
        k=\Sigma+\frac{\tau}{3}g,
        \qquad
        \tau:=\tr_g k,
\]
the momentum constraint implies, in CMC gauge, that
\[
        \operatorname{tr}_g\Sigma=0,
        \qquad
        \operatorname{div}_g\Sigma=0.
\]
Thus \(\Sigma\) is a transverse-traceless tensor with respect to \(g\).

\noindent Fischer and Moncrief introduced the reduced Hamiltonian
\begin{equation}
\label{eq:FM-Hamiltonian}
        \mathcal H(g,k):=(-\tau)^3\operatorname{Vol}_g(M),
        \qquad \tau=\tr_g k<0,
\end{equation}
which is a weak Lyapunov functional for the vacuum Einstein evolution in CMC
gauge.  More precisely, along every non-stationary CMC Einstein flow,
\(\mathcal H\) is monotone non-increasing, and equality occurs only on the
Lorentz-cone solutions.  These are the vacuum spacetimes foliated by homothetic
compact hyperbolic slices.

\noindent We recall the Fischer--Moncrief monotonicity formula for the reduced
Hamiltonian. Let
\[
        \mathbf g
        =
        -N^2dt^2
        +
        g_{ab}(dx^a+X^a dt)(dx^b+X^b dt)
\]
be a vacuum spacetime foliated by compact CMC hypersurfaces
\((M,g(t),k(t))\), and set as usual
\[
        \tau(t):=\operatorname{tr}_{g(t)}k(t)<0,
        \qquad
        k=\Sigma+\frac{\tau}{3}g.
\]
The lapse equation obtained by tracing the evolution equation for
\(k\) is
\begin{equation}
\label{eq:CMC-lapse}
        -\Delta_g N+|k|_g^2N=\dot\tau .
\end{equation}
Since \(M\) is closed, integration of \eqref{eq:CMC-lapse} gives
\begin{equation}
\label{eq:integrated-lapse}
        \dot\tau\,\operatorname{Vol}_g(M)
        =
        \int_M N|k|_g^2\,d\mu_g
        =
        \int_M N
        \left(
        |\Sigma|_g^2+\frac{\tau^2}{3}
        \right)d\mu_g .
\end{equation}

\noindent On the other hand, from
\[
        \partial_t g=-2Nk+\mathcal L_Xg
\]
one obtains
\[
        \frac{d}{dt}\operatorname{Vol}_g(M)
        =
        -\tau\int_M N\,d\mu_g,
\]
since the divergence term arising from the shift integrates to zero.
Therefore, for
\[
        \mathcal H(g,k)
        :=
        (-\tau)^3\operatorname{Vol}_g(M),
\]
we compute
\begin{align}
        \frac{d}{dt}\mathcal H(g,k)
        &=
        -3\tau^2\dot\tau\,\operatorname{Vol}_g(M)
        +
        \tau^4\int_M N\,d\mu_g \notag\\
        &=
        -3\tau^2
        \int_M N
        \left(
        |\Sigma|_g^2+\frac{\tau^2}{3}
        \right)d\mu_g
        +
        \tau^4\int_M N\,d\mu_g \notag\\
        &=
        -3\tau^2
        \int_M N|\Sigma|_g^2\,d\mu_g
        \leq0.
\label{eq:reduced-Hamiltonian-monotonicity}
\end{align}
Thus \(\mathcal H\) is nonincreasing along every expanding CMC vacuum
evolution and is strictly decreasing at every time for which
\(\Sigma\not\equiv0\).

\noindent Moreover, if \(\mathcal H\) is constant on a time interval, then
\(\Sigma\equiv0\) there. Equation \eqref{eq:CMC-lapse} then gives
\[
        N=\frac{3\dot\tau}{\tau^2},
\]
and the Einstein evolution equations imply
\[
        k=\frac{\tau}{3}g,
        \qquad
        \operatorname{Ric}_g=-\frac{2\tau^2}{9}g.
\]
Consequently,
\[
        h:=\frac{\tau^2}{9}g
        \qquad\text{satisfies}\qquad
        \operatorname{Ric}_h=-2h.
\]
After introducing \(T=-3/\tau\), the corresponding spacetime metric is,
up to diffeomorphism,
\[
        \mathbf g=-dT^2+T^2h,
\]
namely the Lorentz cone over \((M,h)\). Hence the reduced Hamiltonian is
constant along a solution curve precisely for the Lorentz-cone
solutions.

\noindent Finally, under the CMC normalization
\[
        \bar g=\frac{\tau^2}{9}g,
\]
one has
\[
        \mathcal H(g,k)
        =
        (-\tau)^3\operatorname{Vol}_g(M)
        =
        27\operatorname{Vol}_{\bar g}(M).
\]
Thus the Fischer--Moncrief monotonicity is equivalently the
monotonicity of the volume of the normalized spatial metric (conformal volume):
\[
        \frac{d}{dt}\operatorname{Vol}_{\bar g}(M)
        =
        -\frac{\tau^2}{9}
        \int_M N|\Sigma|_g^2\,d\mu_g
        \leq0.
\]

\noindent After this standard CMC normalization, the spatial metric and the transverse-
traceless part of \(k\) may be rescaled so that the Hamiltonian constraint takes
the normalized form
\begin{equation}
\label{eq:normalized-HC}
        R(g)+6=|\Sigma|_g^2,
        \qquad
        \operatorname{div}_g\Sigma=0.
\end{equation}
In particular,
\[
        R(g)\geq -6,
\]
with equality precisely when \(\Sigma=0\).  Under the same normalization, the
reduced Hamiltonian is, up to a fixed dimensional constant depending only on
the convention for \(\tau\), the volume of the normalized spatial metric.  Thus
near-minimizers of the Fischer--Moncrief Hamiltonian \cite{fischer2000reduced,fischer2001reduced} are exactly near-minimizers
of the hyperbolic volume among metrics satisfying the sharp lower scalar
curvature bound.

\noindent For a closed hyperbolic three-manifold \((M,h)\), normalized by
\[
        R(h)=-6,
\]
the Lorentz cone over \((M,h)\) is the expected ground state.  Equivalently, the
infimum of the Fischer-Moncrief Hamiltonian \cite{fischer2000reduced,fischer2001reduced} is determined by the Yamabe
invariant of \(M\).  With the usual normalization, this infimum may be written
as
\[
        \mathcal H_{\inf}
        =
        \left(-\frac{3}{2}\sigma(M)\right)^{3/2},
\]
up to the corresponding fixed convention in the definition of
\(\mathcal H\).  The essential point is that the minimizing configurations are
exactly the hyperbolic CMC data
\[
        \Sigma=0,
        \qquad
        \text{Ric}(g)=-2g.
\]

\noindent This leads to the following stability question.  Suppose a sequence of vacuum
CMC data \((g_i,k_i)\) has reduced Hamiltonian converging to the value of the
Lorentz-cone model.  Since the Hamiltonian is only a scalar quantity, one cannot
expect smooth convergence of \((g_i,k_i)\).  This is exactly analogous to the
positive mass stability problem: a sharp scalar invariant can concentrate its
defect on small regions and therefore cannot, by itself, control the geometry
in a strong topology.  The correct conclusion is a $C^{0}$
type convergence away from exceptional sets of vanishing volume, together with decay of the transverse-traceless part of the
second fundamental form.

\noindent The corresponding statement is the following.

\begin{theorem}[Reduced-Hamiltonian stability]
\label{main2}
Let \((M,h)\) be a closed hyperbolic three-manifold with
$ \operatorname{Ric}_h=-2h .$
Let \((M,g_i,k_i)\) be smooth vacuum CMC initial data with
\[
        \tau_i:=\operatorname{tr}_{g_i}k_i<0,
        \qquad
        k_i=\Sigma_i+\frac{\tau_i}{3}g_i,
\]
and set
\[
        \bar g_i:=\frac{\tau_i^2}{9}g_i,
        \qquad
        \bar\Sigma_i:=\frac{|\tau_i|}{3}\Sigma_i .
\]
Then the Hamiltonian constraint reduces to
$ R(\bar g_i)+6=|\bar\Sigma_i|_{\bar g_i}^{2}\geq 0$.
Assume the reduced Hamiltonian closeness of the physical data and that corresponds to a slice of the Lorentz cone spacetime
\[
        \mathcal H(g_i,k_i)\longrightarrow \mathcal{H}(h,k)=27\operatorname{Vol}_h(M).
\]
Then, after passing to a subsequence, there exist
$ Z_i\subset M,~ G_i:=M\setminus Z_i,$
compact domains \(K_i\subset M\), and diffeomorphisms
$\psi_i:K_i\longrightarrow G_i$
such that
\[
        \operatorname{Vol}_{\bar g_i}(Z_i)\to0,
        \operatorname{Vol}_h(M\setminus K_i)\to0,
\]
and
\[
        \|\psi_i^*\bar g_i-h\|_{C^0(K_i,h)}\to0.
\]
\end{theorem}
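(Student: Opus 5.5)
The plan is to reduce Theorem \ref{main2} directly to Theorem \ref{main1}, applied to the CMC-normalized metrics \(\bar g_i\).

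\textbf{Step 1: the normalized constraint.} First I verify the normalized Hamiltonian constraint. Since \(\operatorname{tr}_{g_i}\Sigma_i=0\), we have \(|k_i|^2_{g_i}=|\Sigma_i|^2_{g_i}+\tau_i^2/3\). The constraint \eqref{eq:HC} then reads
\[
R(g_i)=|\Sigma_i|_{g_i}^2-\tfrac{2}{3}\tau_i^2 .
\]
Under the constant rescaling \(\bar g_i=\frac{\tau_i^2}{9}g_i\) we get \(R(\bar g_i)=\frac{9}{\tau_i^2}R(g_i)\). Also \(|\bar\Sigma_i|^2_{\bar g_i}=\frac{\tau_i^2}{9}\cdot\frac{81}{\tau_i^4}|\Sigma_i|^2_{g_i}=\frac{9}{\tau_i^2}|\Sigma_i|^2_{g_i}\), because two inverse metrics each contribute a factor \(9/\tau_i^2\). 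Hence
\[
R(\bar g_i)+6=|\bar\Sigma_i|^2_{\bar g_i}\geq 0,
\]
so \(R(\bar g_i)\geq -6\). The divergence-free condition is scale invariant, although it is not needed here.

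\textbf{Step 2: the volume deficit.} Next I convert the Hamiltonian hypothesis into a volume deficit. As computed above,
\[
\mathcal H(g_i,k_i)=(-\tau_i)^3\operatorname{Vol}_{g_i}(M)=27\operatorname{Vol}_{\bar g_i}(M).
\]
So the hypothesis gives \(\operatorname{Vol}_{\bar g_i}(M)\to\operatorname{Vol}_h(M)\). The scalar curvature bound from Step 1 and the hyperbolic volume comparison recalled in the introduction (Schoen's conjecture, valid in dimension three by Perelman) give \(\operatorname{Vol}_{\bar g_i}(M)\geq\operatorname{Vol}_h(M)\). Setting
\[
\delta_i:=\operatorname{Vol}_{\bar g_i}(M)-\operatorname{Vol}_h(M),
\]
we obtain \(0\leq\delta_i\to0\). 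These are exactly the hypotheses of Theorem \ref{main1} for the sequence \(\bar g_i\).

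\textbf{Step 3: applying Theorem \ref{main1}.} Theorem \ref{main1}, after passing to a subsequence, produces the following:
\begin{itemize}
\item sets \(Z_i\) and \(G_i=M\setminus Z_i\) with \(\operatorname{Vol}_{\bar g_i}(Z_i)\to0\);
\item compact domains \(K_i\) with \(\operatorname{Vol}_h(M\setminus K_i)\to0\);
\item diffeomorphisms onto their images \(\Phi_i:G_i\to K_i\) with \((1-\varepsilon_i)\Phi_i^*h\leq\bar g_i\leq(1+\varepsilon_i)\Phi_i^*h\) on \(G_i\).
\end{itemize}

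\textbf{Step 4: inverting the maps.} I set \(\psi_i:=\Phi_i^{-1}\), defined on \(\Phi_i(G_i)\). Pulling the two-sided bound back by \(\psi_i\) gives
\[
(1-\varepsilon_i)h\leq\psi_i^*\bar g_i\leq(1+\varepsilon_i)h .
\]
By diagonalizing \(\psi_i^*\bar g_i\) with respect to \(h\) pointwise, this is equivalent to \(\|\psi_i^*\bar g_i-h\|_{C^0,h}\leq\varepsilon_i\to0\).

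\textbf{Main obstacle: matching the domains.} The only genuinely delicate point is bookkeeping. Theorem \ref{main2} asks for \(\psi_i\) defined on all of \(K_i\) and onto \(G_i\), while Theorem \ref{main1} only gives \(\Phi_i\) as a diffeomorphism onto its image inside \(K_i\). I would redefine \(K_i':=\Phi_i(G_i)\) and work with this image. This requires showing \(\operatorname{Vol}_h(M\setminus K_i')\to0\). That follows because the volume form comparison gives
\[
\operatorname{Vol}_h(K_i')=\operatorname{Vol}_{\Phi_i^*h}(G_i)\geq(1+\varepsilon_i)^{-3/2}\operatorname{Vol}_{\bar g_i}(G_i),
\]
and the right-hand side tends to \(\operatorname{Vol}_h(M)\), since \(\operatorname{Vol}_{\bar g_i}(M)\to\operatorname{Vol}_h(M)\) and \(\operatorname{Vol}_{\bar g_i}(Z_i)\to0\).

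If smoothness or compactness of \(K_i'\) is needed, I would shrink \(G_i\) slightly: choose a smooth compact domain inside \(K_i'\) carrying almost all of its \(h\)-volume, and add the removed preimage to \(Z_i\). The added set has small \(\bar g_i\)-volume by the same two-sided volume comparison, so \(\operatorname{Vol}_{\bar g_i}(Z_i)\to0\) is preserved.
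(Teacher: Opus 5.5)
Your proposal is correct and follows essentially the same route as the paper. Both normalize via $\bar g_i=\frac{\tau_i^2}{9}g_i$ to get $R(\bar g_i)+6=|\bar\Sigma_i|^2_{\bar g_i}\geq 0$ and $\mathcal H(g_i,k_i)=27\operatorname{Vol}_{\bar g_i}(M)$, then apply Theorem~\ref{main1} to $\bar g_i$; your explicit use of the hyperbolic volume comparison for $\delta_i\geq 0$, and your inversion $\psi_i=\Phi_i^{-1}$ on $\Phi_i(G_i)$ with the volume-form estimate giving $\operatorname{Vol}_h(M\setminus\Phi_i(G_i))\to 0$, supply bookkeeping that the paper leaves implicit (it states the conclusion only in terms of $\Phi_i$).
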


\noindent \subsection{Interpretation for the reduced Einstein dynamics}

The stability theorem gives a precise mathematical formulation of the
asymptotic picture suggested by Fischer--Moncrief
\cite{fischer2000reduced,fischer2001reduced}: on an expanding
CMC branch over a compact hyperbolizable Cauchy slice, the Lorentz cone over
the hyperbolic metric is the natural ground state of the reduced Einstein
dynamics.

\noindent Indeed, along a vacuum CMC solution one writes
\[
        k=\Sigma+\frac{\tau}{3}g,
        \qquad
        \tau<0,
        \qquad
        \operatorname{tr}_g\Sigma=0,
        \qquad
        \operatorname{div}_g\Sigma=0 .
\]
After the CMC normalization
\[
        \bar g=\frac{\tau^2}{9}g,
        \qquad
        \bar\Sigma=\frac{|\tau|}{3}\Sigma ,
\]
the constraint equations become
\[
        R(\bar g)+6=|\bar\Sigma|_{\bar g}^2,
        \qquad
        \operatorname{div}_{\bar g}\bar\Sigma=0 .
\]
Thus the deviation from the hyperbolic scalar-curvature lower bound is
exactly measured by the normalized transverse-traceless part of the second
fundamental form.

\noindent The Fischer--Moncrief reduced Hamiltonian is, up to the dimensional
normalization,
\[
        \mathcal H(g,k)=(-\tau)^3\operatorname{Vol}_g(M)
        =
        27\operatorname{Vol}_{\bar g}(M).
\]
It is monotone nonincreasing along the expanding CMC Einstein flow, and its
formal minimum in the hyperbolizable class is attained by the Lorentz cone
solution
\[
        \mathbf g_{\mathrm{LC}}
        =
        -dT^2+T^2h,
        \qquad
        \operatorname{Ric}_h=-2h .
\]
Consequently, our theorem says the following: whenever a sequence of CMC
times \(t_i\) along such a solution satisfies
\[
        \mathcal H(g(t_i),k(t_i))
        \longrightarrow
        \mathcal H(h,k^h),
\]
the corresponding normalized data converge to the Lorentz-cone data outside
sets of vanishing normalized volume.  More precisely, according to the theorem \ref{main2}, there exist good
regions \(G_i\subset M\), reference domains \(K_i\subset M\), and
diffeomorphisms
$\psi_i:K_i\to G_i$
such that
\[
        \operatorname{Vol}_{\bar g(t_i)}(M\setminus G_i)\to0,
        \qquad
        \operatorname{Vol}_h(M\setminus K_i)\to0,
\]
and
\[
        \|\psi_i^*\bar g(t_i)-h\|_{C^0(K_i,h)}\to0 .
\]
Thus, at any sequence of times along which the reduced Hamiltonian approaches
its ground-state value, the normalized spatial geometry is volume-dominated
by the hyperbolic metric, and the non-hyperbolic part of the geometry is
confined to regions of vanishing normalized volume.

\noindent It is important, however, that this conclusion is conditional on the
existence of such late-time slices.  Monotonicity of the reduced Hamiltonian
alone implies the existence of a limit
\[
        \lim_{t\to\infty}\mathcal H(g(t),k(t))
        \geq
        \mathcal H(h,k^h),
\]
provided the expanding CMC flow exists for all future time, but it does not
by itself force this limit to equal the Lorentz-cone value.  The theorem
therefore proves a rigidity and compactness statement at the bottom of the
reduced Hamiltonian, not a large-data global convergence theorem for the
vacuum Einstein equations.

The latter problem remains substantially harder.  In large data, one cannot
expect the expanding CMC foliation to persist without additional control.
Concentration of the transverse-traceless part \(\Sigma\) may lead to strong
curvature focusing and trapped-surface formation, and hence to causal
geodesic incompleteness.  This is consistent with the continuation criteria
for the Cauchy problem: in the CMC framework, breakdown can occur only if the
appropriate spacetime control norms, in particular the \(L^\infty\) control
of the second fundamental form and the gradient of the lapse, degenerate by the fundamental work of Klainerman-Rodnianski \cite{klainerman2010breakdown}.
Thus the present theorem should be viewed as a sharp stability statement for
near-minimizers of the reduced Hamiltonian.

% Preamble:
% \usepackage{amsmath}
% \usepackage{tikz}
% \usetikzlibrary{arrows.meta}

\begin{figure}
\begin{center}
\includegraphics[width=15cm,height=68cm,keepaspectratio,keepaspectratio]{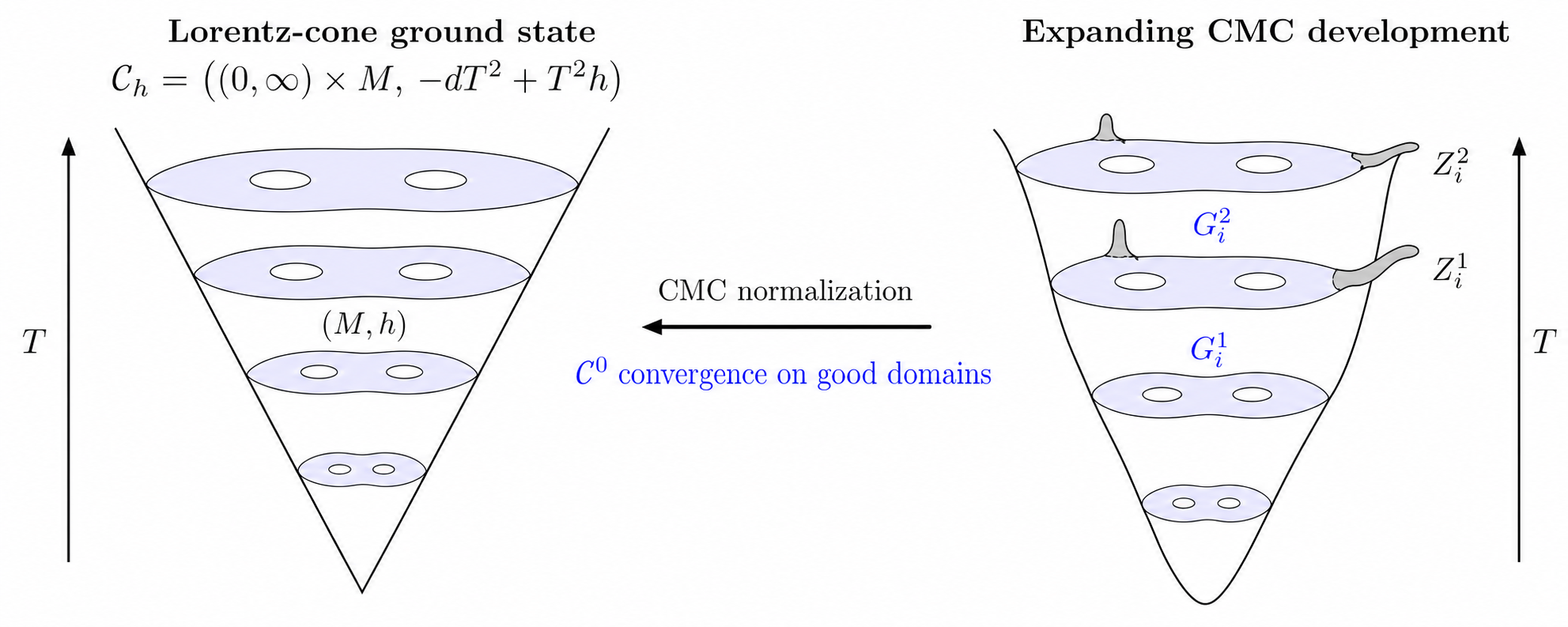}
\end{center}
\caption{
Schematic representation of reduced-Hamiltonian stability for an
expanding CMC Einstein development. Assuming global existence of the
CMC evolution, for instance through verification of an appropriate
Klainerman--Rodnianski continuation criterion, the normalized
large-time slices decompose into good domains \(G_i^{\,1}\) and
\(G_i^{\,2}\) and exceptional regions \(Z_i^{\,1}\) and \(Z_i^{\,2}\).
On the good domains, the normalized geometry converges in \(C^0\) to
the hyperbolic metric \(h\). The exceptional regions may contain fingers, but their
normalized volumes vanish asymptotically; their smaller size on the
later slice is intended to illustrate this reduced Hamiltonian decay. The limiting
ground-state spacetime is the Lorentz cone
\(\mathcal C_h=((0,\infty)\times M,-dT^2+T^2h)\).
}
\label{fig:lorentz-cone-stability}
\end{figure}

\subsection{Acknowledgements}
\noindent This work is supported by Beijing Institute of Mathematical Sciences and Applications of Yau Mathematical Science Center at Tsinghua University.

\section{Normalized Ricci Flow on Hyperbolizable Manifolds} 
\label{ricci}
\noindent Let us consider a closed connected Riemannian $3-$manifold $(M,g)$ in the same topological class as the closed connected hyperbolic manifold $(M,h)$. In addition, we always assume the scalar curvature bound 
$\text{Scal}(g)\geq -6$. We consider the normalized Ricci flow with the initial condition $(M,g)$ i.e., we aim to construct one parameter family of metrics $t\mapsto g(t)$ with $g(0)=g$ such that 
\begin{eqnarray}
\label{eq:flow}
 \frac{\partial g_{ij}}{\partial t}=-2(\text{Ric}[g]+2g(t)),~g(t=0)=g.   
\end{eqnarray}
Now define the obstruction tensor $E[g]:=\text{Ric}[g]+2g$. First, we prove a well-known but crucial proposition about the preservation of the lower bound of the scalar curvature under the normalized Ricci flow. 
\begin{proposition}
\label{scalar1}
Let \(g(t)\), \(t\in[0,T)\), be a smooth solution of the normalized Ricci flow
\[
        \partial_t g=-2\operatorname{Ric}_{g}-4g
\]
on a closed three-manifold.  Set
\[
        Q(t):=\operatorname{Scal}_{g(t)}+6,
        \qquad
        E(t):=\operatorname{Ric}_{g(t)}+2g(t).
\]
If \(Q(0)\geq 0\) on \(M\), then
\[
        Q(t)\geq 0
        \qquad\text{on }M
\]
for every \(t\in[0,T)\).  More precisely,
\[
        \min_M Q(t)\geq e^{-4t}\min_M Q(0).
\]
\end{proposition}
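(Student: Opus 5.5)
The proof is a maximum principle argument applied to the evolution equation for $Q$. First I will derive the evolution of scalar curvature under the normalized flow. For unnormalized Ricci flow one has $\partial_t R=\Delta R+2|\operatorname{Ric}|^2$. The normalization term $-4g$ contributes $-\tfrac{d}{ds}$ of the scaling: if $\partial_t g=-2\operatorname{Ric}-4g$, then $\partial_t R=\Delta R+2|\operatorname{Ric}|^2+4R$. This holds because $R$ scales like $g^{-1}$, so $-4g$ in $\partial_t g$ produces $+4R$.

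Next I will rewrite the equation in terms of $E$ and $Q$. Decompose $|\operatorname{Ric}|^2=|E-2g|^2=|E|^2-4\operatorname{tr}E+12$, and use $\operatorname{tr}E=R+6=Q$. This gives
\[
\partial_t Q=\Delta Q+2|E|^2-8Q+24+4R=\Delta Q+2|E|^2-8Q+4Q=\Delta Q+2|E|^2-4Q,
\]
that is, $(\partial_t-\Delta+4)Q=2|E|^2\geq0$. This is the identity quoted in the introduction. I will verify the constants carefully in dimension three ($|g|^2=3$), since a sign or coefficient slip here is the only real risk.

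Then I apply the scalar maximum principle to $\tilde Q:=e^{4t}Q$, which satisfies $\partial_t\tilde Q\geq\Delta\tilde Q$. Let $m(t):=\min_M\tilde Q(t)$. Since $M$ is closed, $m$ is Lipschitz. At a point $x_t$ realizing the minimum we have $\Delta\tilde Q\geq0$. By Hamilton's trick for the derivative of a minimum, the forward difference quotient (or lower Dini derivative) of $m$ satisfies $\liminf_{h\downarrow0}\frac{m(t+h)-m(t)}{h}\geq0$, so $m$ is nondecreasing. Hence $e^{4t}\min_M Q(t)\geq\min_M Q(0)$, which is the claimed bound $\min_M Q(t)\geq e^{-4t}\min_M Q(0)$. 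In particular $Q(0)\geq0$ implies $Q(t)\geq0$.

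The only mildly delicate step is justifying monotonicity of the nonsmooth function $m(t)$. I would handle it either by the standard Dini-derivative argument above, or by comparing with $\tilde Q+\varepsilon(1+t)$ and arguing by contradiction at a first time where this quantity touches $\min_M \tilde Q(0)$ from above, then letting $\varepsilon\to0$.
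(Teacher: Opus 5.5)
Your proposal is correct and matches the paper's proof. Both derive $(\partial_t-\Delta+4)Q=2|E|^2$ from $|\operatorname{Ric}|^2=|E|^2-4Q+12$ and then apply the maximum principle to $e^{4t}Q$. Your Dini-derivative (or $\varepsilon$-perturbation) argument just makes explicit the step the paper cites as the parabolic maximum principle.
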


\begin{proof}
For the normalized flow
\[
        \partial_t g=-2\operatorname{Ric}_{g}-4g,
\]
the scalar curvature satisfies
\[
        \partial_t \operatorname{Scal}_{g}
        =
        \Delta_g \operatorname{Scal}_{g}
        +2|\operatorname{Ric}_{g}|_g^2
        +4\operatorname{Scal}_{g}.
\]
Since \(Q=\operatorname{Scal}_{g}+6\), this gives
\[
        (\partial_t-\Delta_g)Q
        =
        2|\operatorname{Ric}_{g}|_g^2
        +4\operatorname{Scal}_{g}.
\]
Writing
\[
        \operatorname{Ric}_{g}=E-2g,
        \qquad
        \operatorname{tr}_g E=\operatorname{Scal}_{g}+6=Q,
\]
and we compute
$|\operatorname{Ric}_{g}|_g^2=
        |E|_g^2-4Q+12.
$
Because \(\operatorname{Scal}_{g}=Q-6\), it follows that
\[
        (\partial_t-\Delta_g)Q
        =
        2|E|_g^2-4Q.
\]
Equivalently,
\[
        (\partial_t-\Delta_g+4)Q=2|E|_g^2\geq 0.
\]
Let
\[
        \widetilde Q(t,x):=e^{4t}Q(t,x).
\]
Then
\[
        (\partial_t-\Delta_g)\widetilde Q
        =
        e^{4t}\bigl((\partial_t-\Delta_g)Q+4Q\bigr)
        =
        2e^{4t}|E|_g^2
        \geq 0.
\]
By the parabolic maximum principle on the closed manifold \(M\),
\[
        \min_M \widetilde Q(t)\geq \min_M \widetilde Q(0)=\min_M Q(0).
\]
Hence
\[
        \min_M Q(t)\geq e^{-4t}\min_M Q(0).
\]
In particular, if \(Q(0)\geq0\), then \(Q(t)\geq0\) for every \(t\in[0,T)\).
\end{proof}
\noindent This proposition \ref{scalar1} is vital and allows us to utilize the normalized Ricci flow technique to study the volume stability. In the following straightforward proposition, we study how the spacetime integral of the scalar curvature defect $Q$ is controlled by means of the volume deficit. 
\begin{proposition}
\label{scalar2}
Let \(g(t)\), \(t\in[0,T)\), be a smooth solution of the normalized Ricci flow
\[
        \partial_t g=-2\operatorname{Ric}_{g}
        -4g
\]
on a closed three-manifold \(M\).  Set
\[
        Q(t):=\operatorname{Scal}_{g(t)}+6.
\]
Then, for every \(0\leq \tau<T\),
\[
        \int_0^\tau\int_M Q(t)\,d\mu_{g(t)}\,dt
        =
        \operatorname{Vol}_{g(0)}(M)-\operatorname{Vol}_{g(\tau)}(M).
\]
In particular, if \(g(t)\) extends smoothly to \(t=T\), the same identity holds with \(\tau=T\).
\end{proposition}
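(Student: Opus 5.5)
The plan is to compute the time derivative of the total volume along the flow, then integrate in time using the fundamental theorem of calculus.

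\emph{Step 1: the volume form.} Under a variation $\partial_t g = v$ one has $\partial_t\, d\mu_g = \tfrac12 \operatorname{tr}_g v\, d\mu_g$. Here $v=-2\operatorname{Ric}_g-4g$, so
\[
\operatorname{tr}_g v = -2\operatorname{Scal}_g-12=-2Q,
\]
using $\operatorname{tr}_g g=3$ in dimension three. Hence $\partial_t\, d\mu_{g(t)}=-Q(t)\,d\mu_{g(t)}$.

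\emph{Step 2: the total volume.} Since $M$ is closed and $g(t)$ is smooth in $(t,x)$, we may differentiate under the integral sign. This gives
\[
\frac{d}{dt}\operatorname{Vol}_{g(t)}(M)=-\int_M Q(t)\,d\mu_{g(t)}
\]
for every $t\in[0,T)$.

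\emph{Step 3: integration in time.} The map $t\mapsto \int_M Q(t)\,d\mu_{g(t)}$ is continuous on $[0,\tau]$ for each $\tau<T$. Integrating the identity of Step 2 from $0$ to $\tau$ yields the claimed formula
\[
\int_0^\tau\int_M Q(t)\,d\mu_{g(t)}\,dt=\operatorname{Vol}_{g(0)}(M)-\operatorname{Vol}_{g(\tau)}(M).
\]

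\emph{Step 4: the endpoint $\tau=T$.} If $g(t)$ extends smoothly to $t=T$, then both sides are continuous in $\tau$ up to $T$, and we pass to the limit $\tau\to T$. Alternatively, one can apply Steps 1--3 directly on $[0,T]$.

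No step is a real obstacle. The only points requiring care are the trace computation in Step 1 and the justification for differentiating under the integral in Step 2, which follows from compactness of $M$ and smoothness of the flow. We also record a consequence: by Proposition \ref{scalar1}, $Q\geq0$ whenever $Q(0)\geq0$, so in that case the volume is nonincreasing along the flow.
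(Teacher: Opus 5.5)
Your proposal is correct and follows essentially the same route as the paper: you use the first variation of the volume form with $\operatorname{tr}_g(\partial_t g)=-2Q$, differentiate under the integral using compactness and smoothness, and then integrate from $0$ to $\tau$. Your explicit treatment of the endpoint $\tau=T$ by continuity adds a small justification that the paper leaves implicit.
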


\begin{proof}
The first variation formula for the Riemannian volume form gives
\[
        \partial_t d\mu_{g(t)}
        =
        \frac12\operatorname{tr}_{g(t)}(\partial_t g)\,d\mu_{g(t)}.
\]
Using the normalized Ricci flow equation,
\[
        \operatorname{tr}_{g(t)}(\partial_t g)
       =
        -2Q(t).
\]
Hence
\[
        \partial_t d\mu_{g(t)}=-Q(t)\,d\mu_{g(t)}.
\]
Since \(M\) is closed and the solution is smooth, differentiation under the integral is justified, and therefore
\[
        \frac{d}{dt}\operatorname{Vol}_{g(t)}(M)
        =
        \int_M \partial_t d\mu_{g(t)}
        =
        -\int_M Q(t)\,d\mu_{g(t)}.
\]
Integrating this identity from \(0\) to \(\tau<T\) yields
\[
        \operatorname{Vol}_{g(\tau)}(M)-\operatorname{Vol}_{g(0)}(M)
        =
        -\int_0^\tau\int_M Q(t)\,d\mu_{g(t)}\,dt,
\]
which is the desired formula.
\end{proof}\noindent Now we introduce the most important entity in our context namely the Perelman entropy functional. For a closed Riemannian $3-$manifold, define the Perelman's entropy functional as 
\begin{eqnarray}
\lambda[g]:=\inf_{\int_{M}u^{2}d\mu_{g}=1}\int_{M}\left(4|\nabla u|^{2}_{g}+\text{Scal}(g)u^{2}\right)d\mu_{g}.    
\end{eqnarray}
Notice that this function $\lambda(g)$ is not scale-invariant. We recall the following scale-invariant $\lambda$ functional 
\begin{eqnarray}
\overline{\lambda}[g]:=\lambda[g]\text{Vol}(M,g)^{\frac{2}{3}}
\end{eqnarray}
The first result that we prove is the smallness of the entropy deficit given the small volume deficit and the scalar curvature bound.
\noindent In the next proposition, we prove that along the normalized Ricci flow, we have the following monotonicity property of the rescaled functional $\overline{\lambda}[g(t)]$

\begin{proposition}
\label{monotonicity}
Let \(M^{3}\) be a closed hyperbolizable three-manifold, and let
\(g(t)\), \(t\in[0,T)\), be a smooth solution of the rescaled Ricci flow
\begin{equation}
\label{eq:rescaled-RF}
        \partial_t g=-2\bigl(\operatorname{Ric}_{g}+2g\bigr).
\end{equation}
Let
\begin{equation}
\label{eq:lambda-functional}
        \lambda[g]
        :=
        \inf_{\int_M u^2\,d\mu_g=1}
        \int_M
        \left(
                4|\nabla u|_g^2+R_g u^2
        \right)
        d\mu_g
\end{equation}
be Perelman's \(\lambda\)-functional, and define the scale-invariant entropy
\begin{equation}
\label{eq:scaled-lambda}
        \overline{\lambda}[g]
        :=
        \lambda[g]\operatorname{Vol}(M,g)^{2/3}.
\end{equation}
Then \(\overline{\lambda}[g(t)]\) is monotonically non-decreasing along
\eqref{eq:rescaled-RF}. More precisely, if \(f=f(t)\) is the minimizer for
\(\lambda[g(t)]\), normalized by
\begin{equation}
\label{eq:f-normalization}
        \int_M e^{-f}\,d\mu_g=1,
\end{equation}
then
\begin{equation}
\label{eq:scaled-lambda-monotonicity}
\begin{aligned}
        \frac{d}{dt}\overline{\lambda}[g(t)]
        &=
        2\operatorname{Vol}(M,g)^{2/3}
        \int_M
        \left|
                \operatorname{Ric}_g+\nabla^2 f
                -
                \frac{\lambda[g]}{3}g
        \right|_g^2
        e^{-f}\,d\mu_g        \\
        &\quad
        +
        \frac{2}{3}\operatorname{Vol}(M,g)^{2/3}
        \lambda[g]\bigl(\lambda[g]-\bar R_g\bigr),
\end{aligned}
\end{equation}
where
\begin{equation}
\label{eq:average-scalar-curvature}
        \bar R_g
        :=
        \frac{1}{\operatorname{Vol}(M,g)}
        \int_M R_g\,d\mu_g .
\end{equation}
In particular,
\begin{equation}
\label{eq:monotonic1}
        \frac{d}{dt}\overline{\lambda}[g(t)]\geq 2\operatorname{Vol}(M,g)^{2/3}
        \int_M
        \left|
                \operatorname{Ric}_g+\nabla^2 f
                -
                \frac{\lambda[g]}{3}g
        \right|_g^2
        e^{-f}\,d\mu_g  \geq 0.
\end{equation}
\end{proposition}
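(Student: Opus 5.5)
The plan is to reduce everything to Perelman's classical monotonicity formula for the unnormalized flow, using the scaling behaviour of \(\lambda\) and of the volume. After that, a single algebraic identity converts \(|\operatorname{Ric}_g+\nabla^2 f|^2\) into the traceless-type quantity \(|\operatorname{Ric}_g+\nabla^2 f-\tfrac{\lambda}{3}g|^2\).

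\textbf{Step 1: differentiability of \(\lambda\).} First recall that \(\lambda[g]\) is the lowest eigenvalue of the Schrödinger operator \(-4\Delta_g+R_g\). This eigenvalue is simple and has a positive eigenfunction \(u=e^{-f/2}\), normalized so that \eqref{eq:f-normalization} holds. Analytic perturbation theory then gives that \(t\mapsto\lambda[g(t)]\) and \(t\mapsto f(t)\) are smooth. It also justifies the first-variation formula
\[
\delta\lambda=\int_M\bigl(-v_{ij}(R_{ij}+\nabla_i\nabla_j f)\bigr)e^{-f}d\mu
\]
for a variation \(\delta g=v\). Here the variation of \(f\) drops out because \(f\) is a minimizer.

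\textbf{Step 2: split the velocity.} The velocity \(-2(\operatorname{Ric}+2g)\) splits into the Ricci part \(-2\operatorname{Ric}\) and the scaling part \(-4g\).
\begin{itemize}
\item For the Ricci part, Perelman's computation gives the contribution \(2\int_M|\operatorname{Ric}+\nabla^2f|^2e^{-f}d\mu\).
\item For the scaling part, the homogeneity \(\lambda[cg]=c^{-1}\lambda[g]\) gives the contribution \(+4\lambda\).
\end{itemize}

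\textbf{Step 3: the volume factor.} By the identity already established in Proposition \ref{scalar2},
\[
\tfrac{d}{dt}\operatorname{Vol}^{2/3}=-\tfrac23(\bar R_g+6)\operatorname{Vol}^{2/3}.
\]
Combining this with Step 2 yields
\[
\frac{d}{dt}\overline\lambda
=\operatorname{Vol}^{2/3}\Bigl(2\!\int_M|\operatorname{Ric}+\nabla^2 f|^2e^{-f}d\mu+4\lambda-\tfrac23\lambda(\bar R_g+6)\Bigr)
=\operatorname{Vol}^{2/3}\Bigl(2\!\int_M|\operatorname{Ric}+\nabla^2 f|^2e^{-f}d\mu-\tfrac23\lambda\bar R_g\Bigr).
\]

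\textbf{Step 4: the algebraic identity.} Expand
\[
|\operatorname{Ric}+\nabla^2f-\tfrac{\lambda}{3}g|^2=|\operatorname{Ric}+\nabla^2 f|^2-\tfrac{2\lambda}{3}(R+\Delta f)+\tfrac{\lambda^2}{3}.
\]
Integration by parts gives \(\int\Delta f\,e^{-f}=\int|\nabla f|^2e^{-f}\). Hence
\[
\int(R+\Delta f)e^{-f}=\int(R+|\nabla f|^2)e^{-f}=\lambda,
\]
where the last equality is the value of \(\lambda\) at \(u=e^{-f/2}\). Together with \(\int e^{-f}=1\), this gives
\[
\int_M|\operatorname{Ric}+\nabla^2 f|^2e^{-f}d\mu=\int_M|\mathcal S|^2e^{-f}d\mu+\tfrac{\lambda^2}{3}.
\]
Substituting into Step 3 produces exactly \eqref{eq:scaled-lambda-monotonicity}.

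\textbf{Step 5: the sign of the extra term.} For \eqref{eq:monotonic1} it remains to show \(\lambda(\lambda-\bar R_g)\ge0\).
\begin{itemize}
\item Testing \eqref{eq:lambda-functional} with the constant \(u=\operatorname{Vol}^{-1/2}\) gives \(\lambda\le\bar R_g\).
\item Since \(M\) is hyperbolizable, its Yamabe invariant \(\sigma(M)\) is negative. Hence every conformal class has negative Yamabe constant, and so the first eigenvalue of the conformal Laplacian is negative. Because \(-4\Delta+R\) and the conformal Laplacian \(-8\Delta+R\) have first eigenvalues of the same sign (both signs agree with the sign of the Yamabe constant), it follows that \(\lambda[g]<0\).
\end{itemize}
The product of the two nonpositive factors \(\lambda\) and \(\lambda-\bar R_g\) is therefore nonnegative, which gives \eqref{eq:monotonic1}.

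I expect the main obstacles to be the regularity justification in Step 1 and the sign argument for \(\lambda\) in Step 5, which rests on the Yamabe-invariant input; the remaining steps are routine computation.
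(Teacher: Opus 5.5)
Your proof is correct and follows the paper's route essentially step for step: Perelman's formula for the Ricci part plus the scaling contribution $4\lambda$, the volume factor $-\tfrac23(\bar R_g+6)\operatorname{Vol}^{2/3}$, the trace identity giving $\int_M|\operatorname{Ric}_g+\nabla^2 f|^2e^{-f}\,d\mu_g=\int_M|\mathcal S|^2e^{-f}\,d\mu_g+\tfrac{\lambda^2}{3}$, and the sign of $\lambda(\lambda-\bar R_g)$ via the constant test function and negative Yamabe type. One correction is needed in Step 5: the first eigenvalues of $-4\Delta_g+R_g$ and $-8\Delta_g+R_g$ need not have the same sign in general (metrics conformal to the round $S^3$ can have $\lambda<0$), but you only need the one-sided Rayleigh-quotient comparison $\lambda[g]\le\lambda_1(-8\Delta_g+R_g)$, whose right-hand side is negative because the Yamabe constant of $[g]$ is negative.
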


\begin{proof}
Let
\[
        V(t):=\operatorname{Vol}(M,g(t))
\]
denote the volume of $(M,g(t))$.
Since
\[
        \partial_t g=-2\operatorname{Ric}_g-4g,
\]
we have
\[
        \operatorname{tr}_g(\partial_t g)
        =
        -2R_g-12.
\]
Therefore, direct computation yields
\begin{equation}
\label{eq:measure-evolution-rescaled}
        \partial_t d\mu_g
        =
        \frac{1}{2}\operatorname{tr}_g(\partial_t g)d\mu_g
        =
        -(R_g+6)d\mu_g
\end{equation}
leading to
\begin{equation}
\label{eq:volume-evolution-rescaled}
        \frac{d}{dt}V(t)
        =
        -\int_M (R_g+6)\,d\mu_g
        =
        -V(t)(\bar R_g+6),
\end{equation}
where denote the average scalar curvature over $(M,g)$ by $\overline{R}_{g}$. Next we compute the evolution of \(\lambda[g(t)]\). Instead of direct computation, we observe the following point regarding the scaling property of the $\lambda(g)$ functional. Along the ordinary
unnormalized Ricci flow
\[
        \partial_t g=-2\operatorname{Ric}_g,
\]
Perelman's monotonicity formula gives
\begin{equation}
\label{eq:perelman-lambda-ordinary}
        \frac{d}{dt}\lambda[g(t)]
        =
        2\int_M
        \left|
                \operatorname{Ric}_g+\nabla^2 f
        \right|_g^2
        e^{-f}\,d\mu_g .
\end{equation}
The present flow differs from the ordinary Ricci flow by the pure scaling
term \(-4g\). We use the scaling to obtain the time evolution of $\lambda[g(t)]$ along the normalized Ricci flow Since
\[
        \lambda[cg]=c^{-1}\lambda[g],
\]
the pure scaling flow \(\partial_t g=-4g\) contributes \(4\lambda[g]\) to
\(\frac{d}{dt}\lambda[g(t)]\). Hence, along
\eqref{eq:rescaled-RF}, one has
\begin{equation}
\label{eq:lambda-evolution-rescaled}
        \frac{d}{dt}\lambda[g(t)]
        =
        2\int_M
        \left|
                \operatorname{Ric}_g+\nabla^2 f
        \right|_g^2
        e^{-f}\,d\mu_g
        +
        4\lambda[g(t)].
\end{equation}
One can also see this by direct differentiation, using the normalized Ricci flow, and the fact that $\text{Ric}+\text{Hess}(f)$ is divergence free with respect to the measure $e^{-f}d\mu_{g}$. In particular, one recalls the expression for $\lambda$ in terms of $f$ for which the infimum of $\int_{M}(4|\nabla u|^{2}+R_{g}u^{2})d\mu_{g}$ is attained with $u=e^{-f/2}$ and $\int_{M}u^{2}d\mu_{g}=1$
\begin{align}
    \lambda(g)=2\Delta_{g}f-|\nabla f|^{2}_{g}+R_{g}.
\end{align}
Clearly,  $\lambda(g)$ is not monotonic along the normalized Ricci flow since for a hyperbolizable manifold $\lambda(g)$ is negative. Therefore, we turn our attention to the scale-invariant entropy functional, which is monotonic under the re-scaled Ricci flow.   

\noindent We now differentiate
\[
        \overline{\lambda}[g(t)]
        =
        \lambda[g(t)]V(t)^{2/3}.
\]
Using \eqref{eq:volume-evolution-rescaled} and
\eqref{eq:lambda-evolution-rescaled}, we obtain
\begin{align}
        \frac{d}{dt}\overline{\lambda}[g(t)]
        &=
        V^{2/3}\frac{d}{dt}\lambda[g(t)]
        +
        \frac{2}{3}\lambda[g(t)]V^{-1/3}\frac{dV}{dt} \notag \\
        &=
        V^{2/3}
        \left[
                2\int_M
                \left|
                        \operatorname{Ric}_g+\nabla^2 f
                \right|_g^2
                e^{-f}\,d\mu_g
                +
                4\lambda[g]
        \right]
        -
        \frac{2}{3}\lambda[g]V^{2/3}(\bar R_g+6)\\
       & =
        2V^{2/3}
        \int_M
        \left|
                \operatorname{Ric}_g+\nabla^2 f
        \right|_g^2
        e^{-f}\,d\mu_g
        -
        \frac{2}{3}V^{2/3}\lambda[g]\bar R_g.
\end{align}
It remains to rewrite this expression in a nonnegative form for the case of a hyperbolizable manifold that we are considering. The following series of computations yield the desired result. Since \(f\)
realizes \(\lambda[g]\), we have
\begin{equation}
\label{eq:lambda-f-form}
        \lambda[g]
        =
        \int_M
        \left(
                R_g+|\nabla f|_g^2
        \right)
        e^{-f}\,d\mu_g .
\end{equation}
Moreover, note the following integration by parts,
\begin{equation}
\label{eq:laplacian-f-identity}
        \int_M \Delta f\,e^{-f}\,d\mu_g
        =
        \int_M |\nabla f|_g^2 e^{-f}\,d\mu_g .
\end{equation}
Therefore
\begin{equation}
\label{eq:trace-identity}
        \int_M
        \operatorname{tr}_g
        \left(
                \operatorname{Ric}_g+\nabla^2 f
        \right)
        e^{-f}\,d\mu_g
        =
        \int_M
        (R_g+\Delta f)e^{-f}\,d\mu_g
        =
        \lambda[g].
\end{equation}
Since \(\dim M=3\), we decompose
\[
        \operatorname{Ric}_g+\nabla^2 f
        =
        \left(
                \operatorname{Ric}_g+\nabla^2 f
                -
                \frac{\lambda[g]}{3}g
        \right)
        +
        \frac{\lambda[g]}{3}g.
\]
Using \eqref{eq:trace-identity}, we get
\begin{equation}
\label{eq:square-decomposition}
\begin{aligned}
        \int_M
        \left|
                \operatorname{Ric}_g+\nabla^2 f
        \right|_g^2
        e^{-f}\,d\mu_g
        &=
        \int_M
        \left|
                \operatorname{Ric}_g+\nabla^2 f
                -
                \frac{\lambda[g]}{3}g
        \right|_g^2
        e^{-f}\,d\mu_g        \\
        &\quad
        +
        \frac{\lambda[g]^2}{3}.
\end{aligned}
\end{equation}
Substituting \eqref{eq:square-decomposition} into the time derivative of the rescaled entropy $\overline{\lambda}[g(t)]$ gives
\begin{align}
        \frac{d}{dt}\overline{\lambda}[g(t)]
        &=
        2V^{2/3}
        \int_M
        \left|
                \operatorname{Ric}_g+\nabla^2 f
                -
                \frac{\lambda[g]}{3}g
        \right|_g^2
        e^{-f}\,d\mu_g
        +
        \frac{2}{3}V^{2/3}\lambda[g]^2
        -
        \frac{2}{3}V^{2/3}\lambda[g]\bar R_g \notag \\
        &=
        2V^{2/3}
        \int_M
        \left|
                \operatorname{Ric}_g+\nabla^2 f
                -
                \frac{\lambda[g]}{3}g
        \right|_g^2
        e^{-f}\,d\mu_g
        +
        \frac{2}{3}V^{2/3}
        \lambda[g]\bigl(\lambda[g]-\bar R_g\bigr).
\end{align}
This is precisely \eqref{eq:scaled-lambda-monotonicity}. Testing the definition of \(\lambda[g]\) with
the constant function \(u=Vol^{-1/2}\), we obtain
\begin{equation}
\label{eq:lambda-less-average-scalar}
        \lambda[g]\leq \bar R_g .
\end{equation}
Since \(M\) is closed and hyperbolizable, its Yamabe invariant is negative.
In particular \(M\) admits no metric of positive scalar curvature, and hence
\begin{equation}
\label{eq:lambda-nonpositive}
        \lambda[g]\leq 0
\end{equation}
for every smooth metric \(g\) on \(M\). Combining
\eqref{eq:lambda-less-average-scalar} and \eqref{eq:lambda-nonpositive},
we get
\[
        \lambda[g]\bigl(\lambda[g]-\bar R_g\bigr)\geq 0.
\]
The first term in \eqref{eq:scaled-lambda-monotonicity} is manifestly
nonnegative. Therefore
\[
        \frac{d}{dt}\overline{\lambda}[g(t)]\geq 0.
\]
This proves the proposition.
\end{proof}

\begin{corollary}
The volume normalized entropy $\overline{\lambda}(g(t))$ is non-decreasing along the volume normalized Ricci flow as long as the latter exists. Moreover, it is constant if and only if $(M,g)$ is compact hyperbolic.  
\end{corollary}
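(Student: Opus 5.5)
The corollary has two parts. The first part, monotonicity, is immediate from Proposition \ref{monotonicity}. Inequality \eqref{eq:monotonic1} shows that $\frac{d}{dt}\overline{\lambda}[g(t)]\geq 0$ on every interval where the smooth normalized flow exists. Integrating over $[t_1,t_2]$ gives $\overline{\lambda}[g(t_2)]\geq\overline{\lambda}[g(t_1)]$. The substance of the corollary is therefore the rigidity claim, which I would prove in both directions.

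\emph{If $g$ is hyperbolic, then $\overline\lambda$ is constant.} Suppose $\operatorname{Ric}_g=-2g$. Then $E\equiv0$, so $g(t)\equiv g$ is a stationary solution of \eqref{eq:flow}, and $\overline\lambda[g(t)]$ is trivially constant. If instead $g$ is hyperbolic with a different curvature normalization, I would use the scale invariance of $\overline\lambda$: the flow evolves $g$ by scalings and pullbacks by diffeomorphisms only, so $\overline\lambda$ is constant along it.

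\emph{If $\overline\lambda$ is constant, then $g$ is hyperbolic.} Suppose $\overline\lambda[g(t)]$ is constant on an interval. Then both nonnegative terms in \eqref{eq:scaled-lambda-monotonicity} vanish there. I would extract three facts from this.
\begin{enumerate}
\item From the first term, $\operatorname{Ric}_g+\nabla^2 f=\frac{\lambda[g]}{3}g$. Hence $g$ is a gradient shrinking, steady or expanding Ricci soliton, according to the sign of $\lambda$.
\item From the second term, $\lambda[g](\lambda[g]-\bar R_g)=0$.
\item Since $M$ is hyperbolizable, its Yamabe invariant is negative, which forces $\lambda[g]<0$. Indeed, if $\lambda=0$, the soliton equation would make $g$ a steady soliton on a closed manifold. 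Steady solitons on closed manifolds are Ricci-flat, and Ricci-flat three-manifolds are flat. But a hyperbolic manifold carries no flat metric, since its fundamental group is not virtually abelian.
\end{enumerate}
So $\lambda[g]=\bar R_g$. This is the equality case of the test $u=\operatorname{Vol}^{-1/2}$ in \eqref{eq:lambda-less-average-scalar}. Because the Rayleigh quotient attains its minimum at this $u$, the constant is an eigenfunction of the operator $-4\Delta+R_g$. This gives $R_g\equiv\bar R_g$ constant and $f$ constant. The soliton equation then reduces to $\operatorname{Ric}_g=\frac{\lambda}{3}g$ with $\lambda<0$, so $g$ is Einstein with negative constant. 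In dimension three, Einstein metrics have constant sectional curvature, so $g$ is hyperbolic up to scaling.

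The main obstacle is justifying the eigenfunction and minimizer step: one must know that the minimizer $f$ exists, is smooth and positive, and is unique. Uniqueness of the first eigenfunction is standard, because the lowest eigenvalue of a Schrödinger operator on a closed manifold is simple. Once uniqueness holds, equality in \eqref{eq:lambda-less-average-scalar} identifies $e^{-f/2}$ with the constant function, and the argument closes.
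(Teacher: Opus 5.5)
Your proposal is correct and follows the paper's proof essentially step for step: monotonicity comes from Proposition~\ref{monotonicity}, and constancy forces both nonnegative terms to vanish. Then $\lambda<0$ gives $\lambda=\bar R_g$, so the constant function realizes the minimum and $f$ is constant, which makes $g$ negative Einstein and hence hyperbolic; the converse follows because Einstein metrics are stationary up to scaling. The differences are cosmetic: you add a steady-soliton argument for $\lambda\neq 0$ where the paper simply invokes negative Yamabe type, and the paper adds a one-line scale-invariance remark, which you omit, transferring monotonicity from $\partial_t g=-2(\operatorname{Ric}+2g)$ to the volume-preserving normalization.
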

\begin{proof}
Let \(n=3\), and write
\[
        \overline{\lambda}(g)
        :=
        \lambda(g)\operatorname{Vol}(g)^{2/3}.
\]
Recall by the previous proposition \ref{monotonicity}
\[
\begin{aligned}
        \frac{d}{dt}\overline{\lambda}(g(t))
        &=
        2V^{2/3}
        \int_M
        \left|
        \operatorname{Ric}
        +
        \nabla^2 f
        -
        \frac{\lambda}{3}g
        \right|^2 e^{-f}\,d\mu_g        \\
        &\quad+
        \frac{2}{3}V^{2/3}\lambda
        \left(
        \lambda-\frac{1}{V}\int_M R\,d\mu_g
        \right),
\end{aligned}
\]
where \(V=\operatorname{Vol}(M,g(t))\). Since the constant function is an admissible test function for
\(\lambda(g)\),
\[
        \lambda(g)
        \leq
        \frac{1}{V}\int_M R\,d\mu_g .
\]
On a negative Yamabe manifold one has \(\lambda(g)\leq0\) for every metric
\(g\). Hence
\[
        \lambda
        \left(
        \lambda-\frac{1}{V}\int_M R\,d\mu_g
        \right)
        \geq0.
\]
Thus
\[
        \frac{d}{dt}\overline{\lambda}(g(t))\geq0.
\]
Since \(\overline{\lambda}\) is scale-invariant, the same monotonicity holds
along the volume-normalized Ricci flow. If \(\overline{\lambda}\) is constant on a time interval, then both
nonnegative terms above vanish. Hence
\[
        \operatorname{Ric}+\nabla^2 f
        =
        \frac{\lambda}{3}g
\]
and, since \(\lambda<0\) in the negative Yamabe class,
\[
        \lambda=
        \frac{1}{V}\int_M R\,d\mu_g .
\]
The latter equality means that the constant test function realizes the
minimum in the definition of \(\lambda(g)\). Therefore \(f\) is constant.
Consequently
\[
        \operatorname{Ric}
        =
        \frac{\lambda}{3}g .
\]
Thus \(g\) is Einstein with negative Einstein constant. In dimension three,
a negative Einstein metric has constant sectional curvature. Hence \(g\) is
compact hyperbolic.

\noindent Conversely, if \(g\) is compact hyperbolic, then it is Einstein and is fixed
by the volume-normalized Ricci flow up to the chosen normalization. Hence
\(\overline{\lambda}(g(t))\) is constant.
\end{proof}

\begin{remark}
The scale-invariant Perelman entropy
\[
        \overline{\lambda}[g]
        :=
        \lambda[g]\operatorname{Vol}(g)^{2/3}
\]
is monotone along Ricci flow only on the branch where
\(\lambda[g]\leq 0\).  In particular, on a negative Yamabe
three-manifold one has \(\lambda[g]\leq0\) for every metric \(g\), and hence
\(\overline{\lambda}[g(t)]\) is monotone along the flow.  In the positive
Yamabe case this sign condition may fail; when \(\lambda[g(t)]>0\), the
monotonicity of \(\overline{\lambda}\) is no longer guaranteed.  Thus
\(\overline{\lambda}\) cannot be used as a global monotone quantity in the
positive Yamabe setting.
\end{remark}

\noindent In order to use the entropy monotonicity, we first need to prove how the entropy deficit is controlled by the volume deficit. We show it in the next proposition 
\begin{proposition}
\label{entropydeficit}
 Let $(M,g)$ be a closed $3-$ manifold with $R(g)\geq -6$ and let $(M,h)$ be hyperbolic manifold with $R(h)=-6$. Suppose $V_{g}=V_{h}+\delta$, then the following is verified by the scale-invariant entropy 
 \begin{eqnarray}
  0\leq \lambda(h)-\lambda(g)  \leq C_{h}\delta 
 \end{eqnarray}
 for $\delta\ll 1$ and $C_{h}$ depending on the hyperbolic metric $h$.
\end{proposition}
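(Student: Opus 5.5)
The plan is to read the statement as one about the scale-invariant entropy $\overline{\lambda}$. With $\operatorname{Vol}$ fixed by the hypotheses, $\lambda$ and $\overline{\lambda}$ differ only by the factor $V^{2/3}$, and I will prove
\[
0\leq \overline{\lambda}[h]-\overline{\lambda}[g]\leq C_h\delta,
\]
then translate back using $V_g=V_h+\delta$. The two inequalities have very different natures. The upper bound on the deficit is elementary and uses only $R(g)\geq -6$ and the volume deficit. The nonnegativity of the deficit is a global statement about the topology of $M$.

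\textbf{Upper bound.} Since $R_g\geq -6$ pointwise, for every admissible $u$ with $\int_M u^2\,d\mu_g=1$ we have
\[
\int_M\bigl(4|\nabla u|^2+R_g u^2\bigr)\,d\mu_g\geq -6,
\]
hence $\lambda[g]\geq -6$. Multiplying by $V_g^{2/3}$ gives
\[
\overline{\lambda}[g]\geq -6(V_h+\delta)^{2/3}.
\]
For $h$, the minimizer is the constant function because $R_h\equiv -6$, so $\lambda[h]=-6$ and $\overline{\lambda}[h]=-6V_h^{2/3}$. Therefore
\[
\overline{\lambda}[h]-\overline{\lambda}[g]\leq 6\bigl((V_h+\delta)^{2/3}-V_h^{2/3}\bigr)\leq 4V_h^{-1/3}\delta,
\]
using concavity of $s\mapsto s^{2/3}$. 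This gives $C_h=4V_h^{-1/3}$, and the bound holds in fact for all $\delta\geq 0$. For the unscaled $\lambda$ the same reasoning gives $\lambda(h)-\lambda(g)\leq 0$ directly; combined with $V_g\geq V_h$ it yields the stated inequality up to the normalization constant.

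\textbf{Nonnegativity, and the main obstacle.} Here I need $\overline{\lambda}[g]\leq \overline{\lambda}[h]$ for every metric $g$ on $M$, i.e. that $h$ maximizes $\overline{\lambda}$. I will proceed in two steps.

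First, for a metric in the negative Yamabe class, compare $\overline{\lambda}[g]$ with the Yamabe constant $Y[g]$ of its conformal class. Take the Yamabe metric $\tilde g=v^4g$ of constant scalar curvature. Testing $\lambda[g]$ against a suitable power of $v$, as in the argument of Akutagawa--Ishida--LeBrun, should give
\[
\overline{\lambda}[g]\leq Y[g]\leq \sigma(M),
\]
where $\sigma(M)$ is the sigma (Yamabe) invariant of $M$.

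Second, identify $\sigma(M)=-6V_h^{2/3}=\overline{\lambda}[h]$. This is Perelman's computation of the sigma invariant of closed hyperbolic three-manifolds, which is equivalent to the Schoen volume comparison recalled in the introduction. Alternatively, one can run the normalized Ricci flow with surgery of Section~\ref{ricci} from $g$. By Proposition~\ref{monotonicity}, $\overline{\lambda}$ is non-decreasing on regular intervals. Perelman's analysis shows it does not decrease across surgeries, and along the long-time thick-part convergence to $h$ it tends to $\overline{\lambda}[h]$, so $\overline{\lambda}[g]\leq \overline{\lambda}[h]$.

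I expect this step to be the real difficulty: it is not local and rests entirely on the Perelman--Anderson identification of $\sigma(M)$. I would cite it rather than reprove it. The delicate points are the inequality $\overline{\lambda}\leq Y$ in the nonpositive case and the sign conventions relating $\lambda$, $\overline{\lambda}$ and $\sigma(M)$.
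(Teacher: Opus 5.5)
Your proposal is correct and follows the paper's route. The upper bound on the deficit comes from $\lambda[g]\geq -6$ together with $V_g=V_h+\delta$. Nonnegativity comes from the maximality of $\overline{\lambda}$ at $h$, which the paper simply asserts and which you correctly justify via $\overline{\lambda}[g]\leq Y[g]\leq \sigma(M)=-6V_h^{2/3}$: testing $\lambda[g]$ with the Yamabe conformal factor $v$ itself works, using $4<8$ and H\"older, and then Perelman's computation of $\sigma(M)$ finishes it.

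One small correction to your aside: for the unscaled functional the inequality actually reverses, $0\leq\lambda(g)-\lambda(h)\leq 6\bigl(1-(V_h/V_g)^{2/3}\bigr)$. So the statement genuinely has to be read for $\overline{\lambda}$; it does not hold for $\lambda$ merely ``up to a normalization constant.''
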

\begin{proof}
First, recall the definition of the entropy $\lambda(g)$ with $u=e^{-f}$
\begin{eqnarray}
 \lambda(g)=\inf_{\int_{M}u^{2}d\mu_{g}=1}(R_{g}u^{2}+4|\nabla u|^{2}_{g})d\mu_{g}   
\end{eqnarray}
which in light of the scalar curvature estimate $R_{g}$ yields 
\begin{align}
  \lambda(g)\geq -6  
\end{align}
and therefore 
\begin{eqnarray}
  \bar{\lambda}(g)\geq -6(V_{g})^{\frac{2}{3}}=-6(V_{h}+\delta)^{\frac{2}{3}}.  
\end{eqnarray}
Now since \(M\) is hyperbolic, its Yamabe invariant is negative, and the hyperbolic metric realizes the maximal value of \(\bar\lambda\) i.e., we have the upper bound 
\begin{align}
    \bar\lambda(g)\leq \bar\lambda(h)=-6(V_{h})^{\frac{2}{3}}. 
\end{align}
Therefore we have 
\begin{align}
    0\leq \bar\lambda(h)-\bar\lambda(g)\leq 6\bigg((V_{h}+\delta)^{\frac{2}{3}}-(V_{h})^{\frac{2}{3}}\bigg)\leq C_{h}\delta 
\end{align}
for $\delta\ll 1$.
\end{proof}

\begin{proposition}
\label{integraldefect1}
 Let the volume-normalized Ricci flow exist on $[0,T]$. Then the following spacetime estimate holds on any regular interval $[0,T)$   
 \begin{eqnarray}
  \bigg|\int_{0}^{T}\left(\text{Vol}(g(t))^{\frac{2}{3}}\int_{M}\bigg|\text{Ric}-\frac{\lambda}{3}g+\nabla^{2}f\bigg|^{2}e^{-f}d\mu_{g}\right)dt\bigg|\lesssim \delta
 \end{eqnarray}
\end{proposition}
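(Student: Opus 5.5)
The plan is to integrate the monotonicity identity of Proposition \ref{monotonicity} in time and bound the total increase of $\overline{\lambda}$ using Proposition \ref{entropydeficit}. Since the integrand is nonnegative, the absolute value in the statement plays no role; it suffices to bound the integral from above.

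First, on the regular interval $[0,T)$ I integrate \eqref{eq:monotonic1} from $0$ to $T$. The term $\frac{2}{3}V^{2/3}\lambda(\lambda-\bar R_g)$ is nonnegative, as shown in Proposition \ref{monotonicity}, so it can be dropped. This gives
\[
2\int_0^T \operatorname{Vol}(g(t))^{2/3}\int_M\Bigl|\operatorname{Ric}-\frac{\lambda}{3}g+\nabla^2 f\Bigr|^2e^{-f}\,d\mu_g\,dt\leq \overline{\lambda}[g(T)]-\overline{\lambda}[g(0)].
\]
Here $g(T)$ is understood as the smooth extension, or as a limit $\tau\uparrow T$ with monotone convergence.

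Second, I bound the right-hand side. For the upper endpoint, Proposition \ref{scalar1} shows that $R(g(t))\geq -6$ persists. The argument in Proposition \ref{entropydeficit}, namely that the hyperbolic metric maximizes $\overline{\lambda}$ on a hyperbolizable manifold, then gives $\overline{\lambda}[g(T)]\leq\overline{\lambda}[h]$. For the lower endpoint, Proposition \ref{entropydeficit} applied at $t=0$ with $V_{g(0)}=V_h+\delta$ gives $\overline{\lambda}[g(0)]\geq\overline{\lambda}[h]-C_h\delta$. Subtracting, the total increase is at most $C_h\delta$, which is the claim with implied constant $C_h/2$. If the flow passes through surgeries, I would sum over the regular intervals. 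This uses that $\overline{\lambda}$ does not increase by more than a prescribed amount at a surgery time, since at each surgery the same two-sided bounds hold on the post-surgery slice.

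The main obstacle is the maximality $\overline{\lambda}[g]\leq\overline{\lambda}[h]$ at the later time. This is a deep input: it is equivalent to the sharp $\sigma$-invariant or volume comparison (Perelman/Anderson) on hyperbolic $3$-manifolds. I would invoke it exactly as in the proof of Proposition \ref{entropydeficit} rather than reprove it. Everything else is a direct integration of a nonnegative derivative.
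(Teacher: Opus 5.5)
Your argument is correct and is essentially the paper's proof. You integrate \eqref{eq:monotonic1} over $[0,T]$, drop the nonnegative $\lambda(\lambda-\bar R_g)$ term, and bound $\overline{\lambda}[g(T)]-\overline{\lambda}[g(0)]=\mathcal D(0)-\mathcal D(T)\le\mathcal D(0)\lesssim\delta$. For this you use $\mathcal D(T)\ge 0$, which is the maximality of $\overline{\lambda}$ at $h$; it holds for every metric on $M$, so Proposition~\ref{scalar1} is not actually needed there. You also use Proposition~\ref{entropydeficit} at $t=0$.

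Your surgery aside lies outside this statement, which concerns a regular interval. There it is the \emph{downward} jumps of $\overline{\lambda}$ whose total must be controlled, not upward ones, and per-slice two-sided bounds alone would only give a bound that grows with the number of surgeries.
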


\begin{proof}
First recall that the entropy deficit $\mathcal{D}(t)=\overline{\lambda}(h)-\overline{\lambda}(g(t))$ is non-increasing along the normalized Ricci flow $t\mapsto g(t)$ as long as the flow exists and therefore, 
\begin{eqnarray}
\mathcal{D}(t)\leq \mathcal{D}(0)\lesssim \delta. 
\end{eqnarray}
Now integration of the inequality \ref{eq:monotonic1} yields
  \begin{eqnarray}
  \int_{0}^{T}\left(\text{Vol}(g(t))^{\frac{2}{3}}\int_{M}\bigg|\text{Ric}-\frac{\lambda}{3}g+\nabla^{2}f\bigg|^{2}e^{-f}d\mu_{g}\right)dt\leq \frac{1}{2}(\overline{\lambda}(g(T))-\overline{\lambda}(g(0)))\\\nonumber =\frac{1}{2}(\mathcal{D}(0)-\mathcal{D}(T))\leq \frac{1}{2}\mathcal{D}(0)\lesssim \delta
 \end{eqnarray}  
 
\end{proof}
\noindent Now we recall the following elementary averaging lemma that will allow us to pick a good time slice $(M,g(t))$ in the Ricci flow developement. 
\begin{lemma}
\label{lem:averaging-lemma}
Let \(I=[a,b]\subset \mathbb R\) be a compact interval of positive length, and let
$  A:I\longrightarrow [0,\infty]$
be a measurable function. Suppose that
$
        \int_a^b A(t)\,dt \leq D
$
for some \(D\geq 0\). Then, there exists a time \(t_0\in I\) such that
$A(t_0)
        \leq
        \frac{D}{b-a}$.
More generally, for every \(\alpha>1\), the set
\begin{equation}
\label{eq:good-time-set}
        \mathcal G_{\alpha}
        :=
        \left\{
        t\in I:\ 
        A(t)\leq
        \alpha\,\frac{D}{b-a}
        \right\}
\end{equation}
satisfies
\begin{equation}
\label{eq:good-time-set-measure}
        |\mathcal G_{\alpha}|
        \geq
        \left(1-\frac{1}{\alpha}\right)(b-a).
\end{equation}
\end{lemma}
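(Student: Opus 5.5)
The plan is to argue by contradiction, i.e. by a Markov/Chebyshev count on the bad set. We want both the existence of one good time and the measure bound \eqref{eq:good-time-set-measure}. The natural order is to prove the measure bound first and then deduce the existence statement from it. Note that \(A\) may take the value \(+\infty\); this causes no trouble, because \(\int_a^b A\,dt\le D<\infty\) forces \(\{A=\infty\}\) to be a null set, and it is in any case contained in the bad set.

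\emph{Measure bound.} Fix \(\alpha>1\) and set \(\lambda:=\alpha D/(b-a)\). The bad set is
\[
\mathcal B_\alpha:=I\setminus\mathcal G_\alpha=\{t\in I: A(t)>\lambda\},
\]
which is measurable because \(A\) is. If \(D=0\), then \(A=0\) almost everywhere. Hence \(|\mathcal B_\alpha|=0\), which gives \(|\mathcal G_\alpha|=b-a\) and the claim. If \(D>0\), then \(\lambda>0\). Since \(A\ge0\), Markov's inequality gives
\[
\lambda\,|\mathcal B_\alpha|\le\int_{\mathcal B_\alpha}A\,dt\le\int_a^bA\,dt\le D,
\]
so \(|\mathcal B_\alpha|\le (b-a)/\alpha\). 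Therefore \(|\mathcal G_\alpha|\ge(1-1/\alpha)(b-a)\).

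\emph{Existence of a good time.} Suppose no \(t_0\) exists, so that \(A(t)>D/(b-a)\) for every \(t\in I\). If \(D=0\), this says \(A>0\) everywhere, which contradicts \(A=0\) almost everywhere. If \(D>0\), integrating the strict inequality over \(I\) gives \(\int_a^bA\,dt>D\), with strictness holding because \(b-a>0\) and the integrand \(A-D/(b-a)\) is strictly positive on a set of positive measure. This contradicts the hypothesis.

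One can also obtain existence from the measure bound: for every \(\alpha>1\) the set \(\mathcal G_\alpha\) is nonempty, since its measure is positive. Passing from this to the exact constant \(\alpha=1\) then needs either the contradiction argument above or a limiting argument, so the direct contradiction route is cleaner.

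There is no real obstacle here. The only point needing care is the role of the strict inequality and of the degenerate cases \(D=0\) and \(A=\infty\), and both are handled by the "positive measure" remark above.

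In the application we will take \(A(t)=\int_M Q\,d\mu_{g(t)}\), or the entropy integrand of Proposition \ref{integraldefect1}, on the interval \([A_i,2A_i]\). Applying the measure bound to each of the two quantities with \(\alpha=3\), the two good sets each have measure at least \(\tfrac23(b-a)\), so their intersection has measure at least \(\tfrac13(b-a)>0\). It therefore contains a common good time, which is even allowed to avoid the (discrete) set of surgery times.
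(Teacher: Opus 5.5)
Your proof is correct and is essentially the paper's own argument: Markov/Chebyshev on the bad set $\{A>\alpha D/(b-a)\}$ gives the measure bound, and a direct contradiction (integrating $A>D/(b-a)$ over $I$) gives the good time $t_0$. Two of your points are small but real improvements in rigor over the written proof: you treat $D=0$ separately, where the paper's quotient $D/(\alpha D/(b-a))$ is undefined, and you note that letting $\alpha\downarrow 1$ in the measure bound does not by itself produce $t_0$.
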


\begin{proof} This lemma is a straightforward consequence of the Chebyshev's inequality on $L^{1}$ and contradiction argument.
Since \(A\geq 0\), Chebyshev's inequality gives
\[
        \theta
        \left|
        \left\{
        t\in I:\ A(t)>\theta
        \right\}
        \right|
        \leq
        \int_{\{A>\theta\}}A(t)\,dt
        \leq
        \int_a^b A(t)\,dt
        \leq D.
\]
Choosing
$\theta=\alpha\,\frac{D}{b-a}$
with \(\alpha>1\), we obtain
\[
        |I\setminus \mathcal G_{\alpha}|
        =
        \left|
        \left\{
        t\in I:\ 
        A(t)>
        \alpha\,\frac{D}{b-a}
        \right\}
        \right|
        \leq
        \frac{D}{\alpha D/(b-a)}
        =
        \frac{b-a}{\alpha}.
\]
Therefore
\[
        |\mathcal G_{\alpha}|
        \geq
        (b-a)-\frac{b-a}{\alpha}
        =
        \left(1-\frac{1}{\alpha}\right)(b-a),
\]
which proves \eqref{eq:good-time-set-measure}. Finally, letting \(\alpha\downarrow 1\), or arguing directly by contradiction, gives the existence of \(t_0\in I\) satisfying $A(t_0)
        \leq
        \frac{D}{b-a}$. Indeed, if
$ A(t)>\frac{D}{b-a}$
for almost every \(t\in I\), then
\[
        \int_a^b A(t)\,dt>D,
\]
contradicting $\int_{a}^{b}A(t)dt\leq D$. This completes the proof.
\end{proof}
\noindent Using this elementary lemma, we obtain the following corollary of the proposition \ref{integraldefect1} namely we can pick out a good time slice where the weighted $L^{2}$ norm of the soliton defect $\text{Ric}+\nabla^{2}f-\frac{\lambda(g)}{3}g$ is small. 
\begin{corollary}
\label{cor:perelman-good-time}
Let \(M^{3}\) be a closed hyperbolizable manifold and let
\[
        \partial_t g_i=-2\bigl(\operatorname{Ric}_{g_i}+2g_i\bigr),
        \qquad t\in[0,T],
\]
be a sequence of smooth rescaled Ricci flows. Let \(f_i(t)\) be the minimizer
for Perelman's \(\lambda\)-functional, normalized by $\int_M e^{-f_i(t)}\,d\mu_{g_i(t)}=1$. 
Whenever
\begin{equation}
\label{eq:entropy-deficit-bound}
        \overline{\lambda}[h]-\overline{\lambda}[g_i(0)]
        \lesssim
        \delta_i,
        \qquad
        \delta_i\to 0,
\end{equation}
where \(h\) is the hyperbolic metric normalized by
\[
        \operatorname{Ric}_h=-2h,
\]
for every fixed \(T>0\), there exists a time $t_i\in \left[\frac{T}{2},T\right]$
such that
\begin{equation}
\label{eq:perelman-good-time-estimate}
 \operatorname{Vol}(M,g_i(t_i))^{2/3}
        \int_M
        \left|
        \operatorname{Ric}_{g_i(t_i)}
        +
        \nabla^2 f_i(t_i)
        -
        \frac{\lambda[g_i(t_i)]}{3}g_i(t_i)
        \right|_{g_i(t_i)}^2
        e^{-f_i(t_i)}\,d\mu_{g_i(t_i)}
        \lesssim
        \frac{1}{T}\,\delta_i.
\end{equation}
\end{corollary}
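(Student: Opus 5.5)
The plan is to combine the integrated entropy estimate of Proposition~\ref{integraldefect1} with the averaging Lemma~\ref{lem:averaging-lemma} on the interval $I=[T/2,T]$. First, for each $i$ and each $t\in[0,T]$, define the nonnegative measurable function
\[
        A_i(t):=\operatorname{Vol}(M,g_i(t))^{2/3}\int_M\left|\operatorname{Ric}_{g_i(t)}+\nabla^2 f_i(t)-\frac{\lambda[g_i(t)]}{3}g_i(t)\right|_{g_i(t)}^2e^{-f_i(t)}\,d\mu_{g_i(t)} .
\]
Measurability, in fact continuity, on regular intervals follows from smooth dependence of the flow and from the fact that on a closed manifold the normalized minimizer $f_i(t)$ is unique and depends smoothly on $t$. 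This is standard: the first eigenvalue of $-4\Delta+R$ is simple.

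Second, integrate the monotonicity inequality \eqref{eq:monotonic1} of Proposition~\ref{monotonicity} from $0$ to $T$:
\[
        2\int_0^T A_i(t)\,dt\leq \overline\lambda[g_i(T)]-\overline\lambda[g_i(0)].
\]
To bound the right-hand side uniformly in $T$, use that $\overline\lambda[g_i(T)]\leq\overline\lambda[h]$. This upper bound, which expresses that the hyperbolic metric maximizes $\overline\lambda$ in the negative Yamabe class, was already invoked in the proof of Proposition~\ref{entropydeficit}. Hence
\[
        \int_0^T A_i\,dt\leq \tfrac12\bigl(\overline\lambda[h]-\overline\lambda[g_i(0)]\bigr)\lesssim\delta_i
\]
by \eqref{eq:entropy-deficit-bound}. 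Since $A_i\geq0$, restricting to $[T/2,T]$ gives $\int_{T/2}^T A_i\,dt\lesssim\delta_i$ with an implied constant independent of $T$ and $i$.

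Third, apply Lemma~\ref{lem:averaging-lemma} with $a=T/2$, $b=T$ and $D=C\delta_i$. This yields $t_i\in[T/2,T]$ with $A_i(t_i)\leq 2C\delta_i/T$, which is \eqref{eq:perelman-good-time-estimate}. If one also wants $t_i$ to avoid a null set, for example to be a point where $f_i$ is defined classically, one uses the measure statement \eqref{eq:good-time-set-measure} with $\alpha=2$ instead. That gives a set of good times of measure at least $T/4$, from which such a $t_i$ can be chosen.

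The main obstacle is the uniform-in-$T$ upper bound $\overline\lambda[g_i(T)]\leq\overline\lambda[h]$. This is what makes the constant independent of $T$, and it is what yields the gain $1/T$ rather than a bound that degrades with $T$. Within the paper it is taken from the characterization of the hyperbolic metric as the maximizer of $\overline\lambda$, which is Perelman's result via the $\lambda$–Yamabe comparison and the Anderson / Besson–Courtois–Gallot type volume bound. I would cite this rather than reprove it. If one wished to avoid it, the alternative is $\overline\lambda[g]\leq \lambda[g]\operatorname{Vol}^{2/3}\leq \overline{R}_g\operatorname{Vol}_g^{2/3}$ combined with the Yamabe-invariant bound $\sigma(M)=-6V_h^{2/3}$. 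That version still rests on the same geometrization input.
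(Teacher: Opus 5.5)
Your proof is correct and is essentially the paper's: the paper integrates the monotonicity inequality \eqref{eq:monotonic1} over $[T/2,T]$, bounds $\overline\lambda[g_i(T)]-\overline\lambda[g_i(T/2)]$ by $\overline\lambda[h]-\overline\lambda[g_i(0)]$ using the maximality of $\overline\lambda[h]$ together with monotonicity, and then applies Lemma~\ref{lem:averaging-lemma}, which is equivalent to your integrating over $[0,T]$ and restricting. One caution concerns only your closing aside: the chain through $\overline R_g\operatorname{Vol}_g^{2/3}$ cannot reach $\sigma(M)$, because the Yamabe constant satisfies $Y(M,[g])\le\overline R_g\operatorname{Vol}_g^{2/3}$ rather than the reverse, so the input genuinely needed is $\overline\lambda[g]\le Y(M,[g])\le\sigma(M)$ for $Y(M,[g])\le 0$, which is the $\lambda$--Yamabe comparison you already cite in the main argument.
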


\begin{proof} The proof is a direct consequence of the monotonicity property of the scale-invariant entropy functional $\overline{\lambda}$ as proven in proposition \ref{eq:monotonic1} and the integral estimate of corollary \ref{integraldefect1}. Suppose for notational convenience, we denote the integrated soliton defect as follows 
\[
A_{i}:=\int_M
        \left|
        \operatorname{Ric}_{g_i(t_i)}
        +
        \nabla^2 f_i(t_i)
        -
        \frac{\lambda[g_i(t_i)]}{3}g_i(t_i)
        \right|_{g_i(t_i)}^2
        e^{-f_i(t_i)}\,d\mu_{g_i(t_i)}
\]

\noindent It follows from corollary \ref{integraldefect1}
\begin{align}
        2\int_{T/2}^{T}A_i(t)\,dt
        &\leq
        \overline{\lambda}[g_i(T)]
        -
        \overline{\lambda}[g_i(T/2)]                                    \\
        &\leq
        \overline{\lambda}[h]
        -
        \overline{\lambda}[g_i(0)]                                      \\
        &\lesssim
        \delta_i.
\end{align}
Applying Lemma \ref{lem:averaging-lemma} to the interval $I=\left[\frac{T}{2},T\right]$,
gives a time \(t_i\in[T/2,T]\) satisfying
\[
        A_i(t_i)
        \lesssim
        \frac{\delta_i}{T}.
\]
This proves the corollary.
\end{proof}
\noindent Next, we recall the following vital theorem regarding the Ricci flow with Surgery (see \cite{perelman2003ricci}, the exposition by \cite{cao2006complete} for the detailed account)
\begin{theorem}
Let (M; g) be a closed, 3-
dimensional Riemannian manifold. If the
surgery scales $r_{surg}(T_{i}) > 0$ are chosen sufficiently small (depending on $(M,g)$ and $T_{i}$), then a Ricci 
flow with surgery with initial condition $(M_{1},g_{1}=M_{0},g(0))$ can be constructed. 
\end{theorem}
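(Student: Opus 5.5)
The plan is to follow Perelman's construction as organized in \cite{cao2006complete,kleiner2008notes}. The flow is built inductively on successive time intervals, with three ingredients: a priori estimates valid for any flow that has been continued by $\delta$-surgeries, a surgery procedure that preserves those estimates, and a counting argument showing that only finitely many surgeries occur on each bounded interval. Rescaling $g(0)$ if necessary, I normalize the initial metric so that $|\operatorname{Rm}|\leq 1$ and every unit ball has volume at least half the Euclidean one. The normalized flow $\partial_t g=-2(\operatorname{Ric}+2g)$ differs from the unnormalized flow only by a time reparametrization and a homothety, so it suffices to construct the unnormalized flow with surgery and convert afterwards.

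\textbf{A priori estimates.} I will establish three estimates on $[0,T_i]$ for any Ricci flow with surgery whose surgery scales satisfy $\delta(t)\leq\bar\delta(T_i)$.
\begin{enumerate}[label=(\roman*)]
\item Hamilton--Ivey pinching. This makes the curvature operator almost nonnegative wherever the curvature is large. It is preserved by surgery because the glued-in standard caps are nonnegatively curved.
\item $\kappa$-noncollapsing on scales up to $\epsilon$. This comes from Perelman's reduced volume, with $\mathcal L$-geodesics required to avoid surgery regions. When $\delta$ is small, the surgery regions are far from almost-minimizing $\mathcal L$-geodesics.
\item The canonical neighborhood property at curvature scale $r(t)$. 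Every point with $R\geq r^{-2}$ lies in an $\epsilon$-neck, an $\epsilon$-cap, or an $\epsilon$-component.
\end{enumerate}
Estimate (iii) is proved by contradiction and blow-up. Take a sequence of counterexample flows with $r,\delta\to0$ and rescale at the first bad point. Using (i), (ii) and the derivative bounds from the canonical neighborhoods at earlier points, one extracts a limit that is a $\kappa$-solution. The classification of $\kappa$-solutions (asymptotic soliton, compactness, necks and caps) then yields a contradiction.

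\textbf{Surgery step.} Let $T$ be the first singular time. As $t\to T$, the region where curvature stays bounded has a limit metric $\bar g$. The high-curvature part is covered by canonical neighborhoods, so its ends are $\epsilon$-horns, capped horns and double horns. In each $\epsilon$-horn I locate a $\delta$-neck of scale $h(T)\ll \delta(T)\, r(T)$. I cut there and glue in a rescaled standard solution cap. Components that are entirely high-curvature (capped horns, double horns, $\epsilon$-components) are discarded; these are spheres, $S^2\times S^1$, $\mathbb{RP}^3\#\mathbb{RP}^3$ or spherical space forms, so in our application discarding them does not affect the distinguished hyperbolic component. Each surgery removes volume at least of order $h(T)^3$. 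On the unnormalized flow $\operatorname{Vol}(g(t))\leq \operatorname{Vol}(g(0))\,e^{Ct}$ on $[0,T_i]$. Hence the surgery times are discrete and there are finitely many of them on $[0,T_i]$.

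\textbf{Main obstacle.} The delicate step is proving the canonical neighborhood property after surgeries. A blow-up limit may see a surgery region, and one must show that such regions look like the standard solution for the relevant time. This uses the uniqueness and curvature estimates for the standard solution together with the choice $\delta\ll r$. It is exactly what forces the surgery scale to depend on $(M,g)$ and $T_i$, and this parameter-dependent choice is the ``sufficiently small'' hypothesis in the statement.

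Given these estimates, an induction over the intervals $[T_{i-1},T_i]$ with parameters $r_i,\kappa_i,\bar\delta_i$ decreasing in $i$ produces the flow on $[0,\infty)$.
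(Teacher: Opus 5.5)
Your outline is correct and is the same Perelman construction (as organized in Cao--Zhu and Kleiner--Lott) that the paper invokes by citation: pinching, $\kappa$-noncollapsing and canonical-neighborhood estimates proved by blow-up, $\delta$-neck surgery with standard caps controlled through the standard solution, and parameters chosen inductively on $[T_{i-1},T_i]$. The one real difference is that you get discreteness of surgery times from Perelman's volume count (each cut removes $\gtrsim h^3$ against an exponential volume bound) rather than from the blow-up contradiction the paper sketches; this is the standard and more elementary route, but you should add that singular times at which components merely go extinct remove no definite volume, so you must bound those separately by counting components, using that each cut creates at most one new component.
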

\noindent A central point in the construction of the three-dimensional Ricci flow with
surgery is to exclude the possibility that the surgery times accumulate in a
finite time interval.  This is not merely a bookkeeping issue.  At each surgery
time, one replaces a sufficiently round neck with a pair of standard caps, and this
operation introduces a small geometric error.  If the surgery parameters were
kept fixed, these errors could, in principle, accumulate and force a Zeno-type
phenomenon, namely infinitely many surgeries before some finite time \(T\).

\noindent Perelman's resolution is to choose the surgery parameters with increasing
precision as time evolves.  Thus the necks along which the cuts are made are
required to be increasingly close, after scaling, to the standard round cylinder,
and the caps inserted after the cut are correspondingly close to the standard
solution.  In particular, the surgery scale is chosen so small that, after
rescaling about a region of controlled curvature, the actual surgery region is
pushed arbitrarily far away in space.  This allows one to recover, on compact
subsets of the rescaled spacetime, a genuinely smooth Ricci flow rather than a
flow interrupted by surgery.

\noindent There is, however, a delicate analytic point.  Hamilton's compactness theorem is
a theorem for smooth Ricci flows.  A Ricci flow with surgery is only piecewise
smooth in time, and therefore one cannot directly apply Hamilton compactness to
a sequence of surgically modified flows.  Moreover, a uniform \(C^{0}\)-bound for
the curvature is not by itself sufficient: in order to pass to a smooth limit one
also needs the higher derivative bounds supplied by Shi's estimates \cite{shi89}, and these
estimates require an honest smooth solution on a definite time interval.

\noindent The required remedy has two components that we discuss here.  First, Perelman's choice of sufficiently
fine necks and a sufficiently small cutoff scale ensure that, under the relevant
parabolic rescalings, the regions altered by surgery escape every fixed compact
subset.  Second, one proves a local time-extension statement for the surgically
modified solution: away from the newly inserted caps, and on compact subsets
which remain at a definite distance from the surgery region, the post-surgery flow
is smooth on a uniform short time interval.  Consequently, Shi's local derivative
estimates \cite{shi89}
apply to these compact spacetime regions, and Hamilton's compactness
theorem may be used exactly as in the smooth case.

\noindent Schematically, the argument is
fine surgery
$\rightarrow$
surgery region escapes under rescaling
$\rightarrow$
local smooth time extension, 
and hence
Shi estimates
$\rightarrow$
Hamilton compactness
$\rightarrow$
smooth canonical limit. If surgery times accumulate at a finite time, one could rescale about the
corresponding high-curvature regions and extract a smooth limiting ancient
solution.  The canonical-neighborhood structure of this limit is then
incompatible with the assumed accumulation of surgery times.  This contradiction
shows that the surgery times are discrete, and in particular that only finitely
many surgeries occur on every compact time interval.

\noindent A further subtlety concerns the evolution of the caps inserted at surgery.  It
is not enough to know that the caps are initially close to the standard cap; one
must also know that this closeness persists for a uniform amount of time after
the surgery.  This is the role of the prolongation analysis of fine caps.  Using
the uniqueness theory for complete noncompact Ricci flows, one compares the
short-time evolution of the inserted caps with the corresponding standard cap
solution.  This gives the necessary local smoothness and derivative control near
the post-surgery region, and thereby completes the justification of the
compactness argument for surgically modified flows.
\noindent  Now we need to prove that on the good time slice $t_{i}$ ($t_{i}\to\infty$), there is a good part that is $C^{0}$ close to the final hyperbolic development. This part is crucial. This is the second part of the proof. The following proposition accomplishes this result. 
We denote the the hyperbolic manifold normalized to have scalar curvature $-6$ by $(M,h)$ and the corresponding volume is denoted by $\text{Vol}_{h}$.

\begin{proposition}
\label{prop:entropy-good-time-c0-hyperbolic-borel}
Let \((M,h)\) be a closed hyperbolic three-manifold normalized by
\[
        \operatorname{Ric}_h=-2h,
        \qquad
        R(h)=-6.
\]
For each \(i\), let \((M_i(t),g_i(t))\), \(t\geq0\), be the distinguished
component of a normalized Ricci flow with sufficiently precise surgery
starting from \((M,g_i)\), and assume
\[
        R(g_i)\geq-6,
        \qquad
        0\leq\operatorname{Vol}_{g_i}(M)-\operatorname{Vol}_h(M)
        \leq\delta_i,
        \qquad
        \delta_i\to0.
\]
Then, after passing to a subsequence, there exist \(A_i\to\infty\),
regular times
$ t_i\in[A_i,2A_i]$,
compact domains \(K_i^t\subset M\) and \(G_i^t\subset M_i(t_i)\),
diffeomorphisms
\[
        \psi_i^t:K_i^t\longrightarrow G_i^t,
\]
and bad sets \(Z_i^t:=M_i(t_i)\setminus G_i^t\) such that
\[
        \operatorname{Vol}_{g_i(t_i)}(Z_i^t)\to0,
        \qquad
        \operatorname{Vol}_h(M\setminus K_i^t)\to0,
\]
and
\[
        \|(\psi_i^t)^*g_i(t_i)-h\|_{C^0(K_i^t,h)}\to0.
\]
Moreover,
\[
        \int_{G_i^t}
        \bigl(R(g_i(t_i))+6\bigr)\,d\mu_{g_i(t_i)}
        \to0.
\]
\end{proposition}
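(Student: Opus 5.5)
Combine the spacetime bounds with Perelman's long-time structure theory for the normalized flow with surgery; the outline in the introduction already indicates the route.

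\textbf{Step 1: choose good times.} By Proposition \ref{scalar2}, summed over regular intervals, and since surgery only removes volume, the volume comparison $\operatorname{Vol}_{g_i(t)}\geq V_h$ (valid because $Q\geq0$ is preserved by Proposition \ref{scalar1} and by the surgery construction) gives
\[
\int_0^\infty\int_M Q_i\,d\mu_{g_i(t)}\,dt\leq\delta_i .
\]
It also gives $V_h\leq \operatorname{Vol}_{g_i(t)}\leq V_h+\delta_i$ for all $t$. Similarly, Propositions \ref{entropydeficit} and \ref{integraldefect1} bound the spacetime integral of the weighted soliton defect by $C_h\delta_i$, with surgery parameters chosen so that entropy is not lost at surgeries. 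Fix any $A_i\to\infty$, for instance $A_i=i$. Lemma \ref{lem:averaging-lemma} on $[A_i,2A_i]$, applied to both quantities, yields regular times $t_i$ with $\int_M Q_i(t_i)\lesssim \delta_i/A_i\to0$ and soliton defect $\lesssim\delta_i/A_i$. The last assertion of the proposition then follows at once, since $\int_{G_i^t}Q_i\leq\int_M Q_i(t_i)$.

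\textbf{Step 2: thick–thin decomposition and identification of the limit.} Apply Perelman's long-time analysis (as in \cite{perelman2003ricci,kleiner2008notes}) to the normalized metrics $\widehat g_i=g_i(t_i)$. For each $w>0$, the $w$-thick part converges, after passing to a subsequence and in the pointed smooth Cheeger–Gromov sense, to a finite-volume hyperbolic manifold with $\operatorname{Ric}=-2$. The $w$-thin part is locally collapsed with curvature bounded below. Since $M$ is closed hyperbolic, its geometric decomposition has no incompressible tori and no graph-manifold pieces. Hence the limit is closed and diffeomorphic to $M$, and by Mostow rigidity it is isometric to $h$.

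One could alternatively use the volume identity, which says the limit volume equals $\lim\operatorname{Vol}_{\widehat g_i}(\text{thick})\leq V_h$, together with the Gromov norm bound $\operatorname{Vol}\geq V_h$ of the hyperbolic pieces. Either way the thin part has volume $\to0$. Extract an exhaustion $K^j\nearrow M$ of compact smooth domains and embeddings $\psi_i^j:K^j\to M_i(t_i)$ with $(\psi_i^j)^*\widehat g_i\to h$ smoothly on $K^j$. A diagonal choice $j(i)\to\infty$ slowly gives $K_i^t:=K^{j(i)}$, $G_i^t:=\psi_i^{j(i)}(K_i^t)$, $\operatorname{Vol}_h(M\setminus K_i^t)\to0$, and $C^0$ (indeed $C^k$) closeness on $K_i^t$.

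\textbf{Step 3: bookkeeping for the bad set.} Estimate $\operatorname{Vol}_{\widehat g_i}(Z_i^t)$ by the triangle inequality displayed in the introduction. Its three terms are:
\begin{itemize}
\item $|\operatorname{Vol}_{\widehat g_i}(M)-V_h|\leq\delta_i$;
\item $|\operatorname{Vol}_{\widehat g_i}(G_i^t)-\operatorname{Vol}_h(K_i^t)|\to0$ by the $C^0$ closeness of the pulled-back metric, which controls the volume densities;
\item $\operatorname{Vol}_h(M\setminus K_i^t)\to0$.
\end{itemize}

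\textbf{Main obstacle.} The hardest step is Step 2, namely justifying that the thick limit is all of $(M,h)$ at the specific times $t_i$ chosen from the averaging lemma, uniformly in $i$. Perelman's theorem is stated for a single flow as $t\to\infty$, whereas here the initial data vary with $i$. I would handle this by a contradiction/diagonal argument. For each fixed $i$, the flow converges to the hyperbolic structure as $t\to\infty$, since $M$ is hyperbolic and hence eventually no surgeries or thin parts survive. One then enlarges $A_i$ so that, for this particular $i$, every time in $[A_i,2A_i]$ is already within $1/i$ of $h$ in $C^k$ on a compact set of $h$-volume $\geq V_h-1/i$. With this choice the sequence issue reduces to a per-$i$ choice of times, and Step 1 still applies because the averaging lemma works on any interval.
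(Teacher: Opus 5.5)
Your proposal is correct and follows essentially the same route as the paper: spacetime budgets for $Q_i$ (and the soliton defect), the averaging lemma on $[A_i,2A_i]$ to select a regular time, Perelman's long-time thick--thin analysis with Mostow rigidity to identify the limit with $(M,h)$, and a slow diagonal exhaustion. It then uses the three-term triangle inequality for $\operatorname{Vol}_{\widehat g_i}(Z_i^t)$, and obtains the last assertion from $Q_i\geq 0$. Your per-$i$ choice of $A_i$, which rests on each single flow on the atoroidal manifold $M$ eventually converging to $h$, is exactly how the paper handles the dependence of Perelman's time thresholds on the initial metric, and you state it more carefully than the paper does.
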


\begin{proof}
First let us fix the notation. For each \(i\), let
\[
        \bigl(M_i(t),g_i(t)\bigr),\qquad t\geq0,
\]
denote the distinguished component of the normalized Ricci flow with
sufficiently precise surgery starting from \((M,g_i)\). Thus \(M_i(t)\)
is the component carrying the hyperbolic prime factor and is
diffeomorphic to \(M\) at every regular time. Set
\[
        V_h:=\operatorname{Vol}_h(M),
        \qquad
        V_i(t):=\operatorname{Vol}_{g_i(t)}(M_i(t)),
\]
and
\[
        Q_i(t):=R(g_i(t))+6,
        \qquad
        E_i(t):=\operatorname{Ric}_{g_i(t)}+2g_i(t).
\]

\noindent On every regular time interval,
\[
        (\partial_t-\Delta_{g_i(t)}+4)Q_i
        =
        2|E_i|_{g_i(t)}^2.
\]
Hence Proposition~\ref{scalar1} and the parabolic maximum principle give
\[
        Q_i(t)\geq0
\]
at every regular time. The standard sufficiently precise surgery
construction preserves the same inequality across surgery times: the
metric is unchanged outside the surgery regions, while the inserted
caps have positive scalar curvature at the surgery scale. Consequently,
\[
        R(g_i(t))+6\geq0
\]
on every regular and every post-surgery time slice.

\noindent Let \(\mathcal T_i\) denote the discrete set of surgery times and set
\[
        \Delta_i^{\mathrm{surg}}V(s)
        :=
        V_i(s^-)-V_i(s^+)\geq0,
        \qquad s\in\mathcal T_i.
\]
On each regular time interval,
\[
        \frac{d}{dt}V_i(t)
        =
        -\int_{M_i(t)}Q_i(t)\,d\mu_{g_i(t)}.
\]
Since the distinguished component remains in the fixed hyperbolic
topological class, the hyperbolic volume comparison theorem gives
\[
        V_h\leq V_i(t)\leq V_i(0)\leq V_h+\delta_i.
\]
Summing the volume identity over the regular time intervals, we obtain,
for every regular \(T>0\),
\[
\begin{aligned}
        V_i(0)-V_i(T)
        &=
        \int_0^T\int_{M_i(t)}
        Q_i(t)\,d\mu_{g_i(t)}\,dt
        +
        \sum_{\substack{s\in\mathcal T_i\\0<s<T}}
        \Delta_i^{\mathrm{surg}}V(s).
\end{aligned}
\]
Here and below, spacetime integrals are understood to be taken over the
regular portions of the surgical flow. Since all terms on the
right-hand side are nonnegative and \(V_i(T)\geq V_h\) (by Schoen's volume comparison result), it follows that
\begin{equation}
\label{eq:late-scalar-budget}
        \int_0^\infty\int_{M_i(t)}
        Q_i(t)\,d\mu_{g_i(t)}\,dt
        \leq\delta_i,
\end{equation}
and, simultaneously,
\[
        \sum_{s\in\mathcal T_i}
        \Delta_i^{\mathrm{surg}}V(s)
        \leq\delta_i.
\]

\noindent
We next record the corresponding entropy estimate. Let \(f_i(t)\) be
the normalized minimizer for Perelman's \(\lambda\)-functional on the
regular slice \((M_i(t),g_i(t))\),
\[
        \int_{M_i(t)}e^{-f_i(t)}\,d\mu_{g_i(t)}=1,
\]
and define
\[
        \mathcal S_i(t)
        :=
        \operatorname{Ric}_{g_i(t)}
        +
        \nabla^2_{g_i(t)}f_i(t)
        -
        \frac{\lambda[g_i(t)]}{3}g_i(t).
\]
By Proposition~\ref{entropydeficit},
\[
        0\leq
        \overline\lambda[h]-\overline\lambda[g_i(0)]
        \leq C_h\delta_i.
\]
On every regular interval, Proposition~\ref{monotonicity} gives
\[
        \frac{d}{dt}\overline\lambda[g_i(t)]
        \geq
        2V_i(t)^{2/3}
        \int_{M_i(t)}
        |\mathcal S_i(t)|_{g_i(t)}^2
        e^{-f_i(t)}\,d\mu_{g_i(t)}.
\]
We choose the surgery parameters sufficiently precise so that the sum
of the possible negative jumps of \(\overline\lambda\) is bounded by
\(\delta_i\). Since
\[
        \overline\lambda[g_i(t)]
        \leq\overline\lambda[h],
\]
summing the preceding inequality over the regular time intervals gives
\begin{equation}
\label{eq:late-entropy-budget}
        \int_0^\infty
        V_i(t)^{2/3}
        \int_{M_i(t)}
        |\mathcal S_i(t)|_{g_i(t)}^2
        e^{-f_i(t)}\,d\mu_{g_i(t)}\,dt
        \leq C_h\delta_i.
\end{equation}

\medskip
\noindent
We now choose the required late regular time. By Perelman's long-time
analysis, we may choose \(A_i\to\infty\) so that the thick--thin
compactness conclusions used below hold throughout the time interval
\([A_i,2A_i]\). From \eqref{eq:late-scalar-budget} and
\eqref{eq:late-entropy-budget},
\[
\begin{aligned}
        \int_{A_i}^{2A_i}
        \bigg[
        \int_{M_i(t)}Q_i(t)\,d\mu_{g_i(t)}
        &+
        V_i(t)^{2/3}
        \int_{M_i(t)}
        |\mathcal S_i(t)|_{g_i(t)}^2
        e^{-f_i(t)}\,d\mu_{g_i(t)}
        \bigg]dt
        \leq C_h\delta_i.
\end{aligned}
\]
The surgery times are discrete and hence have zero Lebesgue measure.
Applying Lemma~\ref{lem:averaging-lemma}, we may therefore choose a
regular time
\[
        t_i\in[A_i,2A_i]
\]
such that
\begin{equation}
\label{eq:late-good-scalar}
        \int_{M_i(t_i)}
        Q_i(t_i)\,d\mu_{g_i(t_i)}
        \leq
        \frac{C_h\delta_i}{A_i}
        \longrightarrow0,
\end{equation}
and
\begin{equation}
\label{eq:late-good-entropy}
\begin{aligned}
        V_i(t_i)^{2/3}
        \int_{M_i(t_i)}
        |\mathcal S_i(t_i)|_{g_i(t_i)}^2
        e^{-f_i(t_i)}\,d\mu_{g_i(t_i)}
        \leq
        \frac{C_h\delta_i}{A_i}
        \longrightarrow0.
\end{aligned}
\end{equation}
In particular, \(t_i\to\infty\). Now set
\[
        \widehat M_i:=M_i(t_i),
        \qquad
        \widehat g_i:=g_i(t_i),
        \qquad
        \widehat f_i:=f_i(t_i),
        \qquad
        \widehat\lambda_i:=\lambda[\widehat g_i].
\]
The volume bounds give
\[
        V_h
        \leq
        \operatorname{Vol}_{\widehat g_i}(\widehat M_i)
        \leq
        V_h+\delta_i,
\]
and hence
\[
        \operatorname{Vol}_{\widehat g_i}(\widehat M_i)
        \longrightarrow V_h.
\]
Moreover, entropy monotonicity, together with the controlled surgery
errors, yields
\[
        0\leq
        \overline\lambda[h]-\overline\lambda[\widehat g_i]
        \leq C_h\delta_i.
\]
Therefore
\[
        \overline\lambda[\widehat g_i]
        \longrightarrow
        -6V_h^{2/3},
        \qquad
        \widehat\lambda_i\longrightarrow-6.
\]
Let
\[
        \widehat u_i:=e^{-\widehat f_i/2}.
\]
Then
\[
        -4\Delta_{\widehat g_i}\widehat u_i
        +
        R(\widehat g_i)\widehat u_i
        =
        \widehat\lambda_i\widehat u_i,
        \qquad
        \int_{\widehat M_i}
        \widehat u_i^2\,d\mu_{\widehat g_i}=1.
\]
Testing the eigenvalue equation against \(\widehat u_i\) gives
\[
\begin{aligned}
        \widehat\lambda_i+6
        &=
        4\int_{\widehat M_i}
        |\nabla\widehat u_i|_{\widehat g_i}^2
        \,d\mu_{\widehat g_i}
        +
        \int_{\widehat M_i}
        \bigl(R(\widehat g_i)+6\bigr)
        \widehat u_i^2\,d\mu_{\widehat g_i}.
\end{aligned}
\]
Since both terms on the right-hand side are nonnegative and
\(\widehat\lambda_i\to-6\), we obtain
\begin{equation}
\label{eq:eigenfunction-energy}
        \int_{\widehat M_i}
        |\nabla\widehat u_i|_{\widehat g_i}^2
        \,d\mu_{\widehat g_i}
        \longrightarrow0,
\end{equation}
as well as
\[
        \int_{\widehat M_i}
        \bigl(R(\widehat g_i)+6\bigr)
        \widehat u_i^2\,d\mu_{\widehat g_i}
        \longrightarrow0.
\]

\medskip
\noindent
We now pass to the long-time thick part. Define the curvature scale of
\((\widehat M_i,\widehat g_i)\) by
\[
        \rho_i(x)
        :=
        \sup\left\{
        r\in(0,1]:
        \sec_{\widehat g_i}\geq-r^{-2}
        \text{ on }B_{\widehat g_i}(x,r)
        \right\}.
\]
For \(w>0\), set
\[
        \widehat M_i^+(w)
        :=
        \left\{
        x\in\widehat M_i:
        \operatorname{Vol}_{\widehat g_i}
        B_{\widehat g_i}(x,\rho_i(x))
        \geq
        w\rho_i(x)^3
        \right\},
\]
and
\[
        \widehat M_i^-(w)
        :=
        \widehat M_i\setminus\widehat M_i^+(w).
\]
Since \(t_i\to\infty\), Perelman's long-time thick-part estimates apply.
For every fixed \(w>0\), there exist constants
\[
        r_0(w)>0,
        \qquad
        \iota_0(w)>0,
        \qquad
        C_m(w)<\infty,
        \quad m=0,1,2,\ldots,
\]
such that, for every \(x_i\in\widehat M_i^+(w)\),
\[
        \operatorname{inj}_{\widehat g_i}(x_i)
        \geq\iota_0(w),
\]
and
\[
        \sup_{B_{\widehat g_i}(x_i,r_0(w))}
        |\nabla^m\operatorname{Rm}_{\widehat g_i}|_{\widehat g_i}
        \leq C_m(w)
\]
for every \(m\geq0\). The zeroth-order estimate is the thick-part
curvature estimate in Perelman's long-time analysis. Shi's local
derivative estimates give the higher-order bounds on a smaller
concentric ball, while the injectivity-radius estimate follows from
the curvature bound and the noncollapsing condition at the curvature
scale. We do not describe the details here and refer the reader to the exposition of \cite{cao2006complete}.

\noindent Now Hamilton compactness therefore applies. If
\(x_i\in\widehat M_i^+(w)\), then, after passing to a subsequence, there
exist a complete pointed smooth limit
\[
        (X,g_X,x_\infty)
\]
and, for every compact set \(K\Subset X\), embeddings
\[
        \Psi_i^K:K\longrightarrow\widehat M_i
\]
such that
\[
        (\Psi_i^K)^*\widehat g_i
        \longrightarrow g_X
\]
smoothly on \(K\). Equivalently, for every \(m\geq0\),
\[
        \|(\Psi_i^K)^*\widehat g_i-g_X\|_{C^m(K,g_X)}
        \longrightarrow0.
\]
\noindent Perelman's long-time existence theorem identifies every such
regular thick limit as a complete finite-volume hyperbolic manifold.
For completeness, the selected entropy estimates are consistent with
the same identification. Indeed, on every compact thick-limit
coordinate domain on which the pulled-back entropy minimizers have a
nonzero limit, \eqref{eq:eigenfunction-energy}, the local Harnack
inequality \cite{yau1994harnack}, and the interior Schauder estimates give smooth convergence
of the normalized minimizers to a positive constant. Pulling back
\[
        \widehat{\mathcal S}_i
        =
        \operatorname{Ric}_{\widehat g_i}
        +
        \nabla^2_{\widehat g_i}\widehat f_i
        -
        \frac{\widehat\lambda_i}{3}\widehat g_i
\]
and using \eqref{eq:late-good-entropy}, one obtains
\[
        \operatorname{Ric}_{g_X}+2g_X=0.
\]
Since \(g_X\) is smooth, this identity holds smoothly. In dimension
three the Einstein condition implies constant sectional curvature
\(-1\).
\noindent
It remains to pass from pointed thick convergence to convergence on
large domains and to prove that the complementary regions have
vanishing volume. Perelman's geometrization compactness theorem gives
a complete finite-volume hyperbolic limit \((H,h_H)\), an exhaustion
by compact smooth domains
\[
        K^j\Subset H,
        \qquad
        K^j\nearrow H,
\]
and diffeomorphisms onto their images
\[
        \psi_i^j:K^j\longrightarrow U_i^j\subset\widehat M_i
\]
such that, for every fixed \(j\),
\[
        (\psi_i^j)^*\widehat g_i
        \longrightarrow h_H
\]
smoothly on \(K^j\). In particular,
\[
        \operatorname{Vol}_{\widehat g_i}(U_i^j)
        =
        \operatorname{Vol}_{(\psi_i^j)^*\widehat g_i}(K^j)
        \longrightarrow
        \operatorname{Vol}_{h_H}(K^j).
\]

\noindent The distinguished component \(\widehat M_i\) is diffeomorphic to the
fixed closed hyperbolic manifold \(M\). Its geometrization therefore
contains precisely the hyperbolic piece determined by \(M\); equivalently,
the total hyperbolic volume is
\[
        \operatorname{Vol}_{h_H}(H)
       =
        \operatorname{Vol}_h(M)
        =
        V_h.
\]
The topological identification of the hyperbolic piece with \(M\),
followed by Mostow rigidity, identifies
\[
        (H,h_H)
        \cong
        (M,h).
\]
We henceforth make this identification and choose the exhaustion so
that
\[
        K^j\Subset M,
        \qquad
        K^j\nearrow M,
        \qquad
        \operatorname{Vol}_h(M\setminus K^j)\longrightarrow0.
\]
For instance, one may take
\[
        K^j=M\setminus B_h(p,r_j),
        \qquad
        r_j\downarrow0.
\]

\noindent For each fixed \(j\),
\[
        (\psi_i^j)^*\widehat g_i
        \longrightarrow h
\]
smoothly on \(K^j\). Consequently,
\[
        \|(\psi_i^j)^*\widehat g_i-h\|_{C^0(K^j,h)}
        \longrightarrow0
\]
and
\[
        \operatorname{Vol}_{\widehat g_i}(U_i^j)
        \longrightarrow
        \operatorname{Vol}_h(K^j).
\]
Choose a diagonal sequence \(j=j(i)\to\infty\) sufficiently slowly that
\[
        \|(\psi_i^{j(i)})^*\widehat g_i-h
        \|_{C^0(K^{j(i)},h)}
        \longrightarrow0
\]
and
\[
        \left|
        \operatorname{Vol}_{\widehat g_i}(U_i^{j(i)})
        -
        \operatorname{Vol}_h(K^{j(i)})
        \right|
        \longrightarrow0.
\]
Set
\[
        K_i^t:=K^{j(i)},
        \qquad
        G_i^t:=U_i^{j(i)},
        \qquad
        Z_i^t:=\widehat M_i\setminus G_i^t,
        \qquad
        \psi_i^t:=\psi_i^{j(i)}.
\]
Then
\[
        \|(\psi_i^t)^*\widehat g_i-h\|_{C^0(K_i^t,h)}
        \longrightarrow0
\]
and
\[
        \operatorname{Vol}_h(M\setminus K_i^t)
        \longrightarrow0.
\]

\noindent
We finally verify directly that the bad region has vanishing
\(\widehat g_i\)-volume. Since
\[
        Z_i^t=\widehat M_i\setminus G_i^t,
\]
we have
\[
\begin{aligned}
         \operatorname{Vol}_{\widehat g_i}(Z_i^t)
        &=
        |\operatorname{Vol}_{\widehat g_i}(M)
        -
        \operatorname{Vol}_{\widehat g_i}(G_i^t)|         =|\operatorname{Vol}_{\widehat g_i}(M)-\operatorname{Vol}_h(M)+\operatorname{Vol}_h(K_i^t)-\operatorname{Vol}_{\widehat g_i}(G_i^t)+\operatorname{Vol}_h(M)-\operatorname{Vol}_h(K_i^t)|\\
        &\leq
        \left|
        \operatorname{Vol}_{\widehat g_i}(M)-\operatorname{Vol}_h(M)
        \right|                                          +
        \left|
        \operatorname{Vol}_{\widehat g_i}(G_i^t)
        -
        \operatorname{Vol}_h(K_i^t)
        \right|                                          +
        \operatorname{Vol}_h(M\setminus K_i^t).
\end{aligned}
\]
The first term tends to zero by the volume bounds, the second by the
diagonal smooth convergence, and the third by the exhaustion. Hence
\[
        \operatorname{Vol}_{\widehat g_i}(Z_i^t)
        \longrightarrow0.
\]

\noindent If
\[
        \Phi_i^t:=(\psi_i^t)^{-1}
        :
        G_i^t\longrightarrow K_i^t,
\]
then the preceding \(C^0\)-convergence is equivalently written as
\[
        \sup_{x\in G_i^t}
        \left|
        \widehat g_i-(\Phi_i^t)^*h
        \right|_{(\Phi_i^t)^*h}(x)
        \longrightarrow0.
\]
Finally, \eqref{eq:late-good-scalar} and the nonnegativity of
\(R(\widehat g_i)+6\) give
\[
\begin{aligned}
        0
        &\leq
        \int_{G_i^t}
        \bigl(R(\widehat g_i)+6\bigr)
        \,d\mu_{\widehat g_i}  \\
        &\leq
        \int_{\widehat M_i}
        \bigl(R(\widehat g_i)+6\bigr)
        \,d\mu_{\widehat g_i}
        \longrightarrow0.
\end{aligned}
\]
Since \(\widehat g_i=g_i(t_i)\), all the asserted conclusions follow.
\end{proof}

\begin{remark}
 No assertion is made about the distance functions \(d_{g_i(t_i)}\).  Thus
short-circuit regions contained in \(Z_i(t_i)\) do not affect the conclusion.
The result is a tensorial \(C^0\)-convergence statement on the good part,
not a Gromov--Hausdorff statement.   
\end{remark}

\section{Pulling Back to the Initial Slice}
\noindent In the previous section, we proved the existence of a time $t=t_{i}\in [A_{i}/2,A_{i}]$ good slice for every $A_{i}\to\infty$ by using the fact that the Ricci flow with surgery on a hyperbolizable manifold is globally well-posed. In the present work, by the volume constraint and the initial condition, the volume loss at each surgery is controlled. Proving the $C^{0}$ convergence of the good part of a good time slice (adapted to the sequence of chosen initial conditions with the volume budget) to that of the final slice the hyperbolic manifold is relatively easy and follows from the standard entropy control and the compactness theory of the Ricci flow. However, the main difficulty is to pull back the convergence to the initial slice $(M,g_{i}(0))$ which what we want to prove ultimately. As discussed in the introduction, we face a significant difficulty in this part regarding the control of the appropriate norm of the metric deformation $\partial_{t}g$ along the Ricci flow. The first ingredient that we need is the so called ``Łojasiewicz" inequality (see \cite{haslhofer2012perelman} for example) in the study of gradient flow. The crucial idea here is that we need to use the gradient flow structure of the normalized Ricci flow on the space of metrics modulo diffeomorphisms and scalings. The following proposition accomplishes it  
\begin{proposition}
\label{LSpath}
Let $(\widetilde M,h)$ be a closed hyperbolic three-manifold normalized by
\[
        \text{Ric}_h=-2h,
        \qquad
        R(h)=-6.
\]
Fix \(s>7/2\). There exist a sufficiently small
\(H^s\)-neighborhood
$\mathcal U
        \subset \operatorname{Met}^{s}(\widetilde M)$
of the hyperbolic orbit
$\operatorname{Diff}^{s+1}(\widetilde M)\cdot h,$
constants
$C_{\mathrm{LS}}<\infty, 
        \theta\in\left(0,\frac12\right],$
and a constant $C<\infty$, depending only on
\((\widetilde M,h)\), \(s\), and \(\mathcal U\), with the following
property. Let \(g(t)\), \(t\in[a,b]\), be a smooth solution of the normalized
Ricci flow
\[
        \partial_tg(t)
        =
        -2\bigl(\text{Ric}_{g(t)}+2g(t)\bigr)
\]
on \(\widetilde M\), and assume that
$g(t)\in\mathcal U 
        \text{for every }t\in[a,b]$.
Let \(f(t)\) be the minimizer of Perelman's \(\lambda\)-functional,
normalized by
\[
        \int_{\widetilde M}
        e^{-f(t)}\,d\mu_{g(t)}=1,
\]
and set
\[
        \lambda(t):=\lambda[g(t)],
\mathcal S(t)
        :=
        \text{Ric}_{g(t)}
        +
        \nabla^2_{g(t)}f(t)
        -
        \frac{\lambda(t)}{3}g(t).
\]
Define the scale-invariant entropy
$\overline\lambda[g]        :=
        \lambda[g]\,
        \text{Vol}_g(\widetilde M)^{2/3},$
the entropy deficit
\[
        \mathcal D(t)
        :=
        \overline\lambda[h]
        -
        \overline\lambda[g(t)],
\]
and the scale-invariant weighted \(L^2\)-norm of the soliton defect
\[
        \mathcal N(t)
        :=
        \text{Vol}_{g(t)}(\widetilde M)^{1/3}
        \left(
        \int_{\widetilde M}
        |\mathcal S(t)|_{g(t)}^2
        e^{-f(t)}\,d\mu_{g(t)}
        \right)^{1/2}.
\]
Then
\begin{equation}
\label{eq:LS-good-component}
        \mathcal D(t)^{1-\theta}
        \leq
        C_{\mathrm{LS}}\mathcal N(t)
\end{equation}
for every \(t\in[a,b]\). Moreover,
\begin{equation}
\label{eq:LS-interval-path}
        \int_a^b\mathcal N(t)\,dt
        \leq
        \frac{C_{\mathrm{LS}}}{2\theta}
        \left(
        \mathcal D(a)^\theta
        -
        \mathcal D(b)^\theta
        \right)
        \leq
        \frac{C_{\mathrm{LS}}}{2\theta}
        \mathcal D(a)^\theta .
\end{equation}
Suppose, in addition, that
$R(g(t))+6\geq0  
        \text{on }\widetilde M
$
for every \(t\in[a,b]\). Let \(\varphi_t\) solve
\[
        \frac{d}{dt}\varphi_t
        =
        -\nabla_{g(t)}f(t)\circ\varphi_t,
        \qquad
        \varphi_a=\text{Id},
\]
and define $\widehat g(t):=\varphi_t^*g(t)$. Then
\begin{equation}
\label{eq:modified-Ricci-flow-evolution}
        \partial_t\widehat g(t)
        =
        -2\varphi_t^*
        \left(
        \mathcal S(t)
        +
        \frac{\lambda(t)+6}{3}g(t)
        \right).
\end{equation}
Consequently,
\begin{align}
        \int_a^b
        \|\partial_t\widehat g(t)\|
        _{L^2(\widetilde M,\widehat g(t))}
        \,dt
        &\leq
        C\mathcal D(a)^\theta
        +
        C
        \int_a^b
        \int_{\widetilde M}
        \bigl(R(g(t))+6\bigr)
        \,d\mu_{g(t)}\,dt.
\label{eq:modified-flow-L1L2}
\end{align}
\end{proposition}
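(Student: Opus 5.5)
The plan is to prove the three assertions in order: the Łojasiewicz inequality \eqref{eq:LS-good-component}, the path-length bound \eqref{eq:LS-interval-path}, and then the evolution identity \eqref{eq:modified-Ricci-flow-evolution} with its $L^1_tL^2_x$ consequence \eqref{eq:modified-flow-L1L2}.

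\textbf{Step 1: the Łojasiewicz inequality.} I would import it from the Łojasiewicz--Simon theory for Perelman's functional near Einstein metrics, in the form of Haslhofer and Haslhofer--Müller \cite{haslhofer2012perelman}, adapted to $\overline\lambda$.

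The functional $\overline\lambda$ is invariant under diffeomorphisms and scalings. Near $h$ it is real analytic on $H^s$, with $s>7/2$ ensuring $C^2$ control of metrics, so the minimizer $f$ depends analytically on $g$. Its $L^2(e^{-f}d\mu)$-gradient, modulo scaling, is $-\mathcal S$. So I would first restrict to an Ebin slice through $h$ and apply the abstract Łojasiewicz--Simon inequality for analytic functionals whose Hessian is Fredholm on the slice. Diffeomorphism and scaling invariance then extend it to the whole $H^s$-neighborhood $\mathcal U$ of the orbit.

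In fact, by Calabi--Weil infinitesimal rigidity of closed hyperbolic three-manifolds, the second variation of $\overline\lambda$ at $h$ is nondegenerate transversally to the orbit. So I expect $\theta=1/2$, but any $\theta\in(0,1/2]$ suffices. The scale-invariant normalization in $\mathcal N$ is chosen precisely so that both sides of \eqref{eq:LS-good-component} are scale invariant. I expect this step to be the main obstacle: one needs $\mathcal U$ uniform, the constant $C_{\mathrm{LS}}$ depending only on $(\widetilde M,h)$, $s$ and $\mathcal U$, and must check that $\lambda[h]$ maximizes $\overline\lambda$ on $\mathcal U$, so that $\mathcal D\geq 0$. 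The last point already follows from Proposition \ref{entropydeficit}.

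\textbf{Step 2: the path-length bound.} By Proposition \ref{monotonicity}, on $[a,b]$ we have
\[
\mathcal D'(t)\leq -2\mathcal N(t)^2.
\]
Where $\mathcal D>0$, this and Step 1 give
\[
\frac{d}{dt}\mathcal D^\theta=\theta\mathcal D^{\theta-1}\mathcal D'\leq -2\theta\,\mathcal D^{\theta-1}\mathcal N^2\leq -\frac{2\theta}{C_{\mathrm{LS}}}\mathcal N .
\]
If $\mathcal D(t_0)=0$, then $g(t_0)$ is hyperbolic, and by monotonicity $\mathcal N\equiv0$ afterwards. Integrating over $[a,b]$ then yields \eqref{eq:LS-interval-path}.

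\textbf{Step 3: the evolution identity.} I would compute directly:
\[
\partial_t\widehat g=\varphi_t^*\bigl(\partial_tg+\mathcal L_{-\nabla f}g\bigr)=\varphi_t^*\bigl(-2\text{Ric}-4g-2\nabla^2f\bigr).
\]
Then I rewrite $-2\text{Ric}-4g-2\nabla^2 f=-2\mathcal S-2\bigl(\frac{\lambda}{3}+2\bigr)g$, which gives \eqref{eq:modified-Ricci-flow-evolution}.

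\textbf{Step 4: the $L^1_tL^2_x$ bound.} By diffeomorphism invariance,
\[
\|\partial_t\widehat g\|_{L^2(\widehat g)}\leq 2\|\mathcal S\|_{L^2(g)}+\tfrac{2}{3}|\lambda+6|\,\text{Vol}_g^{1/2}\,|g|_g .
\]
I bound the two terms separately.

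For the first term, on $\mathcal U$ the minimizer $f$ is $C^0$-close to a constant. This follows from elliptic estimates for $-4\Delta u+Ru=\lambda u$, with $R$ controlled through $H^s\subset C^2$. Hence $e^{-f}$ is bounded above and below, and the volume is bounded, so $\|\mathcal S\|_{L^2(g)}\leq C\mathcal N$.

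For the second term, the hypothesis $R+6\geq0$ gives $\lambda\geq\min R\geq-6$. Testing the definition of $\lambda$ with the constant function gives $\lambda\leq\bar R$. Together,
\[
0\leq\lambda+6\leq \text{Vol}_g^{-1}\int_{\widetilde M}(R+6)\,d\mu_g .
\]
Since the volume is bounded below on $\mathcal U$, this term is controlled by $C\int_{\widetilde M}(R+6)\,d\mu_g$. Integrating in $t$ and inserting Step 2 gives \eqref{eq:modified-flow-L1L2}.
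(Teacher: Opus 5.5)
Your Steps 2--4 match the paper's argument essentially verbatim, and the slice-plus-analyticity setup in Step 1 is also what the paper does. The gap is the sentence in Step 1 asserting that the weighted $L^2$-gradient of $\overline\lambda$ is $-\mathcal S$ modulo scaling. That is false, and it is exactly where adapting Haslhofer--M\"uller to $\overline\lambda$ stops being a formality. The volume is varied with respect to $d\mu_g$ rather than $d\nu=e^{-f}d\mu_g$, so
\[
D\overline\lambda_g(a)=-V^{2/3}\int\langle \text{Ric}_g+\nabla^2 f,a\rangle\,d\nu+\frac{\lambda}{3}V^{-1/3}\int\tr_g a\,d\mu_g .
\]
Hence, for $(A,B)_{\mathcal H_g}=V^{2/3}\int\langle A,B\rangle\,d\nu$, the gradient is $-\mathfrak G_g$ with $\mathfrak G_g=\text{Ric}_g+\nabla^2 f-\frac{\lambda}{3V}e^{f}g$, and
\[
\mathfrak G_g-\mathcal S_g=-\frac{\lambda}{3}\Bigl(\frac{e^{f}}{V}-1\Bigr)g .
\]
This is a non-constant conformal tensor, not a scaling direction. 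Both $\mathcal S_g$ and $\mathfrak G_g$ are already $\mathcal H_g$-orthogonal to $g$, since $\int(R+\Delta f)\,d\nu=\lambda$. The difference is in general not orthogonal to the slice, so projecting onto the slice does not remove it.

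The abstract inequality therefore only gives $\mathcal D^{1-\theta}\leq C_{\mathrm{LS}}\|\nabla_{\mathscr S}\overline\lambda(g)\|_{\mathcal H_g}\leq C_{\mathrm{LS}}\|\mathfrak G_g\|_{\mathcal H_g}$. To reach $\mathcal N$ you must additionally prove $\|e^{f}/V-1\|_{L^2(d\nu)}\leq C\|\mathcal S_g\|_{L^2(d\nu)}$ on $\mathcal U$. For the unnormalized $\lambda$ near a Ricci-flat metric no such term appears, which is why this issue does not arise in Haslhofer--M\"uller.

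The paper closes this gap as follows.
\begin{enumerate}
\item Set $\phi:=f-\log V$ and $d\overline\mu:=V^{-1}d\mu_g$. Taking the trace of $\mathcal S_g$ and subtracting the Euler--Lagrange equation $2\Delta f-|\nabla f|^2+R=\lambda$ gives $\Delta_g\phi=|\nabla\phi|_g^2-\tr_g\mathcal S_g$.
\item The uniform spectral gap on $\mathcal U$ gives $\|\phi-\overline\phi\|_{H^2}\leq C\|\mathcal S_g\|_{L^2}+C\|\nabla\phi\|_{L^\infty}\|\nabla\phi\|_{L^2}$, where $\overline\phi=\int\phi\,d\overline\mu$. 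The last term is absorbed after shrinking $\mathcal U$, because $f$ is constant at $h$ and depends continuously on $g$ in $H^s$.
\item The normalization $\int e^{-\phi}\,d\overline\mu=1$ and a Taylor expansion give $|\overline\phi|\leq C\|\phi\|_{L^2(d\overline\mu)}^2$. Hence $\|\phi\|_{L^2}\leq C\|\mathcal S_g\|_{L^2}$ once $\phi$ is small.
\item Consequently $\|e^{\phi}-1\|_{L^2(d\nu)}\leq C\|\mathcal S_g\|_{L^2(d\nu)}$, and so $\|\mathfrak G_g\|_{\mathcal H_g}\leq C\mathcal N$.
\end{enumerate}
With this estimate inserted, your Step 1 yields $\mathcal D^{1-\theta}\leq C_{\mathrm{LS}}\mathcal N$, and Steps 2--4 then go through as you wrote them.
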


\begin{remark}
\label{rem:L2-stronger-than-L1}
The principal conclusion of Proposition~\ref{LSpath} is the
\(L_t^1L_x^2\)-estimate
\eqref{eq:modified-flow-L1L2}. The \(L_t^1L_x^2\)-estimate implies the
\(L_t^1L_x^1\)-estimate due to the uniform boundedness of the volume
\end{remark}

\begin{remark}
\label{rem:LS-good-component-interpretation}
We need to take caution about the following fact. In the application of Proposition~\ref{LSpath}, the manifold
\(\widetilde M\) does not denote the entire, possibly singular or
surgically modified, Ricci-flow time slice. It denotes the fixed closed
hyperbolic model used to parametrize the backward-saturated good
component obtained from the intermediate comparison proposition \ref{prop:entropy-good-time-c0-hyperbolic-borel}.

\noindent More precisely, after pulling the terminal good region backward along
the surviving spacetime tube and identifying the resulting slices with
a fixed reference manifold (which we will do in the next proposition \ref{prop:flowback-c0-comparison-map}), the corresponding pulled-back metrics are
regarded as metrics on \(\widetilde M\). All quantities appearing in
Proposition~\ref{LSpath}, including
$f(t),    \lambda[g(t)],
        \mathcal D(t),
        \mathcal N(t),~
        \text{and}
        ~\mathcal U,
$
refer exclusively to this identified good component.

\noindent The proposition is applied only on those connected time intervals for
which the pulled-back good metric belongs to the hyperbolic
neighborhood \(\mathcal U\). No Lojasiewicz--Simon estimate is imposed
on the uncontrolled complement of the good component or on the full
Ricci-flow slice since such an application does not make sense. On the complementary time intervals, the duration is
controlled separately by the entropy-dissipation estimate and the
uniform gradient gap away from \(\mathcal U\).
\end{remark}

\begin{proof}
Throughout the proof, all geometric quantities are defined on the fixed
closed manifold \(\widetilde M\) parametrizing the good component.  We write
\[
        V(t):=\text{Vol}_{g(t)}(\widetilde M),
        \qquad
        \lambda(t):=\lambda[g(t)],
\]
and
\[
        d\nu_t:=e^{-f(t)}\,d\mu_{g(t)},
        \qquad
        \int_{\widetilde M}d\nu_t=1.
\]
Perelman's monotonicity formula for the scale-invariant entropy gives
\begin{eqnarray}
        \frac{d}{dt}\overline\lambda[g(t)]
        &=
        2V(t)^{2/3}
        \int_{\widetilde M}
        |\mathcal S(t)|_{g(t)}^2
        e^{-f(t)}\,d\mu_{g(t)}
        \notag
        +
        \frac{2}{3}V(t)^{2/3}\lambda(t)
        \bigl(\lambda(t)-\overline R(t)\bigr),
\label{eq:LS-proof-monotonicity}
\end{eqnarray}
where
\[
        \overline R(t)
        :=
        \frac{1}{V(t)}
        \int_{\widetilde M}R(g(t))\,d\mu_{g(t)}.
\]
Since \(\widetilde M\) is of negative Yamabe type,
\[
        \lambda(t)\leq0,
\]
and the constant function is an admissible test function in the definition
of \(\lambda\), so that
\[
        \lambda(t)\leq\overline R(t).
\]
The second term on the right-hand side of
\eqref{eq:LS-proof-monotonicity} is therefore nonnegative. Recalling the
definition of \(\mathcal N(t)\), we obtain
\begin{equation}
\label{eq:LS-proof-dissipation}
        -\mathcal D'(t)
        =
        \frac{d}{dt}\overline\lambda[g(t)]
        \geq
        2\mathcal N(t)^2.
\end{equation}
We next place the Lojasiewicz--Simon inequality in the precise Hilbert
space framework used here. Fix
$s>\frac{7}{2}$.
Let \(\mathcal M^s(\widetilde M)\) denote the Hilbert manifold of
\(H^s\)-metrics on \(\widetilde M\). We work modulo the action
\[
        (\varphi,c)\cdot g:=c\,\varphi^*g,
        \qquad
        (\varphi,c)\in
        \text{Diff}^{s+1}(\widetilde M)\times\mathbb R_+.
\]
Let \(\delta_h\) denote the divergence with respect to \(h\). A local
slice transverse to the diffeomorphism and scaling orbit of \(h\) is
given by
\[
\mathscr S_h^s(\varepsilon)
:=
\left\{
\begin{array}{l|l}
h+\ell
&
\begin{array}{l}
\ell\in H^s(S^2T^*\widetilde M),\quad
\delta_h\ell=0,\\[1mm]
\displaystyle
\int_{\widetilde M}\tr_h\ell\,d\mu_h=0,\quad
\|\ell\|_{H^s(h)}<\varepsilon,\quad
h+\ell>0
\end{array}
\end{array}
\right\}.
\]
Accordingly, we define
\[
\mathcal U
:=
\left\{
        g\in\mathcal M^s(\widetilde M):
        c\,\varphi^*g\in\mathscr S_h^s(\varepsilon)
        \text{ for some }
        \varphi\in\text{Diff}^{s+1}(\widetilde M),\ c>0
\right\}.
\]
After decreasing \(\varepsilon\), the functional
\[
        g\longmapsto
        \overline\lambda[g]
        =
        \lambda[g]\text{Vol}_g(\widetilde M)^{2/3}
\]
is real analytic on the slice, and the Lojasiewicz--Simon inequality at
the hyperbolic critical metric gives constants
$C_{\mathrm{LS}}<\infty,    
        \theta\in\left(0,\frac12\right],
$
such that
\begin{equation}
\label{eq:abstract-LS}
        \bigl(
        \overline\lambda[h]-\overline\lambda[g]
        \bigr)^{1-\theta}
        \leq
        C_{\mathrm{LS}}
        \left\|
        \nabla_{\mathscr S}\overline\lambda(g)
        \right\|_{\mathcal H_g}
\end{equation}
for every \(g\in\mathcal U\) (see
\cite{kroncke2013stability} for details. Here analyticity is of extreme importance). Here \(\nabla_{\mathscr S}\) denotes the
gradient restricted to the local slice and
\[
        (A,B)_{\mathcal H_g}
        :=
        V^{2/3}
        \int_{\widetilde M}
        \langle A,B\rangle_g\,d\nu
\]
is the scale-invariant weighted \(L^2\)-inner product.
We compare the gradient appearing in \eqref{eq:abstract-LS} with the
soliton defect. For a variation
\[
        a=\left.\frac{d}{d\sigma}\right|_{\sigma=0}g_\sigma,
\]
the first-variation formulas for \(\lambda\) and the volume give after straightforward computations
\begin{align}
        D\overline\lambda_g(a)
        &=
        -V^{2/3}
        \int_{\widetilde M}
        \left\langle
        \text{Ric}_g+\nabla_g^2f,a
        \right\rangle_g\,d\nu
        \notag\\
        &\quad
        +
        \frac{\lambda[g]}{3}V^{-1/3}
        \int_{\widetilde M}
        \tr_g a\,d\mu_g.
\label{eq:first-variation-scaled-lambda}
\end{align}
Since \(d\mu_g=e^f\,d\nu\), this may be written as
\[
        D\overline\lambda_g(a)
        =
        -V^{2/3}
        \int_{\widetilde M}
        \left\langle
        \mathfrak G_g,a
        \right\rangle_g\,d\nu,
\]
where
\[
        \mathfrak G_g
        :=
        \text{Ric}_g+\nabla_g^2f
        -
        \frac{\lambda[g]}{3V}e^f g.
\]
Thus \(-\mathfrak G_g\) is the ambient gradient of
\(\overline\lambda\) with respect to \(\mathcal H_g\). Since the slice
gradient is its orthogonal projection,
\begin{equation}
\label{eq:slice-gradient-projection}
        \left\|
        \nabla_{\mathscr S}\overline\lambda(g)
        \right\|_{\mathcal H_g}
        \leq
        \|\mathfrak G_g\|_{\mathcal H_g}.
\end{equation}
Recall that
\[
        \mathcal S_g
        =
        \text{Ric}_g+\nabla_g^2f
        -
        \frac{\lambda[g]}{3}g.
\]
Consequently,
\begin{equation}
\label{eq:gradient-defect-difference}
        \mathfrak G_g-\mathcal S_g
        =
        -\frac{\lambda[g]}{3}
        \left(
        \frac{e^f}{V}-1
        \right)g.
\end{equation}
We show that the difference on the right is controlled by
\(\mathcal S_g\). First, set
\[
        \phi:=f-\log V,
        \qquad
        d\overline\mu:=V^{-1}d\mu_g.
\]
Then
\[
        d\nu=e^{-\phi}\,d\overline\mu,
        \qquad
        \int_{\widetilde M}e^{-\phi}\,d\overline\mu=1.
\]
The Euler--Lagrange equation for the entropy minimizer is
\[
        2\Delta_gf-|\nabla f|_g^2+R(g)=\lambda[g].
\]
Taking the trace of \(\mathcal S_g\) and using this equation gives
\begin{equation}
\label{eq:phi-equation}
        \Delta_g\phi
        =
        |\nabla\phi|_g^2
        -
        \tr_g\mathcal S_g.
\end{equation}
Define the following
\[
        \overline\phi
        :=
        \int_{\widetilde M}\phi\,d\overline\mu,
        \qquad
        \phi_0:=\phi-\overline\phi.
\]
Because \(g\in\mathcal U\), the metrics are uniformly equivalent to
\(h\), their elliptic constants are uniformly controlled, and the first
positive eigenvalue of \(-\Delta_g\) is bounded uniformly away from zero.
It follows that
\[
        \|\phi_0\|_{H^2(g)}
        \leq
        C\|\Delta_g\phi\|_{L^2(g)}.
\]
Using \eqref{eq:phi-equation}, we obtain
\begin{eqnarray}
        \|\phi_0\|_{H^2(g)}
        &\leq
        C\|\mathcal S_g\|_{L^2(g)}
        +
        C\bigl\||\nabla\phi|_g^2\bigr\|_{L^2(g)}
        \notag\leq
        C\|\mathcal S_g\|_{L^2(g)}
        +
        C\|\nabla\phi\|_{L^\infty(g)}
        \|\nabla\phi_0\|_{L^2(g)}.
\label{eq:phi-H2-preabsorption}
\end{eqnarray}
The entropy minimizer depends smoothly on the metric in the
\(H^s\)-neighborhood \(\mathcal U\), and at \(h\) it is constant.
Shrinking \(\mathcal U\), if necessary, we may therefore assume that
$\|\nabla\phi\|_{L^\infty(g)}$
is sufficiently small. The final term in
\eqref{eq:phi-H2-preabsorption} can then be absorbed, giving
\begin{equation}
\label{eq:phi-zero-mode-control}
        \|\phi-\overline\phi\|_{H^2(g)}
        \leq
        C\|\mathcal S_g\|_{L^2(g)}.
\end{equation}
It remains to estimate the constant mode. Since
\[
        \int_{\widetilde M}e^{-\phi}\,d\overline\mu=1
\]
and \(\phi\) is uniformly small in \(C^0\), Taylor's formula gives for small $\varphi$
\[
\begin{aligned}
        0
        &=
        \int_{\widetilde M}
        \bigl(e^{-\phi}-1\bigr)\,d\overline\mu=
        -\overline\phi
        +
        O\left(
        \|\phi\|_{L^2(d\overline\mu)}^2
        \right).
\end{aligned}
\]
Hence
\[
        |\overline\phi|
        \leq
        C\|\phi\|_{L^2(d\overline\mu)}^2.
\]
Combining this estimate with
\eqref{eq:phi-zero-mode-control}, and shrinking \(\mathcal U\) once
more, yields
\begin{equation}
\label{eq:full-phi-control}
        \|\phi\|_{L^2(d\overline\mu)}
        \leq
        C\|\mathcal S_g\|_{L^2(g)}.
\end{equation}
Since the measures
\[
        d\nu=e^{-\phi}d\overline\mu,
        \qquad
        d\overline\mu,
        \qquad
        d\mu_g
\]
are uniformly equivalent on \(\mathcal U\), and
\[
        \frac{e^f}{V}-1=e^\phi-1,
\]
equation \eqref{eq:full-phi-control} implies
\begin{equation}
\label{eq:exponential-phi-control}
        \left\|
        \frac{e^f}{V}-1
        \right\|_{L^2(d\nu)}
        \leq
        C
        \|\mathcal S_g\|_{L^2(d\nu)}.
\end{equation}
The ordinary entropy \(\lambda[g]\) and the volume \(V\) are uniformly
bounded above and below on \(\mathcal U\). Therefore
\eqref{eq:gradient-defect-difference} and
\eqref{eq:exponential-phi-control} give
\[
        \|\mathfrak G_g\|_{\mathcal H_g}
        \leq
        C V^{1/3}
        \left(
        \int_{\widetilde M}
        |\mathcal S_g|_g^2\,d\nu
        \right)^{1/2}.
\]
In the notation of the proposition,
\begin{equation}
\label{eq:ambient-gradient-controlled-N}
        \|\mathfrak G_{g(t)}\|_{\mathcal H_{g(t)}}
        \leq
        C\mathcal N(t).
\end{equation}
Combining \eqref{eq:abstract-LS},
\eqref{eq:slice-gradient-projection}, and
\eqref{eq:ambient-gradient-controlled-N}, we conclude that
\begin{equation}
\label{eq:final-LS-defect}
        \mathcal D(t)^{1-\theta}
        \leq
        C_{\mathrm{LS}}\mathcal N(t).
\end{equation}
We now derive the path estimate. At times for which
\(\mathcal N(t)>0\), equation \eqref{eq:final-LS-defect} gives
\[
        \frac{1}{\mathcal N(t)}
        \leq
        C_{\mathrm{LS}}\mathcal D(t)^{\theta-1}.
\]
Using \eqref{eq:LS-proof-dissipation}, we obtain
\[
\begin{aligned}
        \mathcal N(t)
        &=
        \frac{\mathcal N(t)^2}{\mathcal N(t)}\leq
        C_{\mathrm{LS}}
        \mathcal N(t)^2
        \mathcal D(t)^{\theta-1}\leq
        -\frac{C_{\mathrm{LS}}}{2}
        \mathcal D(t)^{\theta-1}\mathcal D'(t).
\end{aligned}
\]
The same inequality holds at the remaining times by continuity. Integration
over \([a,b]\) therefore gives
\begin{align}
        \int_a^b\mathcal N(t)\,dt
        &\leq
        \frac{C_{\mathrm{LS}}}{2\theta}
        \left(
        \mathcal D(a)^\theta
        -
        \mathcal D(b)^\theta
        \right)
        \notag
        \leq
        \frac{C_{\mathrm{LS}}}{2\theta}
        \mathcal D(a)^\theta.
\label{eq:LS-final-path-estimate}
\end{align}
\noindent We finally estimate the deformation in the entropy gauge. Let
\(\varphi_t\) solve
\[
        \dot\varphi_t
        =
        -\nabla_{g(t)}f(t)\circ\varphi_t,
        \qquad
        \varphi_a=\text{Id},
\]
and set
\[
        \widehat g(t):=\varphi_t^*g(t).
\]
Then
\begin{align}
        \partial_t\widehat g(t)
        &=
        \varphi_t^*
        \left(
        \partial_tg(t)
        +
        \mathcal L_{-\nabla f(t)}g(t)
        \right)
        \notag\\
        &=
        -2\varphi_t^*
        \left(
        \text{Ric}_{g(t)}
        +
        \nabla^2_{g(t)}f(t)
        +
        2g(t)
        \right)
        \notag\\
        &=
        -2\varphi_t^*
        \left(
        \mathcal S(t)
        +
        \frac{\lambda(t)+6}{3}g(t)
        \right).
\label{eq:entropy-gauge-evolution-proof}
\end{align}
By invariance of tensor norms and volume under pullback,
\begin{align}
        \|\partial_t\widehat g(t)\|
        _{L^2(\widetilde M,\widehat g(t))}
        &\leq
        2\|\mathcal S(t)\|_{L^2(\widetilde M,g(t))}
        +
        \frac{2}{\sqrt3}
        V(t)^{1/2}|\lambda(t)+6|.
\label{eq:gauged-L2-pointwise-time}
\end{align}
Since \(g(t)\in\mathcal U\), the measures
\(e^{-f(t)}d\mu_{g(t)}\) and \(d\mu_{g(t)}\) are uniformly equivalent,
and \(V(t)\) is uniformly bounded above and below. It follows that
\[
        \|\mathcal S(t)\|_{L^2(\widetilde M,g(t))}
        \leq
        C\mathcal N(t).
\]
Suppose now that \(R(g(t))+6\geq0\). The Rayleigh quotient defining
\(\lambda[g(t)]\) gives
\[
        \lambda(t)\geq-6.
\]
Testing the same quotient with the constant function gives
\[
\begin{aligned}
        \lambda(t)
        &\leq
        \frac{1}{V(t)}
        \int_{\widetilde M}R(g(t))\,d\mu_{g(t)}=
        -6+
        \frac{1}{V(t)}
        \int_{\widetilde M}
        \bigl(R(g(t))+6\bigr)\,d\mu_{g(t)}.
\end{aligned}
\]
Consequently,
\begin{equation}
\label{eq:lambda-plus-six-proof}
        0\leq\lambda(t)+6
        \leq
        \frac{1}{V(t)}
        \int_{\widetilde M}
        \bigl(R(g(t))+6\bigr)\,d\mu_{g(t)}.
\end{equation}
Combining \eqref{eq:gauged-L2-pointwise-time} and
\eqref{eq:lambda-plus-six-proof}, and using the uniform lower volume
bound, we obtain
\begin{align}
        \int_a^b
        \|\partial_t\widehat g(t)\|
        _{L^2(\widetilde M,\widehat g(t))}
        \,dt
        &\leq
        C\mathcal D(a)^\theta
        +
        C\int_a^b
        \int_{\widetilde M}
        \bigl(R(g(t))+6\bigr)
        \,d\mu_{g(t)}\,dt.
\end{align}
This proves the asserted \(L_t^1L_x^2\)-estimate.

\noindent Finally, the volumes of
\((\widetilde M,\widehat g(t))\) are uniformly bounded, since pullback
does not change volume and \(g(t)\in\mathcal U\). Therefore Hölder's
inequality gives
\[
        \|\partial_t\widehat g(t)\|_{L^1}
        \leq
        C
        \|\partial_t\widehat g(t)\|_{L^2}.
\]
Integration in time proves the corresponding
\(L_t^1L_x^1\)-estimate and completes the proof.
\end{proof}

\begin{remark}
\label{rem:closed-good-tube}
In the application of Proposition~\ref{LSpath}, the good spacetime tube
is not regarded merely as a family of compact domains with boundary.
After discarding the surgery regions and the components whose total
volume tends to zero, each regular time section of the surviving good
tube is closed, if necessary, across its negligible boundary components
by the standard post-surgery caps.  The surgery parameters and the
closing regions are chosen so that the total volume introduced or
discarded in this procedure is \(o(1)\).

\noindent The resulting time sections are identified by the surviving spacetime
diffeomorphisms, with a fixed closed manifold
$\widetilde M$.
Thus the metrics on the good tube may be viewed as a family
$\widetilde g_i(t), 0\leq t\leq t_i,$
of smooth metrics on \(\widetilde M\).  All applications of the
Lojasiewicz--Simon inequality are made to this closed family and not to
the uncontrolled complement of the good tube in the full Ricci-flow
slice.

\noindent In particular, the notation
\[
        \widetilde f_i(t),\qquad
        \widetilde\lambda_i(t),\qquad
        \widetilde{\mathcal S}_i(t),\qquad
        \widetilde{\mathcal D}_i(t)
\]
denotes respectively the Perelman minimizer, the entropy, the soliton
defect, and the entropy deficit of
\((\widetilde M,\widetilde g_i(t))\).  Since the closing regions have
vanishing volume and lie outside the inner buffered good region, they do
not affect the limiting metric comparison or the volume-exhaustion
conclusion as we will do in the final proposition \ref{prop:flowback-c0-comparison-map}.
\end{remark}

\begin{remark}
\label{rem:LS-only-good-tube}
Proposition~\ref{LSpath} is used only on those time intervals for which
the metric \(\widetilde g_i(t)\) on the closed good component lies in the
fixed Sobolev neighborhood \(\mathcal U\) of the hyperbolic orbit.
Accordingly, the entropy deficit and the defect norm appearing in the
Lojasiewicz--Simon inequality are
\[
        \widetilde{\mathcal D}_i(t)
        :=
        \overline\lambda[h]
        -
        \overline\lambda[\widetilde g_i(t)]
\]
and
\[
        \widetilde{\mathcal N}_i(t)
        :=
        \text{Vol}_{\widetilde g_i(t)}(\widetilde M)^{1/3}
        \left(
        \int_{\widetilde M}
        |\widetilde{\mathcal S}_i(t)|^2
        e^{-\widetilde f_i(t)}
        \,d\mu_{\widetilde g_i(t)}
        \right)^{1/2}.
\]
Thus both quantities are defined on the same closed good component, and
Proposition~\ref{LSpath} gives
\[
        \widetilde{\mathcal D}_i(t)^{1-\theta}
        \leq
        C_{\mathrm{LS}}\widetilde{\mathcal N}_i(t)
\]
whenever
\[
        \widetilde g_i(t)\in\mathcal U.
\]

\noindent No Lojasiewicz--Simon estimate is asserted for the full surgically
modified time slice or for the discarded thin and high-curvature
regions.  On the complementary set of times, when
\(\widetilde g_i(t)\notin\mathcal U\), the duration of that time set is
controlled by the entropy dissipation and the uniform gradient gap away
from \(\mathcal U\).
\end{remark}

\begin{remark}
\label{rem:entropy-gauge-good-tube}
Let \(\varphi_i(t)\) be generated on the closed good component by the
negative gradient of its Perelman minimizer,
\[
        \frac{d}{dt}\varphi_i(t)
        =
        -\nabla_{\widetilde g_i(t)}
        \widetilde f_i(t)\circ\varphi_i(t),
        \qquad
        \varphi_i(0)=\text{Id},
\]
and set
\[
        \widehat g_i(t)
        :=
        \varphi_i(t)^*\widetilde g_i(t).
\]
Then the diffeomorphism-modified flow satisfies
\[
        \partial_t\widehat g_i(t)
        =
        -2\varphi_i(t)^*
        \left(
        \text{Ric}_{\widetilde g_i(t)}
        +
        \nabla^2_{\widetilde g_i(t)}\widetilde f_i(t)
        +
        2\widetilde g_i(t)
        \right).
\]
Writing
\[
        \widetilde{\mathcal S}_i(t)
        :=
        \text{Ric}_{\widetilde g_i(t)}
        +
        \nabla^2_{\widetilde g_i(t)}\widetilde f_i(t)
        -
        \frac{\widetilde\lambda_i(t)}{3}
        \widetilde g_i(t),
\]
the deformation tensor is therefore
\[
        \widetilde{\mathcal E}_i(t)
        :=
        \text{Ric}_{\widetilde g_i(t)}
        +
        \nabla^2_{\widetilde g_i(t)}\widetilde f_i(t)
        +
        2\widetilde g_i(t)
        =
        \widetilde{\mathcal S}_i(t)
        +
        \frac{\widetilde\lambda_i(t)+6}{3}
        \widetilde g_i(t).
\]
Consequently, Proposition~\ref{LSpath}, together with the scalar-defect
estimate, yields
\[
        \int_0^{t_i}
        \|\widetilde{\mathcal E}_i(t)\|
        _{L^2(\widetilde M,\widetilde g_i(t))}
        \,dt
        \longrightarrow0.
\]
This is the path-length estimate used to pull the terminal hyperbolic
comparison back to the initial slice.  In particular, no separate
estimate for
\(\nabla^2\widetilde f_i\)
is required: the Hessian term is incorporated into the exact
diffeomorphism-modified deformation tensor. This is important.
\end{remark}

\begin{remark}
\label{rem:closing-volume-error}
All sets introduced in closing the surviving good component, as well as
the regions removed at surgery times, are included in the exceptional
sets.  Their total volume is absorbed into an error
\[
        \varepsilon_i^{\mathrm{cl}}\longrightarrow0.
\]
Accordingly, replacing a time section of the good tube by its closed
model changes the relevant volume estimates only by
\(o(1)\).  The path-length and metric-comparison arguments are carried
out on a smaller region disjoint from the closing collars, so
the final tensorial \(C^0\)-comparison on the good set is unchanged.
\end{remark}

\noindent Next we prove a crucial lemma regarding the gradient gap used in the final proposition \ref{prop:flowback-c0-comparison-map}. This proposition is vital in the sense that it helps to quantify the sets in the time interval $[0,t_{i}]$ ($t_{i}\to\infty$, and there are finitely many surgery times in this interval) that have large soliton defect in the appropriate norm.

\begin{lemma}
\label{lem:uniform-soliton-defect-gap}
Let \((\widetilde M,h)\) be a closed hyperbolic three-manifold, and
fix \(s>7/2\). Denote by
\[
        \mathscr H_h
        :=
        \left\{
                c\,\phi^*h:
                c>0,\ 
                \phi\in\operatorname{Diff}^{s+1}(\widetilde M)
        \right\}
\]
the hyperbolic orbit under diffeomorphisms and constant rescalings,
and let
$\mathcal U\subset\operatorname{Met}^s(\widetilde M)$
be an open
\(\operatorname{Diff}^{s+1}(\widetilde M)\times\mathbb R_+\)-invariant
neighborhood of \(\mathscr H_h\).

\noindent Let \(\mathscr K\subset\operatorname{Met}^s(\widetilde M)\) be a
diffeomorphism-invariant class of smooth metrics satisfying the
following two properties:
\begin{enumerate}
\item there exist \(0<v_0\leq V_0<\infty\) such that
\[
        v_0
        \leq
        \operatorname{Vol}_g(\widetilde M)
        \leq
        V_0
        \qquad\text{for every }g\in\mathscr K;
\]
\item \(\mathscr K\) is relatively compact in \(H^s\) modulo
\(\operatorname{Diff}^{s+1}(\widetilde M)\): for every sequence
\(g_j\in\mathscr K\), there exist a subsequence and
\(\phi_j\in\operatorname{Diff}^{s+1}(\widetilde M)\) such that
\[
        \phi_j^*g_j\longrightarrow g_\infty
        \qquad\text{in }H^s
\]
for some \(H^s\)-metric \(g_\infty\) on \(\widetilde M\).
\end{enumerate}

\noindent For \(g\in\mathscr K\), let \(f_g\) be the normalized minimizer of
Perelman's \(\lambda\)-functional,
\[
        \int_{\widetilde M}e^{-f_g}\,d\mu_g=1,
\]
and set
\[
        S_g
        :=
        \operatorname{Ric}_g+\nabla_g^2f_g
        -\frac{\lambda[g]}{3}g,
\]
\[
        \mathcal N(g)
        :=
        \operatorname{Vol}_g(\widetilde M)^{1/3}
        \left(
        \int_{\widetilde M}
        |S_g|_g^2e^{-f_g}\,d\mu_g
        \right)^{1/2}.
\]
Then there exists a constant
$\gamma_{\mathcal U}>0$
such that
\begin{equation}
\label{eq:uniform-soliton-defect-gap}
        g\in\mathscr K\setminus\mathcal U
        \quad\Longrightarrow\quad
        \mathcal N(g)\geq\gamma_{\mathcal U}.
\end{equation}
\end{lemma}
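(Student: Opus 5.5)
I will argue by contradiction combined with compactness. Suppose no such \(\gamma_{\mathcal U}\) exists. Then there are metrics \(g_j\in\mathscr K\setminus\mathcal U\) with \(\mathcal N(g_j)\to0\). By property (2) of \(\mathscr K\), after passing to a subsequence there are \(\phi_j\in\operatorname{Diff}^{s+1}(\widetilde M)\) with \(\phi_j^*g_j\to g_\infty\) in \(H^s\).

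\(\mathcal N\) is diffeomorphism invariant, since \(f_{\phi^*g}=\phi^*f_g\) by uniqueness of the minimizer. It is also scale invariant, because \(\lambda[cg]=c^{-1}\lambda[g]\) and \(|S|^2e^{-f}d\mu\) scales like \(c^{-2}c^{3/2}\) against \(\operatorname{Vol}^{2/3}\sim c\). Hence \(\mathcal N(\phi_j^*g_j)=\mathcal N(g_j)\to0\). Since \(\mathcal U\) is open and \(\operatorname{Diff}\)-invariant, \(\phi_j^*g_j\notin\mathcal U\), and therefore \(g_\infty\notin\mathcal U\). The volume bounds in (1) pass to the limit, so \(g_\infty\) is a genuine \(H^s\)-metric with \(v_0\leq\operatorname{Vol}_{g_\infty}\leq V_0\).

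The key step is continuity of \(g\mapsto(\lambda[g],f_g)\), and hence of \(\mathcal N\), in the \(H^s\) topology for \(s>7/2\). By Sobolev embedding \(H^s\hookrightarrow C^{2,\alpha}\), the metric coefficients converge in \(C^{2,\alpha}\), so \(R_g\) converges in \(C^{0,\alpha}\). Here \(\lambda[g]\) is the bottom eigenvalue of \(-4\Delta_g+R_g\), which is simple with a positive eigenfunction \(u_g=e^{-f_g/2}\). Standard perturbation theory for simple eigenvalues, plus Schauder estimates, then gives \(\lambda[g_j']\to\lambda[g_\infty]\) and \(u_{g_j'}\to u_{g_\infty}\) in \(C^{2,\alpha}\), where \(g_j'=\phi_j^*g_j\). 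Positivity of \(u_{g_\infty}\) (Harnack) then gives convergence of \(f\) in \(C^{2,\alpha}\). It follows that \(S_{g_j'}\to S_{g_\infty}\) in \(C^0\), so \(\mathcal N(g_\infty)=0\). I expect this continuity step to be the main obstacle: one must check that \(H^s\) convergence gives enough regularity for the Ricci and Hessian terms to converge in \(L^2\). Since \(s>7/2\), only \(L^2\) convergence of second derivatives of \(g\) is needed, which \(H^s\) provides directly, so no delicate issue should arise.

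Now \(\mathcal N(g_\infty)=0\) means \(\operatorname{Ric}+\nabla^2f=\frac{\lambda}{3}g_\infty\), so \(g_\infty\) is a shrinking, steady or expanding gradient Ricci soliton on a closed manifold; elliptic regularity in harmonic coordinates makes it smooth modulo diffeomorphism. I then argue as in the rigidity Corollary after Proposition~\ref{monotonicity}. On a closed manifold a gradient soliton with \(\lambda\leq0\) is Einstein, with \(f\) constant. Indeed, tracing and combining with the Euler--Lagrange equation \(2\Delta f-|\nabla f|^2+R=\lambda\) gives \(\Delta f-|\nabla f|^2=0\), that is \(\Delta e^{-f}=0\), so \(f\) is constant. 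Hence \(\operatorname{Ric}_{g_\infty}=\frac{\lambda}{3}g_\infty\). Since \(\widetilde M\) is hyperbolic it has negative Yamabe type, so \(\lambda<0\): the value \(\lambda=0\) would give a Ricci-flat, hence flat, metric on a hyperbolic manifold, which is impossible. In dimension three such a metric has constant negative sectional curvature. By Mostow rigidity \(g_\infty=c\,\phi^*h\) for some \(c>0\) and diffeomorphism \(\phi\) (after upgrading \(\phi\) to \(\operatorname{Diff}^{s+1}\) by regularity of isometries). Thus \(g_\infty\in\mathscr H_h\subset\mathcal U\), contradicting \(g_\infty\notin\mathcal U\). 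This yields \(\gamma_{\mathcal U}>0\).
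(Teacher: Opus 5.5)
Your argument is correct and follows the paper's proof essentially step for step: contradiction, $H^s$ compactness modulo diffeomorphisms, openness and invariance of $\mathcal U$ to get $g_\infty\notin\mathcal U$, continuity of the simple ground state to get $\mathcal N(g_\infty)=0$, tracing the soliton equation against the Euler--Lagrange equation to force $f$ constant, and then Einstein, constant curvature and Mostow. There are two harmless slips, neither of which is used, since you only need diffeomorphism invariance of $\mathcal N$ and the limit metric supplied by hypothesis (2). First, $\mathcal N$ is not scale invariant: $e^{-f}d\mu$ is a probability measure, so $\mathcal N(cg)=c^{-1/2}\mathcal N(g)$. Second, the volume bounds by themselves do not make $g_\infty$ positive definite; that comes from hypothesis (2).
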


\begin{proof}
Suppose that \eqref{eq:uniform-soliton-defect-gap} were false.
Then there would exist \(g_j\in\mathscr K\setminus\mathcal U\)
such that
\[
        \mathcal N(g_j)\longrightarrow0.
\]
By the compactness assumption, after passing to a subsequence and
pulling back by diffeomorphisms, we may assume that
\[
        g_j\longrightarrow g_\infty
        \qquad\text{in }H^s.
\]
Since \(\mathcal U\) is diffeomorphism invariant and open,
\[
        g_\infty\notin\mathcal U.
\]
Now note that the lowest eigenvalue of
\[
        -4\Delta_g+R(g)
\]
is simple. Hence standard elliptic perturbation theory, together
with \(s>7/2\), gives
\[
        \lambda[g_j]\longrightarrow\lambda[g_\infty],
        \qquad
        f_{g_j}\longrightarrow f_{g_\infty}
\]
in the Sobolev topologies required to pass to the soliton-defect
tensor. Consequently,
\[
        \mathcal N(g_\infty)=0,
\]
and therefore
\begin{equation}
\label{eq:limit-gradient-soliton}
        \operatorname{Ric}_{g_\infty}
        +
        \nabla_{g_\infty}^2f_{g_\infty}
        =
        \frac{\lambda[g_\infty]}{3}g_\infty.
\end{equation}
Elliptic regularity implies that \(g_\infty\) and \(f_{g_\infty}\)
are smooth. Now taking the trace of
\eqref{eq:limit-gradient-soliton} gives
\begin{equation}
\label{eq:trace-limit-gradient-soliton}
        R(g_\infty)
        +
        \Delta_{g_\infty}f_{g_\infty}
        =
        \lambda[g_\infty].
\end{equation}
On the other hand, the Euler--Lagrange equation for the normalized
\(\lambda\)-minimizer is
\begin{equation}
\label{eq:lambda-minimizer-limit}
        2\Delta_{g_\infty}f_{g_\infty}
        -
        |\nabla f_{g_\infty}|_{g_\infty}^2
        +
        R(g_\infty)
        =
        \lambda[g_\infty].
\end{equation}
Subtracting \eqref{eq:trace-limit-gradient-soliton} from
\eqref{eq:lambda-minimizer-limit}, we obtain
\[
        \Delta_{g_\infty}f_{g_\infty}
        =
        |\nabla f_{g_\infty}|_{g_\infty}^2.
\]
Integration over the closed manifold \(\widetilde M\) yields
\[
        \int_{\widetilde M}
        |\nabla f_{g_\infty}|_{g_\infty}^2
        \,d\mu_{g_\infty}
        =0.
\]
Thus \(f_{g_\infty}\) is constant, and
\eqref{eq:limit-gradient-soliton} reduces to
\[
        \operatorname{Ric}_{g_\infty}
        =
        \frac{\lambda[g_\infty]}{3}g_\infty.
\]
Hence \(g_\infty\) is Einstein. In dimension three it has constant
sectional curvature. Since \(\widetilde M\) is a closed hyperbolic
manifold, the curvature is negative, and Mostow rigidity gives
\[
        g_\infty=c\,\phi^*h
\]
for some \(c>0\) and
\(\phi\in\operatorname{Diff}^{s+1}(\widetilde M)\). Therefore
\(g_\infty\in\mathscr H_h\subset\mathcal U\), contradicting
\(g_\infty\notin\mathcal U\).
\end{proof}

\noindent Now we arrive at the final proposition, which will help complete the proof of the main theorem.

\begin{proposition}
\label{prop:flowback-c0-comparison-map}
Under the hypotheses of Proposition~\ref{prop:entropy-good-time-c0-hyperbolic-borel} and
Remarks~6-9, let
\[
        \bigl(\mathcal M_i,\mathfrak t,g_i(t)\bigr)
\]
be the corresponding normalized Ricci-flow spacetimes with surgery.
Let \(M_i(s)=\mathfrak t^{-1}(s)\), and let \(g_i(s)\) be the induced metric.
Assume that there exist \(A_i\to\infty\), with \(2A_i\leq T_i\), and regular
times
$t_i\in[A_i,2A_i]$.
Assume that on the good slice \(M_i(t_i)\) there exist 
$G_i^t\subset M_i(t_i),~
        Z_i^t:=M_i(t_i)\setminus G_i^t$,
smooth compact domains \(K_i^t\subset M\), and diffeomorphisms
\[
        \psi_i^t:K_i^t\longrightarrow G_i^t
\]
such that
\[
        \operatorname{Vol}_{g_i(t_i)}(Z_i^t)\leq b_i^t,
        b_i^t\to0,
\]
and
\[
        \operatorname{Vol}_{h}(M\setminus K_i^t)\to0.
\]
Assume moreover that the terminal \(C^0\)-comparison holds:
\[
        \bigl\|(\psi_i^t)^*g_i(t_i)-h\bigr\|_{C^0(K_i^t,h)}
        \leq
        \varepsilon_i^t,
        \qquad
        \varepsilon_i^t\to0.
\]
Let \(\mathcal S_i^0\subset M_i(0)\) be the set of points whose worldlines
survive until time \(t_i\), and let
\[
        P_i:=P_{0,t_i}:\mathcal S_i^0\longrightarrow M_i(t_i)
\]
be the survival map. Assume that, after enlarging \(Z_i^t\) by a set of
\(g_i(t_i)\)-volume at most \(r_i^t\to0\), one has
$G_i^t\subset P_i(\mathcal S_i^0)$.
Then there exist
$G_i^0\subset M_i(0),        
        Z_i^0:=M_i(0)\setminus G_i^0$,
subsets \(K_i^0\subset K_i^t\), and maps
$\psi_i^0:K_i^0\longrightarrow G_i^0$
which are diffeomorphisms onto their images, such that
\[
        \operatorname{Vol}_{g_i(0)}(Z_i^0)\to0,
        \qquad
        \operatorname{Vol}_{h}(M\setminus K_i^0)\to0,
\]
and
\[
        \bigl\|(\psi_i^0)^*g_i(0)-h\bigr\|_{C^0(K_i^0,h)}
        \to0,
\]
i.e.,
\[
        e^{-2\eta_i}(1-\varepsilon_i^t)h
        \leq
        (\psi_i^0)^*g_i(0)
        \leq
        e^{2\eta_i}(1+\varepsilon_i^t)h
\]
on \(K_i^0\).
\end{proposition}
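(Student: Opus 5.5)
The plan is to follow the scheme sketched in the introduction: pull the terminal good set back along worldlines, and then remove two bad sets of small initial volume. One bad set carries large accumulated scalar defect; the other carries large accumulated Einstein distortion.

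\textbf{Step 1: the candidate good set.} Set $\widetilde G_i^0:=P_i^{-1}(G_i^t)\subset\mathcal S_i^0$. The volume form evolves by $\partial_t d\mu=-Q_i\,d\mu$ on regular intervals, with $Q_i\geq0$ by Proposition~\ref{scalar1}. Writing $J_i(t,x)$ for the Jacobian along worldlines, we get $J_i=\exp(-\int_0^tQ_i)\leq1$. The complement of $\widetilde G_i^0$ splits into two parts. The first consists of points that do not survive; their initial volume is bounded by the total volume removed at surgeries, which is at most $\delta_i$ (Proposition~\ref{prop:entropy-good-time-c0-hyperbolic-borel}), plus the volume lost through $Q_i$, which is also at most $\delta_i$. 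The second consists of points landing in $Z_i^t$ or in the enlargement of $Z_i^t$.

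Initial volume does not exceed terminal volume pointwise, so this second estimate needs care. I would use the identity
\[
\operatorname{Vol}_{g_i(0)}(P_i^{-1}A)-\operatorname{Vol}_{g_i(t_i)}(A)=\int_{P_i^{-1}A}(1-J_i)\,d\mu_{g_i(0)}\leq\int_0^{t_i}\!\!\int Q_i+\text{surgery loss}\leq 2\delta_i.
\]
This gives $\operatorname{Vol}_{g_i(0)}(P_i^{-1}(Z_i^t))\leq b_i^t+r_i^t+2\delta_i\to0$. Next, choose $\kappa_i\to0$ with $\delta_i/\kappa_i\to0$ and remove $\mathcal B_i^Q$ exactly as in the introduction. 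The Chebyshev bound $\delta_i/(1-e^{-\kappa_i})$ applies, and $e^{-\kappa_i}\leq J_i\leq1$ holds on what remains.

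\textbf{Step 2: the path-length estimate, which is the main obstacle.} We need $\mathcal L_i=\int_0^{t_i}\|E_i\|_{L^2}\,dt\to0$ on the good tube, in the entropy gauge of Remark~\ref{rem:entropy-gauge-good-tube}. I would split $[0,t_i]$ into two kinds of times.

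\emph{Times with $\widetilde g_i(t)\notin\mathcal U$.} Lemma~\ref{lem:uniform-soliton-defect-gap} gives $\mathcal N\geq\gamma_{\mathcal U}$. The entropy dissipation $-\mathcal D'\geq2\mathcal N^2$ then bounds the total measure of such times by $C\delta_i/\gamma_{\mathcal U}^2$. The contribution of $\int\|E\|$ over these times is then handled by Cauchy--Schwarz against the $L^2_tL^2_x$ bound of Proposition~\ref{integraldefect1}, which gives $O(\delta_i^{1/2}\cdot\delta_i^{1/2}/\gamma)$.

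\emph{Times with $\widetilde g_i(t)\in\mathcal U$.} On each such connected interval, Proposition~\ref{LSpath} applies. Since $\mathcal D$ is monotone (up to surgery jumps absorbed in $\delta_i$), summing $\mathcal D(a)^\theta-\mathcal D(b)^\theta$ over the intervals telescopes to $C(C_h\delta_i)^\theta$.

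The scalar term is at most $\delta_i$ by \eqref{eq:late-scalar-budget}. Hence $\mathcal L_i\lesssim\delta_i^\theta+\delta_i\to0$. The delicate points here are three: verifying the compactness hypothesis of Lemma~\ref{lem:uniform-soliton-defect-gap} for the closed good tube metrics (I would invoke the post-surgery smoothing and Shi estimates); the finitely many surgery times; and keeping the gauge diffeomorphisms $\varphi_i(t)$ compatible with the worldline map.

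\textbf{Step 3: distortion bad set and the comparison.} Choose $\eta_i\to0$ with $\mathcal L_i/\eta_i\to0$. Define $\mathcal A_i(x)=\int_0^{t_i}|\widetilde{\mathcal E}_i|(\cdot)\,dt$ along gauged worldlines. Minkowski's inequality together with the Jacobian bound gives $\|\mathcal A_i\|_{L^2(g_i(0))}\leq e^{\kappa_i/2}\mathcal L_i$, and Chebyshev then bounds the volume of $\{\mathcal A_i>\eta_i\}$ by $e^{\kappa_i}\mathcal L_i^2/\eta_i^2\to0$.

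On the final good set $G_i^0$, integrating $\partial_t\widehat g=-2\widehat{\mathcal E}$ pointwise as a quadratic-form ODE gives $e^{-2\eta_i}g_i(0)\leq\Phi_i^*g_i(t_i)\leq e^{2\eta_i}g_i(0)$, where $\Phi_i$ is the composite survival map in the gauge. Set $K_i^0:=(\psi_i^t)^{-1}(\Phi_i(G_i^0))$ and $\psi_i^0:=\Phi_i^{-1}\circ\psi_i^t$. Combining with the terminal bound $(1-\varepsilon_i^t)h\leq(\psi_i^t)^*g_i(t_i)\leq(1+\varepsilon_i^t)h$ yields the stated two-sided inequality.

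Finally, $\operatorname{Vol}_h(M\setminus K_i^0)\to0$. The reason is that $\operatorname{Vol}_h(K_i^t\setminus K_i^0)$ is comparable, via the $C^0$ bound, to the $g_i(t_i)$-volume of the image of the removed sets, and that is at most their $g_i(0)$-volume since $J_i\leq1$.
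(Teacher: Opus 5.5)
Your overall architecture matches the paper's. You split $[0,t_i]$ by whether the metric lies in $\mathcal U$, use the gradient gap off $\mathcal U$ and Lojasiewicz telescoping on $\mathcal U$, and finish with Minkowski, Chebyshev and the quadratic-form ODE. The genuine gap is in Step 3, which mixes two different Jacobians.

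Your bound $e^{-\kappa_i}\le J_i\le1$ is for the unmodified worldline map, $J_i=\exp(-\int_0^tQ_i)$. But $\mathcal A_i$ integrates $|\widetilde{\mathcal E}_i|$ (the paper's $\widetilde{\mathcal D}_i$) along the \emph{gauged} worldlines $\Psi_i(t,x)$. The change of variables inside Minkowski's inequality,
\[
\int_{G}|\widetilde{\mathcal E}_i(t)|^2\bigl(\Psi_i(t,x)\bigr)\,d\mu_{g_i(0)}(x)
=\int_{\Psi_i(t)(G)}|\widetilde{\mathcal E}_i(t)|^2\,J_{\Psi}^{-1}\,d\mu_{g_i(t)},
\]
needs a lower bound on the gauged Jacobian, and that Jacobian satisfies $\partial_t\log J_\Psi=-(\tr\,\widetilde{\mathcal E}_i)\circ\Psi_i=-(Q_i+\Delta\widetilde f_i)\circ\Psi_i$. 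The $\Delta\widetilde f_i$ term has no sign, and nothing in your argument controls its integral along a worldline. Your set $\mathcal B_i^Q$ is defined along ungauged worldlines and says nothing about it. So $\|\mathcal A_i\|_{L^2}\le e^{\kappa_i/2}\mathcal L_i$ does not follow.

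You cannot retreat to ungauged worldlines either, because the Lojasiewicz path-length bound controls $\operatorname{Ric}+\nabla^2\widetilde f_i+2\widetilde g_i$, not $\operatorname{Ric}+2g$. The only pointwise control available, $|\log J_\Psi|\le\sqrt3\,\mathcal A_i$, is circular. The paper breaks the circularity with a stopping time: $\tau_i(x)$ is the first time the accumulated $|\widetilde{\mathcal D}_i|$ reaches $\eta_i$. Before $\tau_i(x)$ the gauged Jacobian lies in $[e^{-\sqrt3\eta_i},e^{\sqrt3\eta_i}]$, so Minkowski gives $\|\mathcal A_i^\sharp\|_{L^2(g_i(0))}\le e^{\sqrt3\eta_i/2}L_i$ for the stopped accumulation. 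That quantity equals $\eta_i$ on the bad set $B_i^0$, so Chebyshev applies.

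The same mismatch breaks your final paragraph. The time-$t_i$ images of the removed sets are taken under the gauged map, whose Jacobian is uncontrolled exactly on the distortion bad set, so ``at most their $g_i(0)$-volume since $J_i\le1$'' is unjustified. The paper instead bounds $\operatorname{Vol}_{g_i(t_i)}(B_i^t)$ by rerunning the stopped argument on the time-reversed family. Alternatively, once $J_\Psi\ge e^{-\sqrt3\eta_i}$ on $G_i^0$, you get $\operatorname{Vol}_{g_i(t_i)}(\Phi_i(G_i^0))\ge e^{-\sqrt3\eta_i}\operatorname{Vol}_{g_i(0)}(G_i^0)\to V_h$. Since the total terminal volume is at most $V_h+\delta_i$, the complement of $\Phi_i(G_i^0)$ in $G_i^t$ has vanishing volume.

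There are two smaller omissions.
\begin{enumerate}
\item Off $\mathcal U$, your Cauchy--Schwarz step bounds only the weighted quantity $\int\mathcal N\,dt$. Converting it to the unweighted $\|\widetilde{\mathcal E}_i\|_{L^2}$ requires uniform bounds $c_f\le e^{-\widetilde f_i}\le C_f$ for metrics far from $h$; this is the Harnack Lemma~\ref{lem:uniform-perelman-ground-state}. The measure equivalence in Proposition~\ref{LSpath} holds only on $\mathcal U$.
\item Over the long interval $[0,t_i]$, the gauge flow of $-\nabla\widetilde f_i$ can push good-tube points into the artificial closing regions, where $\widetilde g_i\neq g_i$. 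The paper handles this with nested tubes $\mathcal G_i^-\subset\mathcal G_i^+$ and a cutoff, which keeps the exact gauged evolution on the inner tube.
\end{enumerate}
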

\begin{proof}
Since \(A_i\to\infty\) and \(t_i\in[A_i,2A_i]\), one has
\(t_i\to\infty\). All estimates below are uniform in the length of the
interval \([0,t_i]\). This uniform estimate is the most difficult part of the proof. 

\noindent After replacing \(K_i^t\) by a slightly smaller smooth domain and
enlarging \(Z_i^t\) by a set of \(g_i(t_i)\)-volume \(o(1)\), choose
smooth compact domains
\[
        K_i^-\Subset K_i^+\Subset K_i^t
\]
such that
\[
        \operatorname{Vol}_h(K_i^t\setminus K_i^-)\to0,
\]
and set
\[
        G_i^{t,\pm}:=\psi_i^t(K_i^\pm).
\]
Enlarging the terminal bad set once more by a set of volume \(o(1)\), we
may assume that every point of \(G_i^{t,+}\) has a worldline surviving
on \([0,t_i]\), and that its backward saturation is disjoint from all
surgery regions. Define the regular buffered tubes
\[
        G_i^\pm(t):=P_{t,t_i}^{-1}(G_i^{t,\pm}),
        \qquad
        \mathcal G_i^\pm
        :=
        \bigcup_{0\leq t\leq t_i}G_i^\pm(t).
\]
The buffer between \(\mathcal G_i^-\) and the lateral boundary of
\(\mathcal G_i^+\) permits all diffeomorphism modifications to be
performed on the outer tube while retaining the exact evolution
equation on the inner tube.

\noindent We use the closed good-component construction associated with
\(\mathcal G_i^+\). Denote the corresponding closed normalized flow by
$(\widetilde M_i,\widetilde g_i(t)),
0\leq t\leq t_i$,
which agrees with \(g_i(t)\) on \(G_i^+(t)\).

\noindent Choose a smooth cutoff on \(\mathcal G_i^+\), equal to one on
\(\mathcal G_i^-\) and vanishing near the lateral boundary, and let
\(\Psi_i(t)\) be the composition of the spacetime identification with
the diffeomorphisms generated by the resulting cutoff of
\(-\nabla\widetilde f_i(t)\). Thus
\[
        \Psi_i(t):G_i^+(0)\longrightarrow G_i^+(t),
        \qquad
        \Psi_i(0)=\operatorname{Id},
\]
and on \(G_i^-(0)\),
\begin{equation}
\label{eq:modified-flow-metric-evolution}
        \partial_t\bigl(\Psi_i(t)^*g_i(t)\bigr)
        =
        -2\Psi_i(t)^*\widetilde{\mathcal D}_i(t).
\end{equation}
In particular, no separate estimate for
\(\nabla^2\widetilde f_i\) is required. This is the main reason behind using the diffeomorphism-modified flow. This construction is extremely important. 

\noindent
All entropy quantities below are computed on the closed completed flow
\[
        \bigl(\widetilde M_i,\widetilde g_i(t)\bigr),
        \qquad 0\leq t\leq t_i .
\]
Let \(\widetilde f_i(t)\) be the minimizer of Perelman's
\(\lambda\)-functional, normalized by
\[
        \int_{\widetilde M_i}
        e^{-\widetilde f_i(t)}
        \,d\mu_{\widetilde g_i(t)}
        =1,
\]
and set
\[
        \widetilde\lambda_i(t)
        :=
        \lambda[\widetilde g_i(t)],
        \qquad
        \widetilde S_i(t)
        :=
        \operatorname{Ric}_{\widetilde g_i(t)}
        +
        \nabla_{\widetilde g_i(t)}^{2}\widetilde f_i(t)
        -
        \frac{\widetilde\lambda_i(t)}{3}
        \widetilde g_i(t).
\]
Define
\[
        \widetilde e_i(t)
        :=
        \overline\lambda[h]
        -
        \overline\lambda[\widetilde g_i(t)]
\]
and
\[
        \widetilde{\mathcal N}_i(t)
        :=
        \operatorname{Vol}_{\widetilde g_i(t)}
        (\widetilde M_i)^{1/3}
        \left(
        \int_{\widetilde M_i}
        |\widetilde S_i(t)|_{\widetilde g_i(t)}^{2}
        e^{-\widetilde f_i(t)}
        \,d\mu_{\widetilde g_i(t)}
        \right)^{1/2}.
\]
On every regular time interval, entropy monotonicity gives
\begin{equation}
\label{eq:closed-entropy-dissipation}
        -\frac{d}{dt}\widetilde e_i(t)
        \geq
        c_0\widetilde{\mathcal N}_i(t)^2,
\end{equation}
where \(c_0>0\) is independent of \(i\).

\noindent Let \(\mathcal T_i\subset(0,t_i)\) denote the surgery times.  Choosing
the surgery and closing parameters sufficiently precise, we may assume
that
\[
        \rho_i
        :=
        \sum_{s\in\mathcal T_i}
        \bigl[
        \widetilde e_i(s^+)-\widetilde e_i(s^-)
        \bigr]_+
        \longrightarrow0
\]
and
\[
        \sigma_i
        :=
        \sum_{s\in\mathcal T_i}
        \bigl[
        \widetilde e_i(s^+)^\theta
        -
        \widetilde e_i(s^-)^\theta
        \bigr]_+
        \longrightarrow0.
\]
By Proposition~2.4 and the closing-volume estimate,
\[
        \varepsilon_i
        :=
        \widetilde e_i(0)+\rho_i
        \longrightarrow0.
\]
Consequently,
\begin{equation}
\label{eq:closed-entropy-budget}
        \sup_{0\leq t\leq t_i}\widetilde e_i(t)
        \leq\varepsilon_i,
        \qquad
        \int_0^{t_i}
        \widetilde{\mathcal N}_i(t)^2\,dt
        \leq
        \frac{\varepsilon_i}{c_0},
\end{equation}
where, here and below, time integrals are taken over the regular
portions of the surgical flow.

\noindent Let \(\phi_i(t)\) be generated on \(\widetilde M_i\) by
\[
        \frac{d}{dt}\phi_i(t)
        =
        -\nabla_{\widetilde g_i(t)}
        \widetilde f_i(t)\circ\phi_i(t),
        \qquad
        \phi_i(0)=\operatorname{Id},
\]
and define
\[
        \widehat{\widetilde g}_i(t)
        :=
        \phi_i(t)^*\widetilde g_i(t).
\]
By the terminal smooth convergence and the choice of the closed
completion,
$\widehat{\widetilde g}_i(t_i)\in\mathcal U,$
where \(\mathcal U\) is the Sobolev neighborhood of the hyperbolic orbit
appearing in Proposition~\ref{LSpath}.  Set
\[
        \mathcal I_i
        :=
        \left\{
        t\in[0,t_i]:
        \widehat{\widetilde g}_i(t)\in\mathcal U
        \right\},
        \qquad
        \mathcal O_i
        :=
        [0,t_i]\setminus\mathcal I_i .
\]
For every regular \(t\in\mathcal I_i\), Proposition~\ref{LSpath}
implies
\begin{equation}
\label{eq:LS-on-pulled-back-core}
        \widetilde e_i(t)^{1-\theta}
        \leq
        C_{\mathrm{LS}}
        \widetilde{\mathcal N}_i(t),
\end{equation}
where \(C_{\mathrm{LS}}<\infty\) and
\(\theta\in(0,\frac12]\) are independent of \(i\).

\noindent Fix \(\varepsilon_0>0\), and let
\[
        \mathscr K_{\varepsilon_0}
        :=
        \left\{
        \widehat{\widetilde g}_i(t):
        i\geq i_0,\quad
        t\in[0,t_i]\ \text{regular},\quad
        \widetilde e_i(t)\leq\varepsilon_0
        \right\}.
\]
By the uniform bounded-geometry estimates for the closed completed
good components and the corresponding Cheeger--Gromov compactness
theorem, after fixing the \(H^s\)-gauge used in
Proposition~\ref{LSpath}, the class
\(\mathscr K_{\varepsilon_0}\) is relatively compact in \(H^s\)
modulo \(\operatorname{Diff}^{s+1}(\widetilde M)\). Its volumes are
uniformly bounded above and below. Lemma
\ref{lem:uniform-soliton-defect-gap} therefore gives a constant
\(\gamma_{\mathcal U}>0\), independent of \(i\) and \(t\), such
that
\begin{equation}
\label{eq:uniform-gradient-gap-away-from-U}
        \widehat{\widetilde g}_i(t)\notin\mathcal U,
        \qquad
        \widetilde e_i(t)\leq\varepsilon_0
        \quad\Longrightarrow\quad
        \widetilde{\mathcal N}_i(t)
        \geq\gamma_{\mathcal U}.
\end{equation}
For all sufficiently large \(i\), one has
\(\varepsilon_i<\varepsilon_0\) by \eqref{eq:closed-entropy-budget}.
Hence \eqref{eq:uniform-gradient-gap-away-from-U} applies at every
regular time \(t\in\mathcal O_i\). From \eqref{eq:closed-entropy-budget},
\begin{equation}
\label{eq:outside-core-L1}
\begin{aligned}
        |\mathcal O_i|
        &\leq
        \frac{\varepsilon_i}
        {c_0\gamma_{\mathcal U}^{\,2}},\int_{\mathcal O_i}
        \widetilde{\mathcal N}_i(t)\,dt
        \leq
        \frac{\varepsilon_i}
        {c_0\gamma_{\mathcal U}}.
\end{aligned}
\end{equation}
On every regular connected component of \(\mathcal I_i\) on which
\(\widetilde e_i>0\), equations
\eqref{eq:closed-entropy-dissipation} and
\eqref{eq:LS-on-pulled-back-core} yield
\[
        -\frac{d}{dt}\widetilde e_i(t)^\theta
        \geq
        \frac{c_0\theta}{C_{\mathrm{LS}}}
        \widetilde{\mathcal N}_i(t).
\]
Summing over the regular components of \(\mathcal I_i\), and accounting
for the entropy jumps at surgery times, gives
\begin{equation}
\label{eq:inside-core-L1}
        \int_{\mathcal I_i}
        \widetilde{\mathcal N}_i(t)\,dt
        \leq
        \frac{C_{\mathrm{LS}}}{c_0\theta}
        \left(
        \widetilde e_i(0)^\theta+\sigma_i
        \right).
\end{equation}
Combining \eqref{eq:outside-core-L1} and
\eqref{eq:inside-core-L1}, we obtain
\begin{equation}
\label{eq:core-soliton-path}
        \int_0^{t_i}
        \widetilde{\mathcal N}_i(t)\,dt
        \leq
        \frac{C_{\mathrm{LS}}}{c_0\theta}
        \left(
        \widetilde e_i(0)^\theta+\sigma_i
        \right)
        +
        \frac{\varepsilon_i}
        {c_0\gamma_{\mathcal U}}
        \longrightarrow0.
\end{equation}
In particular, the bound is independent of \(t_i\). This is the main reason we need these additional estimates so that the relevant norm of the deformation tensor is uniform in time. 

\noindent The uniform bounded-geometry estimates on the buffered completed flow,
together with the ground-state equation
\[
        -4\Delta_{\widetilde g_i(t)}
        e^{-\widetilde f_i(t)/2}
        +
        R(\widetilde g_i(t))
        e^{-\widetilde f_i(t)/2}
        =
        \widetilde\lambda_i(t)
        e^{-\widetilde f_i(t)/2},
\]
and the elliptic Harnack and Schauder estimates, give
\begin{equation}
\label{eq:entropy-density-equivalence-tube}
        0<c_f
        \leq
        e^{-\widetilde f_i(t)}
        \leq
        C_f<\infty
        \qquad
        \text{on }G_i^-(t),\quad
        0\leq t\leq t_i,
\end{equation}
with constants independent of \(i\) and of the regular time \(t\).
Since the volumes of the completed components are uniformly bounded
above and below, it follows that
\begin{equation}
\label{eq:unweighted-soliton-bound}
        \|\widetilde S_i(t)\|
        _{L^2(G_i^-(t),\,\widetilde g_i(t))}
        \leq
        C\widetilde{\mathcal N}_i(t).
\end{equation}
We provide a detailed account of these estimates involving Harnack and Schauder estimates in the next lemma \ref{lem:uniform-perelman-ground-state}, proven separately. 

\noindent Moreover, since \(R(\widetilde g_i(t))\geq-6\),
\[
        0\leq\widetilde\lambda_i(t)+6
        \leq
        \frac{1}
        {\operatorname{Vol}_{\widetilde g_i(t)}(\widetilde M_i)}
        \int_{\widetilde M_i}
        \bigl(R(\widetilde g_i(t))+6\bigr)
        \,d\mu_{\widetilde g_i(t)}.
\]
The scalar-defect budget on the closed completed component therefore
implies
\begin{equation}
\label{eq:scalar-mode-time-integral}
        \int_0^{t_i}
        \bigl(\widetilde\lambda_i(t)+6\bigr)\,dt
        \longrightarrow0.
\end{equation}
Finally, define
\[
        \widetilde{\mathcal D}_i(t)
        :=
        \operatorname{Ric}_{\widetilde g_i(t)}
        +
        \nabla_{\widetilde g_i(t)}^2\widetilde f_i(t)
        +
        2\widetilde g_i(t).
\]
Then
\[
        \widetilde{\mathcal D}_i
        =
        \widetilde S_i
        +
        \frac{\widetilde\lambda_i+6}{3}
        \widetilde g_i.
\]
Since
the volumes of \(G_i^-(t)\) are uniformly bounded, equations
\eqref{eq:core-soliton-path},
\eqref{eq:unweighted-soliton-bound}, and
\eqref{eq:scalar-mode-time-integral} yield
\begin{equation}
\label{eq:modified-core-path-length}
\begin{aligned}
        L_i
        &:=
        \int_0^{t_i}
        \|\widetilde{\mathcal D}_i(t)\|
        _{L^2(G_i^-(t),\,\widetilde g_i(t))}
        \,dt                                                    \\
        &\leq
        C\int_0^{t_i}
        \widetilde{\mathcal N}_i(t)\,dt
        +
        C\int_0^{t_i}
        \bigl(\widetilde\lambda_i(t)+6\bigr)\,dt
        \longrightarrow0.
\end{aligned}
\end{equation}

\noindent
We now return to the cutoff flow \(\Psi_i(t)\) introduced above and
use the path-length estimate \eqref{eq:modified-core-path-length}.
For \(0\leq t\leq t_i\), set
\[
        \Theta_i(t)
        :=
        \Psi_i(t)\circ\Psi_i(t_i)^{-1}\circ\psi_i^t
        :
        K_i^+\longrightarrow G_i^+(t).
\]
Then
\[
        \Theta_i(t_i)=\psi_i^t,
        \qquad
        \widehat g_i(t):=\Theta_i(t)^*g_i(t),
\]
and the cutoff is identically one on the inner tube \(G_i^-(t)\), where
the deformation tensor is precisely
\(\widetilde{\mathcal D}_i(t)\). \noindent
Since the volumes of the completed components are uniformly bounded,
Hölder's inequality gives
\[
        \|\widetilde{\mathcal D}_i(t)\|
        _{L^1(G_i^-(t),g_i(t))}
        \leq
        C\|\widetilde{\mathcal D}_i(t)\|
        _{L^2(G_i^-(t),g_i(t))}.
\]
Thus the \(L_t^1L_x^2\)-estimate
\eqref{eq:modified-core-path-length} implies the corresponding
\(L_t^1L_x^1\)-estimate. We retain the \(L^2\)-formulation, which is
suited directly to the stopping-time argument below.

\noindent
We now pull the terminal comparison back to the initial slice. Set
\[
        \check g_i(t):=\Psi_i(t)^*g_i(t)
        \qquad\text{on }G_i^-(0),
\]
and define \(J_i(t,x)\) by
\[
        d\mu_{\check g_i(t)}(x)
        =
        J_i(t,x)\,d\mu_{g_i(0)}(x).
\]
Taking the trace of
\eqref{eq:modified-flow-metric-evolution}, we obtain
\[
        \frac{d}{dt}\log J_i(t,x)
        =
        -\operatorname{tr}_{g_i(t)}
        \widetilde{\mathcal D}_i(t)
        \bigl(\Psi_i(t,x)\bigr).
\]
Consequently,
\begin{equation}
\label{eq:modified-Jacobian-control}
        |\log J_i(t,x)|
        \leq
        \sqrt{3}
        \int_0^t
        |\widetilde{\mathcal D}_i(s)|_{g_i(s)}
        \bigl(\Psi_i(s,x)\bigr)\,ds .
\end{equation}

\noindent
Let
\[
        \eta_i:=L_i^{1/2}+i^{-1},
\]
and, for \(x\in G_i^-(0)\), define
\[
        \tau_i(x)
        :=
        \inf\left\{
        t\in[0,t_i]:
        \int_0^t
        |\widetilde{\mathcal D}_i(s)|_{g_i(s)}
        \bigl(\Psi_i(s,x)\bigr)\,ds
        \geq\eta_i
        \right\},
\]
with the convention \(\tau_i(x)=t_i\) when the threshold is not
attained. Set
\[
        \mathcal A_i^\sharp(x)
        :=
        \int_0^{\tau_i(x)}
        |\widetilde{\mathcal D}_i(s)|_{g_i(s)}
        \bigl(\Psi_i(s,x)\bigr)\,ds
\]
and
\[
        E_i(t):=
        \{x\in G_i^-(0):t\leq\tau_i(x)\}.
\]
For \(x\in E_i(t)\), equation
\eqref{eq:modified-Jacobian-control} gives
\begin{equation}
\label{eq:stopped-Jacobian-bound}
        e^{-\sqrt{3}\eta_i}
        \leq
        J_i(t,x)
        \leq
        e^{\sqrt{3}\eta_i}.
\end{equation}
By Minkowski's integral inequality, followed by
\eqref{eq:stopped-Jacobian-bound} and change of variables under
\(\Psi_i(t)\),
\begin{align}
        \|\mathcal A_i^\sharp\|
        _{L^2(G_i^-(0),g_i(0))}
        &\leq
        \int_0^{t_i}
        \left(
        \int_{E_i(t)}
        |\widetilde{\mathcal D}_i(t)|_{g_i(t)}^2
        \bigl(\Psi_i(t,x)\bigr)
        \,d\mu_{g_i(0)}(x)
        \right)^{1/2}dt
        \notag\\
        &\leq
        e^{\frac{\sqrt{3}}{2}\eta_i}
        \int_0^{t_i}
        \|\widetilde{\mathcal D}_i(t)\|
        _{L^2(G_i^-(t),g_i(t))}\,dt
        \notag\\
        &=
        e^{\frac{\sqrt{3}}{2}\eta_i}L_i .
\label{eq:stopped-accumulation-L2}
\end{align}

\noindent
Define
\[
        B_i^0
        :=
        \left\{
        x\in G_i^-(0):
        \int_0^{t_i}
        |\widetilde{\mathcal D}_i(s)|_{g_i(s)}
        \bigl(\Psi_i(s,x)\bigr)\,ds
        \geq\eta_i
        \right\}.
\]
For \(x\in B_i^0\), continuity in time gives
\(\mathcal A_i^\sharp(x)=\eta_i\). Hence
\eqref{eq:stopped-accumulation-L2} yields
\begin{equation}
\label{eq:modified-initial-bad-volume}
\begin{aligned}
        \operatorname{Vol}_{g_i(0)}(B_i^0)
        &\leq
        e^{\sqrt{3}\eta_i}
        \frac{L_i^2}{\eta_i^2}
        \longrightarrow0 .
\end{aligned}
\end{equation}
Here we used \(\eta_i\geq L_i^{1/2}\), and therefore
\(L_i^2/\eta_i^2\leq L_i\).

\noindent
Set
\[
        B_i^t:=\Psi_i(t_i)(B_i^0)\subset G_i^-(t_i).
\]
Equivalently, writing
\[
        \Psi_i^{\,t_i}(s,y)
        :=
        \Psi_i(s)\circ\Psi_i(t_i)^{-1}(y),
\]
one has
\[
        B_i^t
        =
        \left\{
        y\in G_i^-(t_i):
        \int_0^{t_i}
        |\widetilde{\mathcal D}_i(s)|_{g_i(s)}
        \bigl(\Psi_i^{\,t_i}(s,y)\bigr)\,ds
        \geq\eta_i
        \right\}.
\]
Applying the preceding stopped argument to the reversed family
\(s\mapsto\Psi_i^{\,t_i}(t_i-s,\cdot)\) gives
\begin{equation}
\label{eq:modified-terminal-bad-volume}
        \operatorname{Vol}_{g_i(t_i)}(B_i^t)
        \leq
        e^{\sqrt{3}\eta_i}
        \frac{L_i^2}{\eta_i^2}
        \longrightarrow0 .
\end{equation}
We now also record the volume of the initial traces discarded before the
stopped argument. Let \(\beta_i^t\to0\) denote the total
\(g_i(t_i)\)-volume of the terminal bad set, the buffer
\(G_i^t\setminus G_i^{t,-}\), and the additional survival-removal set.
Choose \(\kappa_i\downarrow0\) so that
\[
        \frac{\delta_i}{\kappa_i}\to0.
\]
The ordinary Ricci-flow Jacobian along surviving worldlines satisfies
\[
        J_i^{\mathrm{RF}}(t,x)
        =
        \exp\left(
        -\int_0^t
        \bigl(R(g_i(s))+6\bigr)(P_{0,s}(x))\,ds
        \right).
\]
Using the scalar-defect budget and the total surgery volume-loss
estimate, one obtains
\begin{equation}
\label{eq:initial-trace-volume}
\begin{aligned}
        \operatorname{Vol}_{g_i(0)}
        \bigl(M_i(0)\setminus G_i^-(0)\bigr)
        &\leq
        \frac{\delta_i}{1-e^{-\kappa_i}}
        +
        e^{\kappa_i}\bigl(\beta_i^t+\delta_i\bigr)
        \longrightarrow0.
\end{aligned}
\end{equation}
The first term accounts for worldlines with accumulated scalar defect
larger than \(\kappa_i\), while the second accounts for terminally
discarded regions and worldlines removed at surgery. Now set
\[
        G_i^0:=G_i^-(0)\setminus B_i^0,
        H_i^t:=\Psi_i(t_i)(G_i^0)
        =
        G_i^{t,-}\setminus B_i^t,
\]
and
\[
        K_i^0:=(\psi_i^t)^{-1}(H_i^t)\subset K_i^-.
\]
Define
\[
        \psi_i^0
        :=
        \Psi_i(t_i)^{-1}\circ
        \psi_i^t\big|_{K_i^0}
        :
        K_i^0\longrightarrow G_i^0.
\]
These maps are restrictions of smooth diffeomorphisms and hence are
diffeomorphisms onto their images. From
\eqref{eq:modified-initial-bad-volume} and
\eqref{eq:initial-trace-volume},
\[
        \operatorname{Vol}_{g_i(0)}
        (M_i(0)\setminus G_i^0)\to0.
\]
Moreover, the terminal \(C^0\)-comparison and
\eqref{eq:modified-terminal-bad-volume} imply
\[
        \operatorname{Vol}_h(K_i^-\setminus K_i^0)\to0.
\]
Since
\[
        \operatorname{Vol}_h(M\setminus K_i^-)\to0,
\]
it follows that
\[
        \operatorname{Vol}_h(M\setminus K_i^0)\to0.
\]
Finally, for \(x\in G_i^0\) and \(0\neq v\in T_xG_i^0\),
\eqref{eq:modified-flow-metric-evolution} gives
\[
        \left|
        \frac{d}{dt}
        \log\bigl(\check g_i(t)_x(v,v)\bigr)
        \right|
        \leq
        2|\widetilde{\mathcal D}_i(t)|_{g_i(t)}
        (\Psi_i(t,x)).
\]
By the definition of \(G_i^0\), integration over \([0,t_i]\) yields
\[
        e^{-2\eta_i}g_i(0)
        \leq
        \Psi_i(t_i)^*g_i(t_i)
        \leq
        e^{2\eta_i}g_i(0),
\]
or equivalently,
\[
        e^{-2\eta_i}\Psi_i(t_i)^*g_i(t_i)
        \leq
        g_i(0)
        \leq
        e^{2\eta_i}\Psi_i(t_i)^*g_i(t_i)
\]
as quadratic forms.
Pulling this inequality back by \(\psi_i^0\), and using
$\Psi_i(t_i)\circ\psi_i^0=\psi_i^t,$
we obtain
\[
        e^{-2\eta_i}(\psi_i^t)^*g_i(t_i)
        \leq
        (\psi_i^0)^*g_i(0)
        \leq
        e^{2\eta_i}(\psi_i^t)^*g_i(t_i).
\]
Combining this with
\[
        (1-\varepsilon_i^t)h
        \leq
        (\psi_i^t)^*g_i(t_i)
        \leq
        (1+\varepsilon_i^t)h
\]
gives
\[
        e^{-2\eta_i}(1-\varepsilon_i^t)h
        \leq
        (\psi_i^0)^*g_i(0)
        \leq
        e^{2\eta_i}(1+\varepsilon_i^t)h
\]
on \(K_i^0\). Since
\(\eta_i\to0\) and \(\varepsilon_i^t\to0\),
\[
        \|(\psi_i^0)^*g_i(0)-h
        \|_{C^0(K_i^0,h)}
        \longrightarrow0.
\]
This completes the proof.
\end{proof}
\noindent Now we state and prove the following lemma regarding limit space used in the proof of the proposition \ref{prop:flowback-c0-comparison-map}.
\begin{lemma}
\label{lem:uniform-perelman-ground-state}
Let
\[
        \bigl(\widetilde M_i,\widetilde g_i(t)\bigr),
        \qquad 0\leq t\leq t_i,
\]
be the closed completed flows appearing in Proposition~\ref{prop:flowback-c0-comparison-map}.
Assume that there exist constants
$K<\infty,~\iota>0,~0<v_0\leq V_0<\infty$,
independent of \(i\) and of the regular time \(t\), such that
\begin{equation}
\label{eq:uniform-completed-bounded-geometry}
        \bigl\|
                \operatorname{Rm}_{\widetilde g_i(t)}
        \bigr\|_{L^\infty(\widetilde M_i,\widetilde g_i(t))}
        \leq K,
        \qquad
        \operatorname{inj}_{\widetilde g_i(t)}(\widetilde M_i)
        \geq \iota,
\end{equation}
and
\begin{equation}
\label{eq:uniform-completed-volume-bounds}
        v_0
        \leq
        \operatorname{Vol}_{\widetilde g_i(t)}(\widetilde M_i)
        \leq
        V_0.
\end{equation}
Let \(\widetilde f_i(t)\) be the normalized minimizer of
Perelman's \(\lambda\)-functional,
\[
        \int_{\widetilde M_i}
        e^{-\widetilde f_i(t)}
        \,d\mu_{\widetilde g_i(t)}
        =1,
\]
and write
\[
        \widetilde\lambda_i(t)
        :=
        \lambda[\widetilde g_i(t)].
\]
Then there exist constants
$ 0<c_f\leq C_f<\infty$,
depending only on \(K,\iota,v_0\), and \(V_0\), such that, for
every \(i\) and every regular \(t\in[0,t_i]\),
\begin{equation}
\label{eq:uniform-perelman-density-bound}
        c_f
        \leq
        e^{-\widetilde f_i(t)}
        \leq
        C_f
        \qquad\text{on }\widetilde M_i.
\end{equation}
In particular, the same estimate holds on \(G_i^-(t)\).
Moreover, if
\[
        \widetilde S_i(t)
        :=
        \operatorname{Ric}_{\widetilde g_i(t)}
        +
        \nabla_{\widetilde g_i(t)}^2\widetilde f_i(t)
        -
        \frac{\widetilde\lambda_i(t)}{3}
        \widetilde g_i(t)
\]
and
\[
        \widetilde{\mathcal N}_i(t)
        :=
        \operatorname{Vol}_{\widetilde g_i(t)}
        (\widetilde M_i)^{1/3}
        \left(
        \int_{\widetilde M_i}
        |\widetilde S_i(t)|_{\widetilde g_i(t)}^2
        e^{-\widetilde f_i(t)}
        \,d\mu_{\widetilde g_i(t)}
        \right)^{1/2},
\]
then
\begin{equation}
\label{eq:weighted-to-unweighted-soliton-defect}
        \|\widetilde S_i(t)\|_
        {L^2(G_i^-(t),\widetilde g_i(t))}
        \leq
        C\,\widetilde{\mathcal N}_i(t),
\end{equation}
where \(C\) is independent of \(i\) and \(t\).
\end{lemma}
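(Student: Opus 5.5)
We set $u_i:=e^{-\widetilde f_i(t)/2}$. It is the positive ground state of the Schrödinger operator $-4\Delta_{\widetilde g_i(t)}+R(\widetilde g_i(t))$ with eigenvalue $\widetilde\lambda_i(t)$, normalized by $\int u_i^2\,d\mu=1$. The plan is to obtain uniform two-sided bounds for $u_i$ in three steps: an $L^\infty$ upper bound by Moser iteration, then a lower bound by the elliptic Harnack inequality combined with the $L^2$ normalization, and finally the weighted-to-unweighted comparison, which is then immediate.

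\textbf{Step 1: coefficient bounds.} The hypothesis $\|\operatorname{Rm}\|_{L^\infty}\leq K$ gives $|R(\widetilde g_i(t))|\leq 6K$. Testing the Rayleigh quotient with constants and using $R\geq -6K$ gives $|\widetilde\lambda_i(t)|\leq 6K$. Hence $u_i$ solves
\[
\Delta u_i = c_i u_i, \qquad |c_i|\leq 3K/2 .
\]
The curvature bound together with $\operatorname{inj}\geq\iota$ yields harmonic coordinates on balls of a fixed radius $r_0=r_0(K,\iota)$, in which the metric is uniformly $C^{1,\alpha}$-controlled (Jost--Karcher/Anderson). On each such ball the equation is uniformly elliptic with uniformly bounded coefficients.

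\textbf{Step 2: upper bound.} We apply the local maximum principle (De Giorgi--Nash--Moser) on $B(x,r_0)$:
\[
\sup_{B(x,r_0/2)}u_i\leq C\Big(\int_{B(x,r_0)}u_i^2\Big)^{1/2}\leq C .
\]
This gives $C_f=C^2$.

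\textbf{Step 3: lower bound.} The Harnack inequality for positive solutions gives $\sup_{B(x,r_0/2)}u_i\leq H\inf_{B(x,r_0/2)}u_i$ with $H=H(K,\iota)$. We then chain balls. The volume upper bound $V_0$ and the lower bound $\operatorname{Vol}B(x,r_0/4)\geq c(K,\iota)$ give
\[
\operatorname{diam}(\widetilde M_i)\leq D(K,\iota,V_0)
\]
by a packing argument, since the manifold is connected. So any two points are joined by a chain of at most $N(K,\iota,V_0)$ overlapping balls of radius $r_0/2$, and $\sup u_i\leq H^N\inf u_i$. Finally, $1=\int u_i^2\leq V_0(\sup u_i)^2$ forces $\sup u_i\geq V_0^{-1/2}$, hence $\inf u_i\geq H^{-N}V_0^{-1/2}$. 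This gives $c_f$. Restricting to $G_i^-(t)\subset\widetilde M_i$ is trivial.

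\textbf{Step 4: the comparison estimate.} Using $e^{-\widetilde f_i}\geq c_f$,
\[
\int_{G_i^-(t)}|\widetilde S_i|^2\,d\mu\leq c_f^{-1}\int_{\widetilde M_i}|\widetilde S_i|^2e^{-\widetilde f_i}\,d\mu = c_f^{-1}\operatorname{Vol}^{-2/3}\widetilde{\mathcal N}_i^2\leq c_f^{-1}v_0^{-2/3}\widetilde{\mathcal N}_i^2 .
\]
Taking square roots yields the claim with $C=c_f^{-1/2}v_0^{-1/3}$.

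The main obstacle is the connectivity and diameter control in Step 3, since Harnack chaining requires a uniform bound on the number of balls. I would derive it from the volume upper bound and the noncollapsing lower bound on small-ball volumes; both follow from the assumed curvature and injectivity-radius bounds via Bishop--Gromov and Croke-type estimates. Everything else is standard elliptic theory with constants depending only on $K$, $\iota$, $v_0$, $V_0$.
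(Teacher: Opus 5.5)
Your proposal is correct and follows essentially the paper's argument: a bounded potential in the ground-state equation, a uniform local Harnack inequality on balls of radius $r_0(K,\iota)$ chained over a uniformly bounded number of balls, the $L^2$ normalization for the lower bound, and the same weighted-to-unweighted comparison. The only deviation is that you get the upper bound from the local maximum principle, whereas the paper derives it from the global Harnack inequality together with $\operatorname{Vol}\geq v_0$; both routes work. There is also a harmless arithmetic slip: from your bounds $|R|,|\widetilde\lambda_i|\leq 6K$ one gets $|c_i|\leq 3K$, not $3K/2$.
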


\begin{proof}
Set
\[
        \widetilde u_i(t)
        :=
        e^{-\widetilde f_i(t)/2}.
\]
Then \(\widetilde u_i(t)>0\),
\[
        \int_{\widetilde M_i}
        \widetilde u_i(t)^2
        \,d\mu_{\widetilde g_i(t)}
        =1,
\]
and \(\widetilde u_i(t)\) satisfies the ground-state equation
\begin{equation}
\label{eq:perelman-ground-state-equation}
        -4\Delta_{\widetilde g_i(t)}
        \widetilde u_i(t)
        +
        R(\widetilde g_i(t))\widetilde u_i(t)
        =
        \widetilde\lambda_i(t)\widetilde u_i(t).
\end{equation}
The curvature bound in
\eqref{eq:uniform-completed-bounded-geometry} gives
\[
        \|R(\widetilde g_i(t))\|_{L^\infty}
        \leq C(K).
\]
By the variational characterization of
\(\widetilde\lambda_i(t)\),
\[
        \inf_{\widetilde M_i}R(\widetilde g_i(t))
        \leq
        \widetilde\lambda_i(t)
        \leq
        \frac{1}{
        \operatorname{Vol}_{\widetilde g_i(t)}(\widetilde M_i)}
        \int_{\widetilde M_i}
        R(\widetilde g_i(t))
        \,d\mu_{\widetilde g_i(t)}.
\]
Consequently,
\[
        |\widetilde\lambda_i(t)|
        \leq C(K),
\]
and hence
\[
        \Delta_{\widetilde g_i(t)}\widetilde u_i(t)
        =
        q_i(t)\widetilde u_i(t),
        \qquad
        \|q_i(t)\|_{L^\infty}
        \leq C(K).
\]

\noindent The curvature and injectivity-radius bounds provide a radius
\(r_0=r_0(K,\iota)>0\) on which the local elliptic Harnack
inequality is uniform. Thus, for every
\(x\in\widetilde M_i\),
\[
        \sup_{B_{\widetilde g_i(t)}(x,r_0)}
        \widetilde u_i(t)
        \leq
        C_H
        \inf_{B_{\widetilde g_i(t)}(x,r_0)}
        \widetilde u_i(t),
\]
where \(C_H\) is independent of \(i,t\), and \(x\).

\noindent The same geometric bounds give a uniform positive lower bound
for the volume of every ball of radius \(r_0/4\). Together with
the upper volume bound in
\eqref{eq:uniform-completed-volume-bounds}, this implies that
\(\widetilde M_i\) admits a cover by at most \(N_0\) balls of
radius \(r_0\), where \(N_0\) is independent of \(i\) and \(t\).
Since \(\widetilde M_i\) is connected, iteration of the local
Harnack inequality along an overlapping chain of these balls
gives
\begin{equation}
\label{eq:global-ground-state-harnack}
        \sup_{\widetilde M_i}\widetilde u_i(t)
        \leq
        H
        \inf_{\widetilde M_i}\widetilde u_i(t),
\end{equation}
for a uniform constant \(H<\infty\) independent of $i$.

\noindent Using the normalization and
\eqref{eq:uniform-completed-volume-bounds}, we obtain
\[
\begin{split}
        1
        &\leq
        V_0
        \left(
        \sup_{\widetilde M_i}\widetilde u_i(t)
        \right)^2
        \leq
        V_0H^2
        \left(
        \inf_{\widetilde M_i}\widetilde u_i(t)
        \right)^2,
\end{split}
\]
and hence
\[
        \inf_{\widetilde M_i}\widetilde u_i(t)
        \geq
        \frac{1}{H\sqrt{V_0}}.
\]
Similarly,
\[
\begin{split}
        1
        &\geq
        v_0
        \left(
        \inf_{\widetilde M_i}\widetilde u_i(t)
        \right)^2
        \geq
        \frac{v_0}{H^2}
        \left(
        \sup_{\widetilde M_i}\widetilde u_i(t)
        \right)^2,
\end{split}
\]
so that
\[
        \sup_{\widetilde M_i}\widetilde u_i(t)
        \leq
        \frac{H}{\sqrt{v_0}}.
\]
Squaring these inequalities proves
\eqref{eq:uniform-perelman-density-bound}. Finally, using the lower bound in
\eqref{eq:uniform-perelman-density-bound}, we have
\[
\begin{split}
        \|\widetilde S_i(t)\|_
        {L^2(G_i^-(t),\widetilde g_i(t))}^2
        &\leq
        c_f^{-1}
        \int_{G_i^-(t)}
        |\widetilde S_i(t)|_{\widetilde g_i(t)}^2
        e^{-\widetilde f_i(t)}
        \,d\mu_{\widetilde g_i(t)}
        \\
        &\leq
        c_f^{-1}
        \operatorname{Vol}_{\widetilde g_i(t)}
        (\widetilde M_i)^{-2/3}
        \widetilde{\mathcal N}_i(t)^2
        \\
        &\leq
        c_f^{-1}v_0^{-2/3}
        \widetilde{\mathcal N}_i(t)^2.
\end{split}
\]
This proves
\eqref{eq:weighted-to-unweighted-soliton-defect}.
\end{proof}

\subsection{Proof of the main theorems}
\label{subsec:proof-main-theorems}

In this section we combine the propositions \ref{prop:entropy-good-time-c0-hyperbolic-borel}-\ref{prop:flowback-c0-comparison-map} proved in the previous two sections and provide the proof of the main theorems. We first prove the volume-stability theorem.  We state the argument in the
form needed below, namely as a tensorial convergence statement on large
good sets.  
\begin{proof}[Proof of Theorem \ref{main1}]
We give a complete proof and repeat some of the previous definitions for the sake of self-contained proof. First let
$(M,h)$
be the fixed closed hyperbolic three-manifold, normalized by
\[
        \operatorname{Ric}_{h}=-2h,
        \qquad
        R(h)=-6.
\]
Let \(g_i(0)\) be a sequence of smooth metrics on the same smooth manifold
\(M\) satisfying
\[
        R(g_i(0))\geq -6,
        \qquad
        \operatorname{Vol}_{g_i(0)}(M)
        =
        \operatorname{Vol}_{h}(M)+\delta_i,
        \qquad
        \delta_i\to0.
\]
Let \(g_i(t)\), \(t\in[0,T]\), be the corresponding normalized Ricci flows
\[
        \partial_t g_i
        =
        -2\bigl(\operatorname{Ric}_{g_i}+2g_i\bigr),
        \qquad
        g_i(0)=g_i.
\]
Set
\[
        Q_i(t):=R(g_i(t))+6,
        \qquad
        E_i(t):=\operatorname{Ric}_{g_i(t)}+2g_i(t).
\]
Then
\[
        (\partial_t-\Delta_{g_i}+4)Q_i
        =
        2|E_i|_{g_i}^{2}.
\]
Hence \(Q_i(0)\geq0\) implies
\[
        Q_i(t)\geq0
\]
for every \(t\in[0,T]\). The volume satisfies
\[
        \frac{d}{dt}\operatorname{Vol}_{g_i(t)}(M)
        =
        -\int_M Q_i(t)\,d\mu_{g_i(t)}.
\]
Since \(Q_i(t)\geq0\), the volume is non-increasing.  Moreover, since the
underlying smooth manifold is the fixed hyperbolic topological manifold
\(M\), the hyperbolic volume comparison theorem gives
\[
        \operatorname{Vol}_{g_i(t)}(M)
        \geq
        \operatorname{Vol}_{h}(M)
\]
for all \(t\in[0,T]\).  Therefore
\[
        \operatorname{Vol}_{h}(M)
        \leq
        \operatorname{Vol}_{g_i(t)}(M)
        \leq
        \operatorname{Vol}_{g_i(0)}(M)
        =
        \operatorname{Vol}_{h}(M)+\delta_i.
\]
Consequently,
\[
        \operatorname{Vol}_{g_i(t)}(M)\to\operatorname{Vol}_{h}(M)
\]
uniformly on \([0,T]\), and
\[
        \int_0^T\int_M Q_i(t)\,d\mu_{g_i(t)}\,dt
        =
        \operatorname{Vol}_{g_i(0)}(M)
        -
        \operatorname{Vol}_{g_i(T)}(M)
        \leq
        \delta_i.
\]
\noindent Let \(f_i(t)\) be the minimizer for Perelman's \(\lambda\)-functional,
normalized by
\[
        \int_M e^{-f_i(t)}\,d\mu_{g_i(t)}=1,
\]
and define
\[
        \mathcal S_i(t)
        :=
        \operatorname{Ric}_{g_i(t)}
        +
        \nabla^2 f_i(t)
        -
        \frac{\lambda[g_i(t)]}{3}g_i(t).
\]
By the entropy-deficit estimate and Perelman's monotonicity formula as in the proposition \ref{monotonicity},
\[
        \int_0^T
        \operatorname{Vol}_{g_i(t)}(M)^{2/3}
        \int_M
        |\mathcal S_i(t)|_{g_i(t)}^2
        e^{-f_i(t)}\,d\mu_{g_i(t)}\,dt
        \to0.
\]
Hence using propositions \ref{prop:entropy-good-time-c0-hyperbolic-borel} one may choose
$t_i\in[T/2,T]$ with $T\to\infty$
such that, with
\[
        \widehat g_i:=g_i(t_i),
        \qquad
        \widehat f_i:=f_i(t_i),
        \qquad
        \widehat\lambda_i:=\lambda[\widehat g_i],
\]
one has
\[
        \int_M
        |\widehat{\mathcal S}_i|_{\widehat g_i}^2
        e^{-\widehat f_i}\,d\mu_{\widehat g_i}
        \to0,
\]
\[
        \operatorname{Vol}_{\widehat g_i}(M)\to\operatorname{Vol}_{h}(M),
        \qquad
        \widehat\lambda_i\to -6.
\]
\noindent By Proposition \ref{prop:entropy-good-time-c0-hyperbolic-borel}, after
passing to a subsequence there exist 
$\widehat Z_i\subset M,  
        \widehat G_i:=M\setminus \widehat Z_i,
$
and maps
\[
        \widehat\Phi_i:\widehat G_i\longrightarrow M
\]
such that
$\operatorname{Vol}_{\widehat g_i}(\widehat Z_i)\to0,$
and
\[
        \sup_{y\in \widehat G_i}
        \left|
        \widehat g_i-\widehat\Phi_i^*h
        \right|_{\widehat\Phi_i^*h}(y)
        \to0.
\]
Equivalently, for some \(\varepsilon_i\to0\),
\[
        (1-\varepsilon_i)\widehat\Phi_i^*h
        \leq
        \widehat g_i
        \leq
        (1+\varepsilon_i)\widehat\Phi_i^*h
\]
as quadratic forms on \(\widehat G_i\). It remains to pull this information back to the initial slice.  Let
\[
        \Psi_i(t):G_i^0\longrightarrow G_i(t)
\]
be the spacetime tube map from Proposition
\ref{prop:flowback-c0-comparison-map}.  Thus \(G_i(t_i)\subset\widehat G_i\),
and on \(G_i^0\) one has due to the result of proposition \ref{prop:flowback-c0-comparison-map}
\[
        e^{-2\eta_i}g_i(0)
        \leq
        \Psi_i(t_i)^*\widehat g_i
        \leq
        e^{2\eta_i}g_i(0),
        \qquad
        \eta_i\to0.
\]
Moreover, if $Z_i^0:=M\setminus G_i^0$,
then
\[
        \operatorname{Vol}_{g_i(0)}(Z_i^0)\to0.
\]
Define
$\Phi_i^0:G_i^0\longrightarrow M$ by
\[
        \Phi_i^0
        :=
        \widehat\Phi_i\circ\Psi_i(t_i).
\]
Combining the two quadratic-form estimates gives
\[
        e^{-2\eta_i}(1-\varepsilon_i)(\Phi_i^0)^*h
        \leq
        g_i(0)
        \leq
        e^{2\eta_i}(1+\varepsilon_i)(\Phi_i^0)^*h
\]
on \(G_i^0\).  Hence
\[
        \sup_{x\in G_i^0}
        \left|
        g_i(0)-(\Phi_i^0)^*h
        \right|_{(\Phi_i^0)^*h}(x)
        \to0.
\]
This proves the claimed \(C^0\)-convergence on the good part.
\end{proof}

\begin{remark}
In addition, the no short-circuit (i.e., the distance between the two points on $(M,g_{i})$ is not realized by a rectifiable curve lying the bad set) condition is imposed on \(G_i^0\), then
the intrinsic length distances on \(G_i^0\) are distorted by \(o(1)\), and
the preceding \(C^0\)-comparison implies the measured Gromov--Hausdorff
statement in Theorem \ref{main1}.  Without this additional no short-circuit
assumption, the conclusion is only the tensorial \(C^0\)-convergence above.    
\end{remark}

\begin{lemma}
\label{lem:cmc-normalization}
Let \((M,g,k)\) be a smooth vacuum CMC initial data set on a closed
three-manifold.  Write
\[
        \tau:=\operatorname{tr}_g k,
        \qquad
        \sigma:=k-\frac{\tau}{3}g.
\]
Assume
\[
        \tau<0,
        \qquad
        \nabla\tau=0.
\]
Then
\[
        \operatorname{tr}_g\sigma=0,
        \qquad
        \operatorname{div}_g\sigma=0,
\]
and the Hamiltonian constraint is
\[
        R(g)+\frac{2}{3}\tau^2
        =
        |\sigma|_g^2.
\]
Define
\[
        \bar g:=\frac{\tau^2}{9}g,
        \qquad
        \bar\Sigma:=\frac{|\tau|}{3}\sigma.
\]
Then
\[
        R(\bar g)+6
        =
        |\bar\Sigma|_{\bar g}^2,
        \qquad
        \operatorname{div}_{\bar g}\bar\Sigma=0,
        \qquad
        \operatorname{tr}_{\bar g}\bar\Sigma=0.
\]
Moreover,
\[
        \operatorname{Vol}_{\bar g}(M)
        =
        \frac{(-\tau)^3}{27}\operatorname{Vol}_{g}(M).
\]
\end{lemma}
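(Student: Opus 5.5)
This is a direct computation in three stages: the momentum constraint, the Hamiltonian constraint, and then the effect of the constant conformal rescaling.

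\textbf{Momentum constraint.} Since $\sigma=k-\frac{\tau}{3}g$ and $\operatorname{tr}_g g=3$, we get $\operatorname{tr}_g\sigma=\tau-\tau=0$ immediately. The vacuum momentum constraint \eqref{eq:MC} reads $\operatorname{div}_g(k-\tau g)=0$. Substituting $k=\sigma+\frac{\tau}{3}g$ gives $\operatorname{div}_g\sigma-\frac{2}{3}\nabla\tau=0$. The CMC assumption $\nabla\tau=0$ then yields $\operatorname{div}_g\sigma=0$.

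\textbf{Hamiltonian constraint.} Because $\sigma$ is traceless, the cross term drops and
\[
|k|_g^2=|\sigma|_g^2+\tfrac{2\tau}{3}\operatorname{tr}_g\sigma+\tfrac{\tau^2}{9}\cdot 3=|\sigma|_g^2+\tfrac{\tau^2}{3}.
\]
Inserting this into \eqref{eq:HC} gives $R(g)-|\sigma|_g^2-\frac{\tau^2}{3}+\tau^2=0$, that is, $R(g)+\frac{2}{3}\tau^2=|\sigma|_g^2$.

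\textbf{Rescaling.} Set $c:=\tau^2/9$, which is a positive constant on $M$ because $\tau$ is constant. All scaling properties follow from $\bar g=cg$ with $c$ constant.
\begin{itemize}
\item The scalar curvature scales as $R(\bar g)=c^{-1}R(g)=\frac{9}{\tau^2}R(g)$.
\item For the pointwise norm, two inverse metrics contribute $c^{-2}$, so $|\bar\Sigma|_{\bar g}^2=c^{-2}\frac{\tau^2}{9}|\sigma|_g^2=\frac{9}{\tau^2}|\sigma|_g^2$.
\item Multiplying the Hamiltonian constraint by $9/\tau^2$ therefore gives $R(\bar g)+6=|\bar\Sigma|_{\bar g}^2$, since $\frac{9}{\tau^2}\cdot\frac{2}{3}\tau^2=6$.
\item The Levi-Civita connection is unchanged under constant scaling, so $\operatorname{div}_{\bar g}\bar\Sigma=c^{-1}\frac{|\tau|}{3}\operatorname{div}_g\sigma=0$.
\item Likewise $\operatorname{tr}_{\bar g}\bar\Sigma=c^{-1}\frac{|\tau|}{3}\operatorname{tr}_g\sigma=0$.
\end{itemize}

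\textbf{Volume.} In dimension three, $d\mu_{\bar g}=c^{3/2}d\mu_g$ and $c^{3/2}=|\tau|^3/27$. Since $\tau<0$ we have $|\tau|^3=(-\tau)^3$, which gives the stated volume identity.

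The only points requiring care are the sign conventions. The momentum constraint must be taken in the form $\operatorname{div}_g(k-(\operatorname{tr}_g k)g)=0$, and one must use $\tau<0$ to replace $|\tau|^3$ by $(-\tau)^3$. No step is a genuine obstacle.
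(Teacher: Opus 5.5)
Your proposal is correct and follows the paper's proof essentially step for step: the trace identity, the momentum constraint with $\nabla\tau=0$, the decomposition $|k|_g^2=|\sigma|_g^2+\tau^2/3$ inserted into the Hamiltonian constraint, and then the constant rescaling via $R(\bar g)=c^{-1}R(g)$, $|\bar\Sigma|_{\bar g}^2=c^{-1}|\sigma|_g^2$ and $d\mu_{\bar g}=c^{3/2}d\mu_g$. The only cosmetic difference is that you write the momentum constraint as $\operatorname{div}_g(k-\tau g)=0$ and extract the $-\frac{2}{3}\nabla\tau$ term explicitly, whereas the paper writes the same equation as $\nabla^j k_{ij}-\nabla_i\tau=0$.
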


\begin{proof} The proof is a straightforward scaling argument.
The vacuum constraints are
\[
        R(g)-|k|_g^2+\tau^2=0,
\]
and
\[
        \nabla^j k_{ij}-\nabla_i\tau=0.
\]
Since \(\nabla\tau=0\), the momentum constraint gives
\[
        \nabla^j\sigma_{ij}=0.
\]
Also,
\[
        \operatorname{tr}_g\sigma
        =
        \operatorname{tr}_g k-\tau=0.
\]
Since
\[
        k=\sigma+\frac{\tau}{3}g,
\]
we have
\[
        |k|_g^2
        =
        |\sigma|_g^2+\frac{\tau^2}{3}.
\]
Substituting this into the Hamiltonian constraint gives
\[
        R(g)+\frac{2}{3}\tau^2=|\sigma|_g^2.
\]
Now consider the following re-scaling
\[
        \alpha:=\frac{\tau^2}{9},
        \qquad
        \bar g=\alpha g.
\]
Since \(\alpha\) is constant,
\[
        R(\bar g)=\alpha^{-1}R(g).
\]
Moreover, for a covariant two-tensor \(T\),
\[
        |T|_{\bar g}^2=\alpha^{-2}|T|_g^2.
\]
With
\[
        \bar\Sigma=\alpha^{1/2}\sigma
        =
        \frac{|\tau|}{3}\sigma,
\]
we obtain
\[
        |\bar\Sigma|_{\bar g}^2
        =
        \alpha^{-1}|\sigma|_g^2.
\]
Therefore
\[
\begin{aligned}
        R(\bar g)+6
        &=
        \alpha^{-1}R(g)+6        =
        \alpha^{-1}
        \left(
        R(g)+6\alpha
        \right)=                 
        \alpha^{-1}
        \left(
        R(g)+\frac{2}{3}\tau^2
        \right)                 =
        \alpha^{-1}|\sigma|_g^2
        =
        |\bar\Sigma|_{\bar g}^2.
\end{aligned}
\]
The trace-free and divergence-free conditions are preserved under this
constant rescaling.  Finally, in dimension three,
\[
        d\mu_{\bar g}
        =
        \alpha^{3/2}d\mu_g
        =
        \frac{(-\tau)^3}{27}d\mu_g,
\]
because \(\tau<0\).  Hence
\[
        \operatorname{Vol}_{\bar g}(M)
        =
        \frac{(-\tau)^3}{27}\operatorname{Vol}_{g}(M).
\]
\end{proof}

\begin{lemma}
\label{lem:no-maximal-slice}
Let \((M,g,k)\) be a vacuum initial data set on a closed three-manifold of
negative Yamabe type.  If \(M\) is CMC, then
\[
        \tau:=\operatorname{tr}_g k
\]
cannot vanish identically.  In particular, an expanding CMC branch with
\(\tau<0\) cannot cross a maximal slice.
\end{lemma}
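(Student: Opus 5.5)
Suppose, for contradiction, that $\tau\equiv 0$ on a CMC slice. The plan is to read off the constraint equations on that slice, show that $g$ would then have nonnegative scalar curvature, and contradict the negative Yamabe type of $M$.

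First, with $\tau\equiv0$ the momentum constraint \eqref{eq:MC} gives $\operatorname{div}_g k=0$, and $\operatorname{tr}_g k=0$. The Hamiltonian constraint \eqref{eq:HC} then reduces to
\[
        R(g)=|k|_g^2\geq0 .
\]
So $g$ is a metric of nonnegative scalar curvature on $M$.

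Next, we use the Yamabe invariant. For a closed three-manifold of negative Yamabe type we have $\sigma(M)<0$, so every conformal class $[g]$ has negative Yamabe constant $Y([g])$. However, if $R(g)\geq0$, then testing the Yamabe quotient
\[
        \frac{\int_M\bigl(8|\nabla u|^2+R(g)u^2\bigr)d\mu_g}{\|u\|_{L^6}^2}
\]
against arbitrary positive $u$ shows every value is nonnegative, hence $Y([g])\geq0$. This is the contradiction. In the notation of the paper one can argue even more directly: $R(g)\ge 0$ forces $\lambda[g]\geq\min R\geq0$, contradicting $\lambda[g]<0$. Here one should take care to use strict negativity: Proposition \ref{monotonicity} used only $\lambda\le0$, but negative Yamabe type gives $\lambda[g]<0$ for every metric. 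Indeed, $\lambda[g]\ge0$ would make the first eigenvalue of the conformal Laplacian $-8\Delta+R$ nonnegative, hence $Y([g])\geq0$.

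This step is the only point needing care, namely the borderline case $R\geq0$ with $R\not>0$. If $R\equiv0$ were allowed with $Y=0$, then $M$ would be of zero rather than negative type. So the definition of negative Yamabe type, $\sigma(M)<0$, is exactly what excludes it. Concretely, $\lambda_1(-8\Delta_g+R(g))\ge0$ implies $Y([g])\ge0$, because the sign of the Yamabe constant equals the sign of the first conformal Laplacian eigenvalue. This contradicts $\sigma(M)<0$, so $\tau\not\equiv0$.

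For the final assertion, along a CMC foliation $\tau$ is spatially constant and continuous in time. An expanding branch with $\tau<0$ that crossed a maximal slice would therefore have to pass through $\tau=0$, that is, contain a CMC slice with $\tau\equiv0$. The first part excludes this, so the sign of $\tau$ is preserved and the branch cannot cross a maximal slice.
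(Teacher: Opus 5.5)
Your proposal is correct and follows the paper's route: setting $\tau\equiv0$ in the Hamiltonian constraint gives $R(g)=|k|_g^2\geq0$, which contradicts negative Yamabe type, and the non-crossing claim follows because $\tau$ is constant on each CMC slice (and, as you add, continuous along the branch). Your justification of the contradiction via the Yamabe quotient, or via $\lambda[g]\geq\min R$ together with $\lambda[g]\geq0\Rightarrow Y([g])\geq0$, spells out a step the paper states in one line.
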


\begin{proof}
Suppose, for contradiction, that \(\tau=0\).  The Hamiltonian constraint
then gives
\[
        R(g)=|k|_g^2\geq0.
\]
Thus \(M\) admits a smooth metric of nonnegative scalar curvature.  This
contradicts the assumption that \(M\) is of negative Yamabe type.  Therefore
\(\tau\neq0\).  Since \(\tau\) is constant on each CMC slice, an expanding
branch with \(\tau<0\) cannot cross \(\tau=0\).
\end{proof}

\begin{proof}[Proof of Theorem \ref{main2}]
Let $(M,g_i,k_i)$ be a sequence of smooth vacuum CMC initial data sets on a fixed closed
hyperbolic three-manifold \(M\).  Write
\[
        \tau_i:=\operatorname{tr}_{g_i}k_i<0,
        \qquad
        \sigma_i:=k_i-\frac{\tau_i}{3}g_i.
\]
Since the data are CMC, \(\tau_i\) is constant on \(M\).  The vacuum
constraints give
\[
        R(g_i)-|k_i|_{g_i}^2+\tau_i^2=0,
        \qquad
        \operatorname{div}_{g_i}\sigma_i=0.
\]
By Lemma \ref{lem:cmc-normalization}, define
\[
        \bar g_i:=\frac{\tau_i^2}{9}g_i,
        \qquad
        \bar\Sigma_i:=\frac{|\tau_i|}{3}\sigma_i.
\]
Then
\[
        R(\bar g_i)+6
        =
        |\bar\Sigma_i|_{\bar g_i}^2\geq0,
\]
and
\[
        \operatorname{div}_{\bar g_i}\bar\Sigma_i=0,
        \qquad
        \operatorname{tr}_{\bar g_i}\bar\Sigma_i=0.
\]
In particular,
$R(\bar g_i)\geq -6$.
With the convention
\[
        \mathcal H(g_i,k_i)
        :=
        (-\tau_i)^3\operatorname{Vol}_{g_i}(M),
\]
Lemma \ref{lem:cmc-normalization} gives
\[
        \mathcal H(g_i,k_i)
        =
        27\operatorname{Vol}_{\bar g_i}(M).
\]
For the Lorentz-cone data over \((M,h)\), the normalized spatial metric is
\(h\) and the transverse-traceless part vanishes.  Hence the Lorentz-cone
Hamiltonian value is
\[
        \mathcal H_{\mathrm{LC}}
        =
        27\operatorname{Vol}_h(M).
\]
The assumption
\[
        \mathcal H(g_i,k_i)\to \mathcal H_{\mathrm{LC}}
\]
is therefore equivalent to
$\operatorname{Vol}_{\bar g_i}(M)\to \operatorname{Vol}_h(M)$.

\noindent Thus the sequence \((M,\bar g_i)\) satisfies precisely the hypotheses of
Theorem \ref{main1}.  Applying Theorem \ref{main1} to \(\bar g_i\), we
obtain 
\[
        Z_i\subset M,
        G_i:=M\setminus Z_i,
\]
and maps
\[
        \Phi_i:G_i\longrightarrow M
\]
such that
$\operatorname{Vol}_{\bar g_i}(Z_i)\to0, 
$
and
\[
        \sup_{x\in G_i}
        \left|
        \bar g_i-\Phi_i^*h
        \right|_{\Phi_i^*h}(x)
        \to0.
\]
Equivalently, for every \(\varepsilon>0\), for all sufficiently large \(i\),
\[
        (1-\varepsilon)\Phi_i^*h
        \leq
        \bar g_i
        \leq
        (1+\varepsilon)\Phi_i^*h
\]
on \(G_i\).
This proves the reduced-Hamiltonian stability theorem.
\end{proof}

\end{document}